\newtheorem{theorem}{Theorem}
\newtheorem{corollary}[theorem]{Corollary}
\newtheorem{definition}[theorem]{Definition}
\newtheorem{example}[theorem]{Example}
\newtheorem{proposition}[theorem]{Proposition}
\newtheorem{remark}[theorem]{Remark}
\newenvironment{proof}[1][Proof]{\textbf{#1.} }{\ \rule{0.5em}{0.5em}}
\def\essinf{\operatorname{ess.\!inf}}
\def\esssup{\operatorname{ess.\!sup}}
\def\essmax{\operatorname{ess.\!max}}
\begin{document}

\title{Fully-dynamic risk measures: horizon risk, time-consistency, and relations with BSDEs and BSVIEs}
\author{Giulia Di Nunno\thanks{Department of Mathematics,
University of Oslo, P.O. Box 1053 Blindern, N-0316 Oslo Norway.
Email: giulian@math.uio.no}
\thanks{NHH - Norwegian School of Economics, Helleveien 30, N-5045 Bergen, Norway.}
  \and Emanuela Rosazza Gianin\thanks{Department of Statistics and Quantitative Methods,
University of Milano Bicocca, via Bicocca degli Arcimboldi 8, 20126 Milano Italy.
Email:  emanuela.rosazza1@unimib.it}}
\date{November 15, 2023}

\maketitle
\begin{abstract}
In a dynamic framework, we identify a new concept associated with the risk of assessing the financial exposure by a measure that is not adequate to the actual time horizon of the position. This will be called {\it horizon risk}. We clarify that {\it dynamic risk measures} are subject to horizon risk, so we propose to use the {\it fully-dynamic} version. To quantify horizon risk, we introduce {\it h-longevity} as an indicator.
We investigate these notions together with other properties of risk measures as normalization, restriction property, and different formulations of time-consistency. We also consider these concepts for fully-dynamic risk measures generated by backward stochastic differential equations (BSDEs), backward stochastic Volterra integral equations (BSVIEs), and families of these. Within this study, we provide new results for BSVIEs such as a converse comparison theorem and the dual representation of the associated risk measures.

\vspace{2mm}\noindent
\textbf{Keywords:} Fully-dynamic risk measures, time-consistency, BSDEs, BSVIEs, converse comparison theorem for BSVIEs, dual representation, horizon risk, h-longevity

\vspace{2mm}\noindent
\textbf{MSC2020:} 60H10, 60H20, 91B70, 91G70
\end{abstract}

\section{Introduction}

Monetary risk assessment spans across time horizons with different length, from the very short ones for trading operations to the decades-long ones typical of sovereign wealth or pension funds.
Hence the use of adequate risk measures across time as well as a time-consistent evaluation of risk are important factors in risk quantification and management.

We address the problem of using an appropriate risk evaluation for the given time horizon. To explain, it is not correct to use a risk measure designed for long term positions to evaluate risks that occur in the short term.
This is considered a ``common error'' in risk quantification by some quants, see e.g.~\cite{DDB}.
Indeed, bearing in mind that the purpose of risk assessment and management is not avoiding risk, but giving the rightful possibility to invest in risky assets with a reasonable control on what is acceptable to the investor, then it is clear that this form of ``horizon risk'' should be tackled.
This is particularly evident in life insurance with an impact on pension funds and health insurance. For example, a wrong estimate of the individual remaining lifetime brings to a wrong use of the associated mortality tables and, consequently, of the risk of an underestimation of insurance premia and, by domino effect, a wrong estimation of capital requirements and their associated financial management.
In
this work, we identify {\it horizon risk} and we propose a way to quantify this via the {\it horizon longevity},
or {\it h-longevity} in short.
For this we work with the general framework of fully-dynamic risk measures that embeds the classical  dynamic risk measures. Indeed, {\it a fully-dynamic risk measure} is a family of risk measures $(\rho_{su})_{s,u} \triangleq (\rho_{su})_{0\leq s \leq u\leq T}$  $(\text{with } T<\infty)$ indexed by two time parameters, the first represents the evaluation time, the second the horizon to which the risk pertains. Instead, {\it a dynamic risk measure} is a family of risk measures $(\rho_{s})_{s} \triangleq (\rho_{s})_{0\leq s  \leq T}$ indexed by only one time parameter representing the time of risk evaluation.
So we have that
\begin{equation}
\label{eq:intro1}
\rho_s = \rho_{sT}.
\end{equation}
Whenever we consider a risk $X$ occurring at time $t\in [0,T]$ (hence $\mathcal{F}_t$-measurable for some given information flow), we can consider different risk evaluations at $s$, either $\rho_{st}(X)$ or $\rho_{su}(X)$, for any other horizon $u>t$ (even much later than $t$), since $\mathcal{F}_t \subseteq \mathcal{F}_u$.
Horizon longevity penalizes the use of the ``wrong'' risk measure, that is the one that does not pertain to the right time horizon.

The impact of the length of the time horizon on risk evaluation has already been detected in the work~\cite{FKN} on epistemic uncertainty. There, it comes out that ambiguity on the model choice is growing with the time horizon. While~\cite{FKN} is not related with our considerations, it shows anyhow that the time horizon effects the precision on risk quantification.

Another concept under investigation in this paper is {\it time-consistency}, which has different formulations and it plays an important role in dynamic risk evaluation. See e.g.~\cite{acciaio-penner,bielecki-etal,bion-nadal-SPA,bion-nadal-di-nunno}.

In our study we highlight that both h-longevity and time-consistency are linked to the {\it restriction property} (see~\cite{bion-nadal-di-nunno}), that is
\begin{equation}
\label{restriction}
\rho_{st}(X) = \rho_{su}(X), \qquad \text{for} \;\mathcal{F}_t\text{-measurable} \; X, \quad u \geq t.
\end{equation}
Indeed, the presence of restriction induces that the different formulations of time-consistency are equivalent, while the absence of restriction provides the very possibility to introduce h-longevity.
The restriction property is naturally satisfied by dynamic risk measures $(\rho_s)_s$ (see~\eqref{eq:intro1}) and this may be a reason for which the horizon risk has not been earlier identified in its own being and hence quantified. Similarly, also {\it normalization}, i.e.
\begin{equation}
\label{normalization}
\rho_{su}(0) = 0,
\end{equation}
is discovered to play a crucial role in the relationships among the concepts above. Note that normalization is often standardly assumed in many financial risk evaluations. However, it is questionable in the evaluation of environmental strategies, namely a non-action (zero investment) can carry a positive risk. Note that, by ESG regulations, these positions and their entailed risks have to be reported.

In the first part of this paper, we work in full generality with an axiomatic setting. In the second part we concentrate on risk measures induced by backward dynamics.
We focus on the Brownian framework (see, e.g.,~\cite{barrieu-el-karoui, jiang, peng05, rg}) and use a BSDE with driver $g$ to generate a fully-dynamic risk measure
$(\rho_{su})_{s,u}$. Then we study how the properties of the driver are connected to the concepts of time-consistency, h-longevity, restriction, and normalization.

Since a fully-dynamic risk measure $(\rho_{su})_{s,u}$ depends on the horizons $u \in [0,T]$, then we can also generate it from a {\it family} of BSDEs with drivers $\mathcal{G}= (g_u)_u$. In this way it is emphasized that each period $[s,u]$ is associated to the BSDE with driver $g_u$ providing the risk evaluation $\rho_{su}$.
In this framework we investigate again the relationships among the concepts of interest with the family of drivers. In this context we prove a Comparison Theorem for BSDEs with different time horizons useful for the study of h-longevity.

Recently, backward stochastic Volterra integral equations (BSVIEs) have been suggested to generate dynamic risk measures, see~\cite{yong07, agram}. However, at that stage, some important results on BSVIEs were still lacking. Here we provide a dual representation of risk measures induced by BSVIEs and a converse comparison theorem for BSVIEs. With these two results we have a full picture of the relationship between the properties of the driver and those of the corresponding risk measures as well as a comprehensive study of time-consistency and horizon longevity in the Volterra setting.

We recall that a BSVIE is induced by a family of BSDEs, see~\cite{yong13-survey}. This has been a further motivation to study fully-dynamic risk measures induced by a BSVIE with Volterra driver $g=g(t,s,\cdot)$, $s\geq t$. As a final stage we have also considered {\it families} of BSVIEs with Volterra drivers $\mathcal{G} = (g_u)_u$  again to emphasize the role of the time horizon in the risk evaluation.

Working with families of backward equations has made evident the surprisingly crucial role of the restriction property in risk measurement across time. For example, it turns out that the only way to have a strong (recursive) time-consistency is to work with fully-dynamic risk measures induced by a single standard BSDE.

Summarizing, in this work we have first identified horizon risk in its own being and proposed one way to quantify it. We have hence considered fully-dynamic risk measures and studied  the different forms of time-consistency and h-longevity, showing how the restriction property is actually playing a crucial role in these matters. Our work has covered both the general axiomatic approach and the risk measures generated by backward dynamics, both single and in families, both of standard and of Volterra type.
In this we have seen how the properties of the drivers reflect the properties of the fully-dynamic risk measures in respect of the risk evaluations across time. In between we have also provided results on BSDE and BSVIEs of independent interest.


\section{Fully-dynamic risk measures, time-consistency, and horizon risk}

In the sequel, we will focus on fully-dynamic risk measures that have been recently introduced by Bion-Nadal and Di Nunno~\cite{bion-nadal-di-nunno}. See also~\cite{BN-BMO}, where this concept was actually simply called dynamic risk measure.

\begin{definition}
A \emph{fully-dynamic risk measure} is a family $(\rho_{st})_{s,t}=(\rho_{st})_{0 \leq s \leq t \leq T}$ of risk measures indexed by two time parameters
$$\rho_{st}: L^{p}(\mathcal{F}_t) \to L^{p}(\mathcal{F}_s), \mbox{ with } p \in [1,+\infty],$$
that for all $X,Y \in L^{p}(\mathcal{F}_t)$ are:
\begin{itemize}
\item[-] monotone: $\rho_{st}(X) \leq \rho_{st}(Y)$ for $X \geq Y$,
\item[-] convex: $\rho_{st}(\lambda X + (1-\lambda) Y) \leq \lambda\rho_{st}(X)+(1-\lambda) \rho_{st}(Y)$ for all $\lambda \in [0,1]$,
\item[-] $\mathcal{F}_s$-translation invariant (cash-invariant): $\rho_{st}( X +  Y)=\rho_{st}(X)-Y$ for all $Y \in L^{p}(\mathcal{F}_s)$
\end{itemize}
and, for $p=\infty$,
\begin{itemize}
\item[-] continuous from below: if $X_n \uparrow X$ as $n \to + \infty$, then $\rho_{st}(X_n) \to \rho_{st} (X)$ in $L^{p}(\mathcal{F}_s)$, as $n \to + \infty$.
\end{itemize}
\end{definition}
In the definition above, continuity from below is assumed only in the case where $p=\infty$.
In fact, for any $p\in [1, \infty)$, it is implied by the other assumptions, see~\cite[Remark 2.5]{bion-nadal-di-nunno}. Furthermore, fully-dynamic risk measures satisfy {\it weak $\mathcal{F}_s$-homogeneity}, i.e.
$$
1_A \rho_{st}(X)= 1_A \rho_{st}(1_A X), \quad \mbox{ for any } X \in L^{p}(\mathcal{F}_t), A \in \mathcal{F}_s
$$
(see~\cite[Remark 2.6]{bion-nadal-di-nunno}) and have the following dual representations
\begin{eqnarray}
\rho_{st}(X)&=& \underset{Q \in \mathcal{Q}_{st}}{\essmax} \;
 \{E_Q [-X | \mathcal{F}_s] - \alpha_{st}(Q) \}
 \label{eq: dual-repres-fully-dyn} \\
&=& \underset{\substack{Q \ll P : \\  E_P [\alpha_{st}(Q)] < \infty }}{\essmax}
\{E_Q [-X | \mathcal{F}_s] - \alpha_{st}(Q) \}
 \label{eq: dual-repres-fully-dyn-2}
\end{eqnarray}
where
\begin{equation}
\label{eq: Q_tu - gt}
\mathcal{Q}_{st} \triangleq \left\{Q \mbox{ on } \mathcal{F}_t: Q \ll P \mbox{ and } Q|_{\mathcal{F}_s} \equiv P|_{\mathcal{F}_s} \right\}
\end{equation}
and
\begin{equation}
\label{minimal penalty}
\alpha_{st}(Q)\triangleq \underset{X\in L^p(\mathcal{F}_t)}{\esssup} \;
 \{E_Q [-X | \mathcal{F}_s] - \rho_{st}(X) \}
\end{equation}
is the {\it minimal penalty functional}. See~\cite[Proposition 2.8]{bion-nadal-di-nunno} for details.

We stress that, in general, the risk measures in the family of the fully dynamic risk measure are not normalized, i.e. $\rho_{st}(0) \neq 0$, and the family does not satisfy the restriction property, that is $\rho_{rt}(Y) \neq \rho_{rs}(Y)$ for $Y \in L^{p}(\mathcal{F}_s)$, for $s \leq t$.
\bigskip

In view of the dynamic nature of the risk evaluation, we consider the relationships among the inter-temporal evaluations given by the fully-dynamic risk measure.
For this, we recall here below the notions of time-consistency mostly used in the literature and their connections expressed in the present setting. See, e.g.,~\cite{acciaio-penner, bielecki-etal, bion-nadal-SPA}, among others.

\begin{definition}
\hspace{10cm}
\begin{itemize}
  \item \emph{Strong time-consistency} (or \emph{recursivity}): for any $s, t, u \in [0,T]$ with $s \leq t \leq u$,
  \begin{equation}
  \label{strong tc}
  \rho_{st} (-\rho_{tu}(X))= \rho_{su}(X) \quad \mbox{ for any } X \in L^{p}(\mathcal{F}_u).
  \end{equation}
  \item \emph{Order time-consistency}: for any $s, t, u \in [0,T]$ with $s \leq t \leq u$,
  \begin{equation}
    \label{order tc}
  \rho_{tu}(X)=\rho_{tu}(Y), \quad X,Y \in L^p(\mathcal{F}_u) \Longrightarrow \rho_{su}(X)=\rho_{su}(Y) .
  \end{equation}
  \item \emph{Weak time-consistency}: for any $s, t, u \in [0,T]$ with $s \leq t \leq u$,
  \begin{equation}
  \label{eq: weak-tc}
  \rho_{su} (\rho_{tu}(0)-\rho_{tu}(X))= \rho_{su}(X) \quad \mbox{ for any } X \in L^{p}(\mathcal{F}_u).
  \end{equation}
  \end{itemize}
\end{definition}

\noindent Observe that strong time-consistency is equivalent to the {\it cocycle condition} on the minimal penalties:
\begin{equation} \label{eq: cocycle}
\alpha_{su} (Q) = \alpha_{st}(Q|_{\mathcal{F}_t})+ E_{Q} [\alpha_{tu}(Q)|\mathcal{F}_s ], \quad \forall s,t,u, Q \in \mathcal{Q}_{su},
\end{equation}
together with the {\it m-stability} of the Radon-Nykodym derivatives associated to the corresponding sets of measures $(\mathcal{Q}_{st})_{s,t}$, i.e. the probability measure $S$ defined by
\begin{equation}
\label{pasting}
\frac{dS}{dP} = \frac{dQ}{dP} \, \frac{dR}{dP}, \quad \textrm{for all } Q\in \mathcal{Q}_{st}, R \in \mathcal{Q}_{tu}
\end{equation}
 belongs to $\mathcal{Q}_{su}$.
See~\cite[Theorem 2.5]{BN-BMO}.
The operation in~\eqref{pasting} is called {\it pasting} of probability measures, also shortly denoted $Q \cdot R$.
Naturally, any measure $S \in \mathcal{Q}_{su}$ admits a representation in the form~\eqref{pasting}. In fact, we have
$$
\frac{d S}{dP} = \frac{dS|_{\mathcal{F}_t}}{dP} \, \frac{dR}{dP} \quad \textrm{with }  \frac{dR}{dP} = \frac{ \frac{dS}{dP} }{E_P \big[\frac{dS}{dP}\vert \mathcal{F}_t \big] } 1_A + 1_{A^c}
 $$
where $A\triangleq \big\{ \omega\in \Omega : E_P \big[\frac{dS}{dP}\vert \mathcal{F}_t\big](\omega) > 0 \big\}$.

We recall from Proposition 2.13 and Corollary 2.14 of Bion-Nadal and Di Nunno~\cite{bion-nadal-di-nunno} that, for fully-dynamic risk measures, strong time-consistency implies order time-consistency; furthermore, for {\it normalized} fully-dynamic risk measures strong time-consistency is equivalent to the restriction property plus order time-consistency.

It is worth emphasizing that, differently from strong time-consistency, the formulation of weak time-consistency is in terms of risk measures with the same time horizon $u$.
The notion of weak time-consistency is motivated by the discussion done by Bion-Nadal and Di Nunno~\cite{bion-nadal-di-nunno}, where it was shown that risk indifference pricing does not satisfy strong time-consistency, but only a weaker version.

Notice that under the additional assumption of normalization~\eqref{normalization}, weak time-consistency~\eqref{eq: weak-tc} becomes
\begin{equation} \label{eq: weak-tc-normalized}
  \rho_{su} (-\rho_{tu}(X))= \rho_{su}(X) \quad \mbox{ for any } X \in L^{p}(\mathcal{F}_u),
  \end{equation}
while, under both normalization and the restriction property, weak time-consistency~\eqref{eq: weak-tc} reduces to the classical strong time-consistency~\eqref{strong tc}.

\bigskip
Again from~\cite{bion-nadal-di-nunno} and from the arguments above, it emerges that the restriction property has not only an extremely important role on time-consistency but also on risk measures. Indeed, once the restriction property is dropped, we can explicitly quantify
the increasing cost of riskiness for longer time horizons.

\begin{definition}\label{def: horizon longevity}
A fully-dynamic risk measures $(\rho_{st})_{s,t}=(\rho_{st})_{0 \leq s \leq t \leq T}$ is said to take into account \emph{Horizon longevity} (or \emph{h-longevity}, for short) if for any fixed $s \in [0,T]$,
  \begin{equation} \label{eq: time-value}
  \rho_{st} (X) \leq \rho_{su}(X) \quad \mbox{ for any }  0 \leq t \leq u, \; X \in L^{p}(\mathcal{F}_t) .
  \end{equation}
  Naturally, a risk measure satisfying restriction property (see~\eqref{restriction}) trivially satisfies h-longevity. Then we talk about \emph{strict h-longevity} to exclude this trivial case.
\end{definition}

Differently from weak time-consistency~\eqref{eq: weak-tc} that has an impact on the first time parameter, horizon longevity~\eqref{eq: time-value} focuses on the behavior of the second time parameter, that is the time horizon. Quantifying horizon risk is particularly important in life insurance and pension funds, as reported in the introduction of this work. 
\medskip

Our aim is to investigate weak time-consistency and h-longevity on fully-dynamic risk measures $(\rho_{st})_{0 \leq s \leq t \leq T}$ from an axiomatic point of view as well as their impact on the relation between fully-dynamic risk measures and BSDEs or BSVIEs.

\subsection{Time-consistency}

The next results focus on weak time-consistency and on its characterizations.
Also we introduce the new concept of {\it sub time-consistency}, see Proposition~\ref{prop: charact-penalty-weak-tc}, which will turn out to interplay with h-longevity.

\begin{proposition}
\label{prop: equiv-weak-tc}
A fully-dynamic risk measure $(\rho_{st})_{0 \leq s \leq t \leq T}$ is weakly time-consistent if and only if it satisfies order time-consistency.
\end{proposition}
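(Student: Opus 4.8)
The plan is to prove both implications directly from the axioms, with cash-invariance serving as the one structural tool that does all the work. First I would dispatch the easy direction, that weak time-consistency implies order time-consistency. Fix $s \leq t \leq u$ and suppose $\rho_{tu}(X)=\rho_{tu}(Y)$ for some $X,Y \in L^p(\mathcal{F}_u)$. Writing out~\eqref{eq: weak-tc} for $X$ and for $Y$ separately, I observe that the two arguments $\rho_{tu}(0)-\rho_{tu}(X)$ and $\rho_{tu}(0)-\rho_{tu}(Y)$ coincide by hypothesis, so the left-hand sides of the two instances of~\eqref{eq: weak-tc} are literally the same random variable; reading off the right-hand sides then gives $\rho_{su}(X)=\rho_{su}(Y)$, which is exactly~\eqref{order tc}.

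For the converse I would assume order time-consistency and fix $X \in L^p(\mathcal{F}_u)$, aiming to establish~\eqref{eq: weak-tc}. Set $Z \triangleq \rho_{tu}(0)-\rho_{tu}(X)$; since $\rho_{tu}$ maps into $L^p(\mathcal{F}_t)$, the variable $Z$ is $\mathcal{F}_t$-measurable and hence lies in $L^p(\mathcal{F}_u)$. The key computation is to evaluate $\rho_{tu}(Z)$ using $\mathcal{F}_t$-cash-invariance of $\rho_{tu}$: because $Z \in L^p(\mathcal{F}_t)$, one has $\rho_{tu}(Z)=\rho_{tu}(0+Z)=\rho_{tu}(0)-Z=\rho_{tu}(X)$. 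Thus $\rho_{tu}(Z)=\rho_{tu}(X)$, and applying order time-consistency~\eqref{order tc} to the pair $(Z,X)$ yields $\rho_{su}(Z)=\rho_{su}(X)$, which is precisely the weak time-consistency identity~\eqref{eq: weak-tc}.

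There is essentially no analytic obstacle here: both directions hinge on the single algebraic fact that the ``inner correction'' $\rho_{tu}(0)-\rho_{tu}(X)$ is always $\rho_{tu}$-equivalent to $X$, i.e. $\rho_{tu}\bigl(\rho_{tu}(0)-\rho_{tu}(X)\bigr)=\rho_{tu}(X)$. The only points demanding care are the bookkeeping of the sign convention in cash-invariance, namely $\rho_{st}(X+Y)=\rho_{st}(X)-Y$ for $\mathcal{F}_s$-measurable $Y$, and the verification that $Z$ is $\mathcal{F}_t$-measurable so that order time-consistency is applicable; neither monotonicity, convexity, nor continuity from below is needed.
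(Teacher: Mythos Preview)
Your proof is correct and follows essentially the same approach as the paper's own argument: both directions rest on the single cash-invariance identity $\rho_{tu}\bigl(\rho_{tu}(0)-\rho_{tu}(X)\bigr)=\rho_{tu}(X)$, and you have handled the measurability bookkeeping for $Z$ exactly as required. The only difference is cosmetic---you treat the implication weak $\Rightarrow$ order first, whereas the paper starts with order $\Rightarrow$ weak---but the logical content is identical.
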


\noindent
\begin{proof}
Assume order time-consistency. Let $X \in L^p(\mathcal{F}_u)$ and $t \leq u$ be arbitrary. By $\mathcal{F}_t$-translation invariance of $\rho_{tu}$ it follows
\begin{equation} \label{eq: eq000}
\rho_{tu}(X)=\rho_{tu}(0-\rho_{tu}(X))-\rho_{tu}(0)= \rho_{tu}(\rho_{tu}(0)-\rho_{tu}(X)).
\end{equation}
Since $Y \triangleq \rho_{tu}(0)-\rho_{tu}(X) \in L^p(\mathcal{F}_t)$,~\eqref{eq: eq000} and order time-consistency imply that $\rho_{su}(X)=\rho_{su}(Y)$ for any $s \leq t$. The thesis is therefore proved.

Conversely, suppose that $\rho_{tu}(X)=\rho_{tu}(Y)$ holds for some $X,Y \in L^p(\mathcal{F}_u)$ and $t \leq u$. By weak time-consistency it follows that
$$
\rho_{su}(X)= \rho_{su} (\rho_{tu}(0)-\rho_{tu}(X))=\rho_{su} (\rho_{tu}(0)-\rho_{tu}(Y))=\rho_{su}(Y)
$$
for any $s \leq t$.
\end{proof}

\noindent
Naturally, since strong time-consistency implies order time-consistency, the result above gives that strong time-consistency implies also weak time-consistency.

\begin{proposition}
\label{prop: charact-penalty-weak-tc}
Let $(\alpha_{st})_{0 \leq s \leq t \leq T}$ be the minimal penalty terms of $(\rho_{st})_{0 \leq s \leq t \leq T}$.\smallskip

\noindent a)
If $(\alpha_{ru})_{0 \leq r \leq u \leq T}$ satisfies, for all $s\leq t\leq u$,
\begin{equation}
\label{eq: characteriz-weak-tc-penalty}
\alpha_{su} (S)= \alpha_{su}(Q)+ E_{Q} [\alpha_{tu}(R) - \essinf_{\bar{R} \in \mathcal{Q}_{tu}} \alpha_{tu} (\bar{R}) |\mathcal{F}_s ],
\end{equation}
 for all $Q \in \mathcal{Q}_{su}, R \in {Q}_{tu},$
where $S=Q|_{\mathcal{F}_t} \cdot R \in \mathcal{Q}_{su}$ is obtained by pasting $Q$ on $[s,t]$ and $R$ on $[t,u]$, then $(\rho_{st})_{0 \leq s \leq t \leq T}$ is weakly time-consistent.\smallskip

\noindent b)
Weak time-consistency of $(\rho_{st})_{0 \leq s \leq t \leq T}$ implies that, for all $s\leq t\leq u$,
\begin{equation} \label{eq: characteriz-weak-tc-penalty-2}
\alpha_{su} (S) \leq \alpha_{su}(Q)+ E_{Q} [\alpha_{tu}(R) - \essinf_{\bar{R} \in \mathcal{Q}_{tu}} \alpha_{tu} (\bar{R}) |\mathcal{F}_s ],
\end{equation}
for all $Q \in \mathcal{Q}_{su}, R \in {Q}_{tu},$
where $S=Q|_{\mathcal{F}_t} \cdot R \in \mathcal{Q}_{su}$ is obtained by pasting $Q$ on $[s,t]$ and $R$ on $[t,u]$. In~\eqref{eq: characteriz-weak-tc-penalty-2} equality holds at least for the optimal scenarios in the dual representation~\eqref{eq: dual-repres-fully-dyn}-\eqref{eq: dual-repres-fully-dyn-2}.
\end{proposition}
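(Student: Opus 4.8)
The plan is to reduce both statements to a single dual-representation computation for the composed functional
\[
\Phi_{su}(X)\triangleq \rho_{su}\big(\rho_{tu}(0)-\rho_{tu}(X)\big),\qquad X\in L^p(\mathcal{F}_u),
\]
and to exploit that, by Proposition~\ref{prop: equiv-weak-tc}, weak time-consistency is precisely the identity $\Phi_{su}=\rho_{su}$ for all $s\leq t\leq u$. As a preliminary step I would read off from~\eqref{eq: dual-repres-fully-dyn} that $\rho_{tu}(0)=-\essinf_{\bar R\in\mathcal{Q}_{tu}}\alpha_{tu}(\bar R)$, so that the normalized functional $\hat\rho_{tu}(X)\triangleq\rho_{tu}(X)-\rho_{tu}(0)$ has dual representation with minimal penalty $\beta_{tu}(R)=\alpha_{tu}(R)-\essinf_{\bar R\in\mathcal{Q}_{tu}}\alpha_{tu}(\bar R)$. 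Since $\rho_{tu}(0)-\rho_{tu}(X)=-\hat\rho_{tu}(X)$, the object to analyse is $\Phi_{su}(X)=\rho_{su}\big(-\hat\rho_{tu}(X)\big)$, which is of the cocycle type behind~\eqref{eq: cocycle}.

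Next I would carry out the key computation. Fix $Q\in\mathcal{Q}_{su}$ and $R\in\mathcal{Q}_{tu}$ and let $S=Q|_{\mathcal{F}_t}\cdot R$ be the pasting~\eqref{pasting}. Because $\hat\rho_{tu}(X)$ and $\beta_{tu}(R)$ are $\mathcal{F}_t$-measurable, the pasting identity yields $E_Q\big[E_R[-X|\mathcal{F}_t]\,\big|\,\mathcal{F}_s\big]=E_S[-X|\mathcal{F}_s]$ and $E_Q[\beta_{tu}(R)|\mathcal{F}_s]=E_{Q|_{\mathcal{F}_t}}[\beta_{tu}(R)|\mathcal{F}_s]$. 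Bounding the outer essential maximum over $\mathcal{Q}_{su}$ and the inner essential maximum over $\mathcal{Q}_{tu}$ each by the single scenarios $Q,R$ gives at once the ``free'' lower bound, valid without any interchange,
\[
\Phi_{su}(X)\;\geq\; E_S[-X|\mathcal{F}_s]\;-\;\Big(\alpha_{su}(Q)+E_Q\big[\alpha_{tu}(R)-\essinf_{\bar R\in\mathcal{Q}_{tu}}\alpha_{tu}(\bar R)\,\big|\,\mathcal{F}_s\big]\Big),
\]
where the bracketed term is exactly the right-hand side of~\eqref{eq: characteriz-weak-tc-penalty}. Conversely, interchanging $E_Q[\,\cdot\,|\mathcal{F}_s]$ with the essential supremum over $R$ (the family $\{E_R[-X|\mathcal{F}_t]-\beta_{tu}(R)\}_{R\in\mathcal{Q}_{tu}}$ being upward directed thanks to the stability of $\mathcal{Q}_{tu}$ under pasting) upgrades this to the exact identity $\Phi_{su}(X)=\essmax_{Q,R}\{E_S[-X|\mathcal{F}_s]-(\cdots)\}$, in which $S=Q|_{\mathcal{F}_t}\cdot R$ ranges over all of $\mathcal{Q}_{su}$.

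For part a) I would assume~\eqref{eq: characteriz-weak-tc-penalty}, so that every bracketed term equals $\alpha_{su}(S)$; substituting into the exact identity and using that each $S\in\mathcal{Q}_{su}$ is of pasting form collapses the representation to $\Phi_{su}(X)=\essmax_{S\in\mathcal{Q}_{su}}\{E_S[-X|\mathcal{F}_s]-\alpha_{su}(S)\}=\rho_{su}(X)$, i.e. $\Phi_{su}=\rho_{su}$, which is weak time-consistency. For part b) I would assume weak time-consistency, hence $\Phi_{su}=\rho_{su}$; the free lower bound then reads $\rho_{su}(X)\geq E_S[-X|\mathcal{F}_s]-(\cdots)$ for every $X\in L^p(\mathcal{F}_u)$, and taking the essential supremum over such $X$ in the definition~\eqref{minimal penalty} of $\alpha_{su}(S)$ produces exactly the inequality~\eqref{eq: characteriz-weak-tc-penalty-2}. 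Finally, for the scenarios $(Q^\ast,R^\ast)$ attaining the essential maximum in the exact identity for $\Phi_{su}$ the single-scenario estimates used above are saturated, so the inequality becomes an equality for these optimal scenarios.

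The hard part will be the interchange of the conditional expectation with the essential supremum over the measure families, equivalently the attainment of the essential maximum by optimal scenarios; this is where the upward-directedness of the families, rooted in the pasting stability~\eqref{pasting} of $\mathcal{Q}_{su}$ and $\mathcal{Q}_{tu}$, is genuinely used, while everything else is bookkeeping with the dual representation and the pasting identity. It is worth noting that the inequality in b) and the necessity direction do \emph{not} require this interchange — only the sufficiency in a) and the equality at the optimal scenarios do — which is consistent with the asymmetric (equality versus inequality) formulation of the two parts.
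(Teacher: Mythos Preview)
Your proposal is correct and follows essentially the same route as the paper: reduce to the normalized risk measure (your $\hat\rho_{tu}$, $\beta_{tu}$ are the paper's $\widetilde{\rho}_{tu}$, $\widetilde{\alpha}_{tu}$), expand $\rho_{su}\big(-\hat\rho_{tu}(X)\big)$ via the dual representation, interchange the conditional expectation with the inner essential supremum using upward-directedness, and use pasting to identify the resulting double family with $\mathcal{Q}_{su}$; for b) both you and the paper derive the inequality from minimality of $\alpha_{su}$ and the equality at optimizers from attainment of the $\essmax$. Your explicit separation of the ``free lower bound'' (which yields~\eqref{eq: characteriz-weak-tc-penalty-2} without any interchange) from the exact identity is a slightly cleaner bookkeeping than the paper's presentation, but the substance is the same.
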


It is worth emphasizing that~\eqref{eq: characteriz-weak-tc-penalty} is different from the usual cocycle condition~\eqref{eq: cocycle} for two reasons. First, it depends on the additional term $\essinf_{\bar{R} \in \mathcal{Q}_{tu}} \alpha_{tu} (\bar{R})$, which is due to the non-normalization of the risk measure. Second, both $\alpha_{su} (S)$ and $\alpha_{su} (Q)$ refer to the time horizon $u$.
Note that $\essinf_{\bar{R} \in \mathcal{Q}_{tu}} \alpha_{tu} (\bar{R}) \in L^p(\mathcal{F}_t)$ because of $\rho_{tu}(0)=-\essinf_{\bar{R} \in \mathcal{Q}_{tu}} \alpha_{tu} (\bar{R})$ and, by assumption, $\rho_{tu}(0) \in L^p(\mathcal{F}_t)$.

\smallskip
In terms of the {\it normalized risk measure}
$$
\widetilde{\rho}_{st} (X) \triangleq \rho_{st} (X) - \rho_{st} (0)
$$
and of its minimal penalty function
$$
\widetilde{\alpha}_{st} (Q) \triangleq \alpha_{st} (Q) - \essinf _{\bar{R} \in \mathcal{Q}_{st}} \alpha_{st} (\bar{R}),
$$
condition~\eqref{eq: characteriz-weak-tc-penalty} becomes
\begin{equation} \label{eq: characteriz-weak-tc-penalty-tilde}
\widetilde{\alpha}_{su} (S)= \widetilde{\alpha}_{su}(Q)+ E_{Q} [\widetilde{\alpha}_{tu}(R) |\mathcal{F}_s ], \quad \forall s,t,u, Q \in \mathcal{Q}_{su}, R \in {Q}_{tu}.
\end{equation}
Notice also that while weak time-consistency of a fully-dynamic risk measure $(\rho_{st})_{0 \leq s \leq t \leq T}$ is equivalent to that of the corresponding normalized $(\widetilde{\rho}_{st})_{0 \leq s \leq t \leq T}$,
the same is no more true for h-longevity. The reason is that the normalization terms $\rho_{st}(0)$ and $\rho_{su}(0)$ are different in general. As pointed out by Bion-Nadal and Di Nunno~\cite{bion-nadal-di-nunno}, Remark 2.12, also strong time-consistency of a fully-dynamic risk measure is not transferred to the normalized version.

\smallskip
\noindent
\begin{proof}[Proof of Proposition~\ref{prop: charact-penalty-weak-tc}]
In view of the above comments, for simplicity, we prove the result in terms of the normalized $\widetilde{\rho}$.

\noindent
a)
Assume that~\eqref{eq: characteriz-weak-tc-penalty-tilde} holds true. By the dual representation~\eqref{eq: dual-repres-fully-dyn} of $\widetilde{\rho}$,
\begin{eqnarray}
&&\widetilde{\rho}_{su} (-\widetilde{\rho}_{tu}(X)) \notag \\
&=&
 \underset{Q \in \mathcal{Q}_{su}}{\essmax} \;\{E_Q [\widetilde{\rho}_{tu}(X)  | \mathcal{F}_s] - \widetilde{\alpha}_{su}(Q) \} \notag\\
&=&
 \underset{Q \in \mathcal{Q}_{su}}{\essmax} \;
 \{E_Q [
  \underset{R \in \mathcal{Q}_{tu}}{\essmax} \;
 \{E_R [-X  | \mathcal{F}_t] - \widetilde{\alpha}_{tu}(R) \}  | \mathcal{F}_s] - \widetilde{\alpha}_{su}(Q) \} \notag\\
&=&
 \underset{Q \in \mathcal{Q}_{su}, R \in \mathcal{Q}_{tu}}{\essmax} \;
 \{E_Q [E_R [-X  | \mathcal{F}_t] | \mathcal{F}_s] - E_Q [\widetilde{\alpha}_{tu}(R) | \mathcal{F}_s]- \widetilde{\alpha}_{su}(Q) \}  \label{eq; eq003a}\\
&=&
 \underset{\substack{S=Q|_{\mathcal{F}_t} \cdot R: \\ Q \in \mathcal{Q}_{su}, R \in \mathcal{Q}_{tu}}}{\essmax} \;
 \{E_S [-X   | \mathcal{F}_s] -  \widetilde{\alpha}_{su}(S) \} = \widetilde{\rho}_{su}(X), \label{eq; eq003b}
\end{eqnarray}
where $S$ is obtained by pasting $Q$ on $[s,t]$ and $R$ on $[t,u]$.
Here above,~\eqref{eq; eq003a} follows from the same arguments as in~\cite{bion-nadal-SPA} and~\cite{detlef-scandolo}, while the first equality in~\eqref{eq; eq003b}  is due to~\eqref{eq: characteriz-weak-tc-penalty-tilde} and to m-stability~\eqref{pasting}.
\smallskip

\noindent
b)
Assume that weak time-consistency~\eqref{eq: weak-tc-normalized} holds for $(\widetilde{\rho}_{st})_{0 \leq s \leq t \leq T}$. On the one hand, again by the dual representation~\eqref{eq: dual-repres-fully-dyn}, we have
\begin{eqnarray}
\hspace{-4mm} &&\widetilde{\rho}_{su} (-\widetilde{\rho}_{tu}(X)) \notag \\
\hspace{-4mm}&=&
\underset{Q \in \mathcal{Q}_{su} ,R \in \mathcal{Q}_{tu}}{\essmax} \;
 \{E_Q [E_R [-X  | \mathcal{F}_t] | \mathcal{F}_s] - E_Q [\widetilde{\alpha}_{tu}(R) | \mathcal{F}_s]- \widetilde{\alpha}_{su}(Q) \}.   \label{eq: proof1}
\end{eqnarray}
On the other hand, by weak time-consistency,
\begin{eqnarray}
&&\widetilde{\rho}_{su} (-\widetilde{\rho}_{tu}(X))=\widetilde{\rho}_{su} (X)
= \underset{S \in \mathcal{Q}_{su}}{\essmax}\; \{E_S [-X   | \mathcal{F}_s] - \widetilde{\alpha}_{su}(S) \} \notag \\
&=& \underset{\substack{
S=Q|_{\mathcal{F}_t} \cdot R: \\ Q \in \mathcal{Q}_{su} ,R \in \mathcal{Q}_{tu}
}}{\essmax}\; \{E_Q [E_R [-X  | \mathcal{F}_t] | \mathcal{F}_s] - \widetilde{\alpha}_{su}(S) \},\label{eq: proof2}
\end{eqnarray}
which is due to the fact that, for any $S \in \mathcal{Q}_{su}$, there exist $Q \in \mathcal{Q}_{su}, R \in \mathcal{Q}_{tu}$ such that $S=Q|_{\mathcal{F}_t} \cdot R$ and, vice versa, given $Q \in \mathcal{Q}_{su},R \in \mathcal{Q}_{tu}$, the pasting $S=Q|_{\mathcal{F}_t} \cdot R \in \mathcal{Q}_{su}$.
Since $\widetilde{\alpha}_{su}(S)$ is the minimal penalty function on $[s,u]$,~\eqref{eq: proof1} and~\eqref{eq: proof2} imply that
\begin{equation}\label{eq:service}
\widetilde{\alpha}_{su} (S) \leq \widetilde{\alpha}_{su}(Q)+ E_{Q} [\widetilde{\alpha}_{tu}(R)  |\mathcal{F}_s ]
\end{equation}
for any $Q \in \mathcal{Q}_{su},R \in \mathcal{Q}_{tu}$ and the pasting $S=Q|_{\mathcal{F}_t} \cdot R$.
\smallskip

It remains to prove that~\eqref{eq: characteriz-weak-tc-penalty-2} holds with an equality at least for the optimal scenarios in the dual representation. By the arguments above, it is then enough to prove the reverse inequality in~\eqref{eq:service}.
Since in the dual representation~\eqref{eq: dual-repres-fully-dyn} the $\essmax$ is attained, it follows that $\widetilde{\rho}_{su} (-\widetilde{\rho}_{tu}(X))= \widetilde{\rho}_{su}(X)$ becomes
\begin{equation} \label{eq: eq001}
E_{Q} [E_{R} [-X |\mathcal{F}_t ] - \widetilde{\alpha}_{tu}(R) |\mathcal{F}_s ] - \widetilde{\alpha}_{su}(Q)=\widetilde{\rho}_{su}(X)
\end{equation}
for some $R \in \mathcal{Q}_{tu}, Q \in \mathcal{Q}_{su}$ (depending on $X$). By defining $S$ as the pasting $Q \cdot R$ of $Q$ on $[s,t]$ and $R$ on $[t,u]$,~\eqref{eq: eq001} reduces to
\begin{equation} \label{eq: eq002}
E_{S} [-X |\mathcal{F}_s ]- E_{Q} [ \widetilde{\alpha}_{tu}(R) |\mathcal{F}_s ] - \widetilde{\alpha}_{su}(Q)=\widetilde{\rho}_{su}(X).
\end{equation}
From $\widetilde{\rho}_{su}(X) \geq E_{S} [-X |\mathcal{F}_s ] - \widetilde{\alpha}_{su}(S)$ and~\eqref{eq: eq002}, it follows that $\widetilde{\alpha}_{su} (S) \geq \widetilde{\alpha}_{su}(Q)+ E_{Q} [\widetilde{\alpha}_{tu}(R) |\mathcal{F}_s ]$.
\end{proof}

\begin{remark} \label{rem: tc and longevity}
If $(\rho_{st})_{0 \leq s \leq t \leq T}$ satisfies weak time-consistency, h-longevity, and $\rho_{tu}(0) \leq 0$ for any $t,u$, then
\begin{equation} \label{eq: sub TC}
\rho_{st} (-\rho_{tu}(X)) \leq \rho_{su}(X) \quad \mbox{ for any } X \in L^{p}(\mathcal{F}_u),
\end{equation}
called sub (strong) time-consistency in the following. Notice that $\rho_{tu}(0) \leq 0$ is equivalent to $\essinf_{Q \in \mathcal{Q}_{tu}} \alpha_{tu} (Q)\geq 0$.

On the other hand, sub (strong) time-consistency~\eqref{eq: sub TC} together with $\rho_{tu}(0) \geq 0$, for any $t,u$, imply h-longevity. For any $\bar{X} \in L^{p}(\mathcal{F}_t)$, indeed,
$$
\rho_{st} (\bar{X}) \leq \rho_{st} (\bar{X}-\rho_{tu}(0)) = \rho_{st} (-\rho_{tu}(\bar{X}))  \leq \rho_{su}( \bar{X}),
$$
where the first inequality is due to $\rho_{tu}(0) \geq 0$ and monotonicity, the last to sub time-consistency, while the equality follows by $\mathcal{F}_t$-translation invariance.
\end{remark}

Proceeding similarly to Acciaio and Penner~\cite{acciaio-penner}, it is easy to prove the relation between sub time-consistency and acceptance sets for fully-dynamic risk measures, here formulated as follows.

\begin{proposition} \label{prop: charact-sub time-cons}
For any fully-dynamic risk measure $(\rho_{st})_{0 \leq s \leq t \leq T}$ the following are equivalent:\smallskip

\noindent (i)
Sub (strong) time-consistency, that is $\rho_{st} (-\rho_{tu}(X)) \leq \rho_{su}(X)$ for any $X \in L^{p}(\mathcal{F}_u)$, $s \leq t \leq u$;
\smallskip

\noindent (ii)
$\mathcal{A}_{su} \subseteq \mathcal{A}_{st} + \mathcal{A}_{tu}$ where $\mathcal{A}_{su} \triangleq \{Z \in L^{p}(\mathcal{F}_u): \rho_{su} (Z) \leq 0, P\mbox{-a.s.} \}$ is the acceptance set associated to $\rho_{su}$;
\smallskip

\noindent (iii)
$\alpha_{su}(Q) \leq \alpha_{st} (Q|_{\mathcal{F}_t}) + E_Q[\alpha_{tu}(Q) \vert \mathcal{F}_s]$ for any $Q \in \mathcal{Q}_{su}$, $s \leq t \leq u$.
\end{proposition}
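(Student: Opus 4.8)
The plan is to establish the three implications cyclically, as $(i)\Rightarrow(ii)\Rightarrow(iii)\Rightarrow(i)$. Throughout I use two ingredients. The first is the acceptance-set form of the minimal penalty: $\mathcal{F}_s$-translation invariance shows that for every $X\in L^p(\mathcal{F}_t)$ the position $X+\rho_{st}(X)$ lies in $\mathcal{A}_{st}$ and satisfies $E_Q[-X|\mathcal{F}_s]-\rho_{st}(X)=E_Q[-(X+\rho_{st}(X))|\mathcal{F}_s]$, so that, together with $\rho_{st}(Z)\le 0$ for $Z\in\mathcal{A}_{st}$, formula~\eqref{minimal penalty} becomes
$$\alpha_{st}(Q)=\esssup_{Z\in\mathcal{A}_{st}}E_Q[-Z|\mathcal{F}_s].$$
As already in~\eqref{eq: cocycle} and in statement $(iii)$, each $\alpha_{tu}$ is read off~\eqref{minimal penalty} for every $Q\ll P$ on $\mathcal{F}_u$, and not only for $Q\in\mathcal{Q}_{tu}$. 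The second ingredient is the composition of dual representations under pasting that already produced~\eqref{eq; eq003a}.

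For $(i)\Rightarrow(ii)$, take $Z\in\mathcal{A}_{su}$ and split it as $Z=Z_1+Z_2$ with $Z_2:=Z+\rho_{tu}(Z)$ and $Z_1:=-\rho_{tu}(Z)\in L^p(\mathcal{F}_t)$. Translation invariance gives $\rho_{tu}(Z_2)=0$, so $Z_2\in\mathcal{A}_{tu}$, while sub time-consistency and $\rho_{su}(Z)\le 0$ give $\rho_{st}(Z_1)=\rho_{st}(-\rho_{tu}(Z))\le\rho_{su}(Z)\le 0$, so $Z_1\in\mathcal{A}_{st}$; hence $\mathcal{A}_{su}\subseteq\mathcal{A}_{st}+\mathcal{A}_{tu}$. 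For $(ii)\Rightarrow(iii)$, fix $Q\in\mathcal{Q}_{su}$ and $Z\in\mathcal{A}_{su}$, and use $(ii)$ to write $Z=Z_1+Z_2$ with $Z_1\in\mathcal{A}_{st}$, $Z_2\in\mathcal{A}_{tu}$. Since $Z_1$ is $\mathcal{F}_t$-measurable, $E_Q[-Z_1|\mathcal{F}_s]=E_{Q|_{\mathcal{F}_t}}[-Z_1|\mathcal{F}_s]\le\alpha_{st}(Q|_{\mathcal{F}_t})$ by the acceptance-set form; since $Z_2\in\mathcal{A}_{tu}$, $E_Q[-Z_2|\mathcal{F}_t]\le\alpha_{tu}(Q)$, whence by the tower property $E_Q[-Z_2|\mathcal{F}_s]\le E_Q[\alpha_{tu}(Q)|\mathcal{F}_s]$. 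Adding the two bounds and taking the $\esssup$ over $Z\in\mathcal{A}_{su}$ yields $(iii)$.

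For $(iii)\Rightarrow(i)$, substitute the dual representation of $\rho_{tu}(X)$ into that of $\rho_{st}$ and interchange the two $\essmax$'s by the same computation leading to~\eqref{eq; eq003a}, obtaining
$$\rho_{st}(-\rho_{tu}(X))=\essmax_{Q\in\mathcal{Q}_{st},\,R\in\mathcal{Q}_{tu}}\bigl\{E_S[-X|\mathcal{F}_s]-\alpha_{st}(Q)-E_Q[\alpha_{tu}(R)|\mathcal{F}_s]\bigr\},$$
with $S=Q\cdot R$ the pasting~\eqref{pasting}. Now $S\in\mathcal{Q}_{su}$; moreover $S|_{\mathcal{F}_t}=Q$, so $\alpha_{st}(Q)=\alpha_{st}(S|_{\mathcal{F}_t})$, and $E_S[\cdot|\mathcal{F}_t]=E_R[\cdot|\mathcal{F}_t]$, so $\alpha_{tu}(R)=\alpha_{tu}(S)$ and $E_Q[\alpha_{tu}(R)|\mathcal{F}_s]=E_S[\alpha_{tu}(S)|\mathcal{F}_s]$. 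Applying the inequality $(iii)$ to $S$ bounds each bracket by $E_S[-X|\mathcal{F}_s]-\alpha_{su}(S)$, and since the pastings $S$ form a subfamily of $\mathcal{Q}_{su}$ the $\essmax$ is in turn bounded by $\essmax_{S\in\mathcal{Q}_{su}}\{E_S[-X|\mathcal{F}_s]-\alpha_{su}(S)\}=\rho_{su}(X)$, which is $(i)$.

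The elementary links $(i)\Rightarrow(ii)$ and $(ii)\Rightarrow(iii)$ need only translation invariance, monotonicity and the acceptance-set form of $\alpha$. The delicate step — the one I would write out in full — is $(iii)\Rightarrow(i)$: the nested-$\essmax$ interchange must be justified exactly as for~\eqref{eq; eq003a} (upward directedness of the dual family, together with one direction of the m-stability~\eqref{pasting}), and one must track carefully which measure each penalty is evaluated at, using that $\alpha_{tu}$ depends on a measure only through its conditional structure on $[t,u]$, so that $\alpha_{tu}(R)=\alpha_{tu}(S)$, and that $\alpha_{tu}$ is, as noted, defined by~\eqref{minimal penalty} for all $Q\in\mathcal{Q}_{su}$ and not merely for $Q\in\mathcal{Q}_{tu}$.
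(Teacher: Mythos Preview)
Your proposal is correct and is precisely the Acciaio--Penner argument the paper invokes; the paper itself gives no detailed proof but only the reference, and your cyclic scheme $(i)\Rightarrow(ii)\Rightarrow(iii)\Rightarrow(i)$ with the splitting $Z_1=-\rho_{tu}(Z)$, $Z_2=Z+\rho_{tu}(Z)$ and the pasting argument for $(iii)\Rightarrow(i)$ is exactly what that reference does, adapted to the two-parameter setting. The care you take in identifying $\alpha_{st}(Q)=\alpha_{st}(S|_{\mathcal{F}_t})$ and $\alpha_{tu}(R)=\alpha_{tu}(S)$ via the conditional structure is the right bookkeeping and matches how the paper uses~\eqref{eq: cocycle}.
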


To summarize, the following implications among the different notions of time-consistency investigated above and h-longevity hold:\medskip

$$
\begin{array}{cccl}
&&\mbox{\small strong TC} & \\
\\
&& \Updownarrow & {\footnotesize \mbox{+ normalization and restriction}}\\
&& & {\footnotesize \mbox{\quad (see~\cite{bion-nadal-di-nunno})}}\\\\
 \mbox{\small order TC} & \Longleftrightarrow & \mbox{\small weakly TC} & \\
& {\footnotesize\mbox{\quad (see Prop.~\ref{prop: equiv-weak-tc})}} && \\
&& \Downarrow  & {\footnotesize \mbox{+ h-longevity and }  \rho_{tu}(0) \leq 0  \mbox{ }}\\
&&&{\footnotesize \mbox{\quad (see Remark~\ref{rem: tc and longevity})}}\\
&&&\\
&& \mbox{\small sub TC} & \\
 \\
&& \Downarrow  & {\footnotesize \mbox{+  }  \rho_{tu}(0) \geq 0 } \\
&&   & \mbox{\footnotesize \quad (see Remark~\ref{rem: tc and longevity})}\\
&& \mbox{\small h-longevity} &
\end{array}
$$

\subsection{Horizon risk and h-longevity} \label{sec: longevity}

We recall that h-longevity~\eqref{eq: time-value} has been formulated as
\begin{equation*}
  \rho_{st} (X) \leq \rho_{su}(X) \quad \mbox{ for any }  0 \leq s \leq t \leq u,  X \in L^{p}(\mathcal{F}_t).
  \end{equation*}

From the formulation of h-longevity it is clear that the risk measure with longer horizon $\rho_{su}$ is relevant only restricted on $L^{p}(\mathcal{F}_t)$. In other words, longevity aims to penalize the risk measurement of a position done with the wrong risk measure, i.e. with the risk measure suitable for a longer horizon.

For this reason, in the sequel, we denote the restriction of $\rho_{su}$ on $L^{p}(\mathcal{F}_t)$ by $\bar{\rho}^t_{su}$, while $\bar{\alpha}^t_{su}$ is the corresponding minimal penalty function, i.e.
\begin{equation*}
\bar{\alpha}^t_{su}(Q)=\underset{X \in L^{p}(\mathcal{F}_t)}{\esssup}\; \{E_Q[-X | \mathcal{F}_s] - \bar{\rho}^t_{su}(X)\}, \quad Q \in \mathcal{Q}_{st}.
\end{equation*}
It then follows that
\begin{equation*}
\bar{\alpha}^t_{su}(Q_{\vert\mathcal{F}_t}) \leq \alpha_{su}(Q), \qquad Q\in \mathcal{Q}_{su}.
\end{equation*}
In fact,
\begin{eqnarray*}
\alpha_{su}(Q)&=&\underset{X \in L^{p}(\mathcal{F}_u)}{\esssup} \{E_Q[- |X \mathcal{F}_s] - \rho_{su}(X)\} \\
&\geq &\underset{X \in L^{p}(\mathcal{F}_t)}{\esssup} \{E_Q[-X | \mathcal{F}_s] -\bar{\rho}^t_{su}(X)\}=\bar{\alpha}^t_{su}(\left. Q \right|_{\mathcal{F}_t}).
\end{eqnarray*}

\bigskip
Using the notation above, h-longevity naturally leads to
\begin{equation} \label{eq: longevity-gammaX}
\bar{\rho}^t_{su}(X)=\rho_{su}(X)=\rho_{st}(X)+\gamma(s,t,u,X)
\end{equation}
for any  $0 \leq s \leq t \leq u$, $X \in L^{p}(\mathcal{F}_t)$, and
for a suitable $\mathcal{F}_s$-measurable $\gamma(s,t,u,X) \geq 0$ that may depend on the position $X$, on the time of evaluation $s$, on the time parameter $t$ referring to the measurability of $X$, and on the ``wrong'' time horizon $u$ used for evaluating $X$.
The term $\gamma$ can be then seen as an indicator quantifying the horizon risk or, roughly speaking, as an additive term of adjustment/calibration.
It follows that, for any $s \leq t \leq u$ and $X \in L^{p}(\mathcal{F}_t)$, $\gamma(s,t,t,X)=0$ and, by  translation invariance of the fully-dynamic risk measure, that $\gamma(s, t, u,X+c_s)=\gamma(s,t,u,X)$ for any $c_s \in L^p(\mathcal{F}_s)$.
The restriction property~\eqref{restriction} is then equivalent to $\gamma(s,t,u,X) = 0$ for all $0 \leq s \leq t \leq u$, $X \in L^{p}(\mathcal{F}_t)$.

\medskip

The following result characterizes longevity in terms of acceptance sets and of penalty functions.

\begin{proposition} \label{prop: longevity-character}
For a fully dynamic risk measure $(\rho_{st})_{s,t}$:

\noindent (a) longevity is equivalent to $\mathcal{A}_{su} \cap L^{p}(\mathcal{F}_t) \subseteq \mathcal{A}_{st}$ for any $s\leq t \leq u$.

\noindent (b) restriction is equivalent to $\mathcal{A}_{su} \cap L^{p}(\mathcal{F}_t) = \mathcal{A}_{st}$ for any $s\leq t \leq u$.

\noindent (c) longevity implies that $\alpha_{st}(Q) \geq \bar{\alpha}^t_{su}(Q)$ and also
$$\bar{\alpha}^t_{su}(Q ) \geq \alpha_{st}(Q )-\esssup_{X \in L^{p}(\mathcal{F}_t)} \gamma(s,t,u,X),$$
 for any $s\leq t \leq u$ and $Q\in \mathcal{Q}_{st}$.

\noindent (d) restriction implies that $\alpha_{st}(Q) = \bar{\alpha}^t_{su}(Q)$ for any $s\leq t \leq u$ and $Q \in \mathcal{Q}_{st}$.
\end{proposition}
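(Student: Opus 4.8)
The plan is to take (a) as the heart of the statement, deduce (b) from it by symmetry, and read off (c)--(d) directly from the definitions of the penalty functions once (a) has fixed the ordering of $\rho_{st}$ and $\bar\rho^t_{su}$. The one tool used repeatedly is the reconstruction of a cash-invariant risk measure from its acceptance set: $\mathcal{F}_s$-translation invariance gives $\rho_{st}(X+m)=\rho_{st}(X)-m$ for $m\in L^p(\mathcal{F}_s)$, so $X+m\in\mathcal{A}_{st}$ if and only if $m\geq\rho_{st}(X)$, whence
$$
\rho_{st}(X)=\essinf\{m\in L^p(\mathcal{F}_s):X+m\in\mathcal{A}_{st}\},
$$
the infimum being attained at $m=\rho_{st}(X)$. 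I would also record that $\mathcal{A}_{su}\cap L^p(\mathcal{F}_t)$ is exactly the acceptance set of the restricted measure $\bar\rho^t_{su}$.

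For (a), the forward implication is immediate: if longevity holds and $Z\in\mathcal{A}_{su}\cap L^p(\mathcal{F}_t)$, then $\rho_{st}(Z)\leq\rho_{su}(Z)\leq 0$, so $Z\in\mathcal{A}_{st}$. For the converse I would fix $X\in L^p(\mathcal{F}_t)$ and set $m:=\rho_{su}(X)\in L^p(\mathcal{F}_s)$; cash-invariance yields $\rho_{su}(X+m)=0$, so $X+m\in\mathcal{A}_{su}\cap L^p(\mathcal{F}_t)$, and the assumed inclusion places $X+m$ in $\mathcal{A}_{st}$, i.e. $\rho_{st}(X)-m\leq 0$, which is precisely $\rho_{st}(X)\leq\rho_{su}(X)$.

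Part (b) follows by running the same argument in both directions. By (a), longevity is equivalent to $\mathcal{A}_{su}\cap L^p(\mathcal{F}_t)\subseteq\mathcal{A}_{st}$; the symmetric reasoning (take $m=\rho_{st}(X)$ in $\mathcal{A}_{st}\subseteq\mathcal{A}_{su}$) shows the reverse inequality $\rho_{st}\geq\rho_{su}$ on $L^p(\mathcal{F}_t)$ is equivalent to $\mathcal{A}_{st}\subseteq\mathcal{A}_{su}\cap L^p(\mathcal{F}_t)$. Since restriction is exactly the conjunction $\rho_{st}=\rho_{su}$ on $L^p(\mathcal{F}_t)$, it is equivalent to the equality of the two acceptance sets.

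For (c) and (d) I would argue directly from the penalty formulas, both of which run the essential supremum over the same class $L^p(\mathcal{F}_t)$. Longevity reads $\rho_{st}(X)\leq\bar\rho^t_{su}(X)$, so term by term $E_Q[-X|\mathcal{F}_s]-\rho_{st}(X)\geq E_Q[-X|\mathcal{F}_s]-\bar\rho^t_{su}(X)$, and taking $\esssup$ over $X$ gives $\alpha_{st}(Q)\geq\bar\alpha^t_{su}(Q)$. For the lower bound I would insert $\rho_{st}(X)=\bar\rho^t_{su}(X)-\gamma(s,t,u,X)$ from~\eqref{eq: longevity-gammaX} and use $\esssup(f+g)\leq\esssup f+\esssup g$ to obtain $\alpha_{st}(Q)\leq\bar\alpha^t_{su}(Q)+\esssup_{X}\gamma(s,t,u,X)$, which is the stated estimate after rearranging. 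Part (d) is then immediate, since restriction forces $\gamma\equiv 0$ and the two bounds of (c) collapse to equality. I expect the only real obstacle to be the converse of (a): recovering an inequality between risk values from a set inclusion needs the reconstruction formula above, and one must verify that the candidate $m=\rho_{su}(X)$ sits in $L^p(\mathcal{F}_s)$ and that $X+m$ remains in $L^p(\mathcal{F}_t)$, so as to land in the intersected acceptance set. The remaining computations are routine manipulations of the definitions.
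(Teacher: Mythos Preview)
Your proof is correct and follows essentially the same route as the paper: the forward direction of (a) is identical, your converse of (a) is the pointwise version of the paper's $\essinf$-representation argument, and your treatment of (c)--(d) via termwise comparison of the penalty definitions is equivalent to the paper's use of the acceptance-set form $\alpha_{st}(Q)=\esssup_{X\in\mathcal{A}_{st}}E_Q[-X|\mathcal{F}_s]$ combined with the inclusion from (a). The only cosmetic difference is that the paper derives (d) from the acceptance-set equality in (b) rather than from (c) with $\gamma\equiv 0$, but these amount to the same thing.
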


\begin{proof}
(a) For any $X \in \mathcal{A}_{su} \cap L^{p}(\mathcal{F}_t)$ it follows that $\rho_{su}(X)\leq 0$ and, by longevity, also that $\rho_{st}(X)\leq 0$. Hence $\mathcal{A}_{su} \cap L^{p}(\mathcal{F}_t) \subseteq \mathcal{A}_{st}$.
Conversely, from the representation of risk measures in terms of acceptance sets and from the condition on acceptance sets, it follows that, for any $X \in L^{p}(\mathcal{F}_t)$,
\begin{eqnarray*}
\rho_{su}(X)&=& \essinf \{m_s \in L^{p}(\mathcal{F}_s)| \; m_s + X \in \mathcal{A}_{su} \} \\
&\geq & \essinf \{m_s \in L^{p}(\mathcal{F}_s)| \; m_s + X \in \mathcal{A}_{st} \}=\rho_{st}(X).
\end{eqnarray*}

\noindent
(b) Assume now restriction. From~\eqref{eq: longevity-gammaX}, we see that restriction can be interpreted as h-longevity with $\gamma(s,t,u,X) =0$.  Hence, from (a), it remains to prove the reverse inclusion for acceptance sets.
For any $X \in \mathcal{A}_{st}$ (hence $X \in L^{p}(\mathcal{F}_t)$) we have that $\rho_{st} (X) \leq 0$. By restriction, $\rho_{su}(X)=\rho_{st}(X)\leq 0$, hence $\mathcal{A}_{su} \cap L^{p}(\mathcal{F}_t) \supseteq \mathcal{A}_{st}$.
Conversely, assume $\mathcal{A}_{su} \cap L^{p}(\mathcal{F}_t) = \mathcal{A}_{st}$. From the representation of risk measures in terms of acceptance sets, we obtain that, for any $X \in L^{p}(\mathcal{F}_t)$,
\begin{eqnarray*}
\rho_{su}(X)&=& \essinf \{m_s \in L^{p}(\mathcal{F}_s)| \: m_s + X \in \mathcal{A}_{su} \} \\
&= & \essinf \{m_s \in L^{p}(\mathcal{F}_s)| \:m_s + X \in \mathcal{A}_{st} \}=\rho_{st}(X).
\end{eqnarray*}

\noindent
(c) Proceeding similarly to F\"{o}llmer and Schied~\cite{follmer-schied-book}, Thm. 4.16, by  translation invariance of $\rho_{su}$, we have
\begin{equation*}
\alpha_{st}(Q)=\underset{X \in \mathcal{A}_{st}}{\esssup} \, E_Q [-X | \mathcal{F}_s]
\end{equation*}
for any $s\leq t$ and $Q\in \mathcal{Q}_{st}$.
By (a), h-longevity and translation invariance imply that, for any $Q\in \mathcal{Q}_{st}$,
\begin{eqnarray*}
\alpha_{st}(Q)&=& \underset{X \in \mathcal{A}_{st}}{\esssup} \,E_Q [-X | \mathcal{F}_s] \notag\\
&\geq & \underset{X \in \mathcal{A}_{su}\cap L^{p}(\mathcal{F}_t)}{\esssup} E_Q [-X | \mathcal{F}_s]\\
&=&\underset{\substack{X \in L^{p}(\mathcal{F}_t):\\ \rho_{su}(X) \leq 0}}{\esssup} E_Q [-X | \mathcal{F}_s] = \bar{\alpha}^t_{su}(Q).
\end{eqnarray*}
Moreover, in terms of the indicator $\gamma$ in~\eqref{eq: longevity-gammaX}, we have that
\begin{eqnarray*}
\bar{\alpha}^t_{su}( Q )
&= &\underset{X \in L^{p}(\mathcal{F}_t)}{\esssup} \{E_Q[-X | \mathcal{F}_s] -\bar{\rho}^t_{su}(X)\} \\
&=&\underset{X \in L^{p}(\mathcal{F}_t)}{\esssup}  \{E_Q[-X | \mathcal{F}_s] - \rho_{st}(X)-\gamma(s,t,u,X)\} \\
&\geq &\underset{X \in L^{p}(\mathcal{F}_t)}{\esssup}  \{E_Q[-X | \mathcal{F}_s] - \rho_{st}(X)\}-\underset{X \in L^{p}(\mathcal{F}_t)}{\esssup}  \gamma(s,t,u,X) \\
&=& \alpha_{st}( Q ) -\underset{X \in L^{p}(\mathcal{F}_t)}{\esssup}  \gamma(s,t,u,X).
\end{eqnarray*}

\noindent
(d) By the equivalence between restriction and the condition on acceptance sets in (b), it follows that for any $Q\in \mathcal{Q}_{st}$
\begin{eqnarray*}
\alpha_{st}(Q)&=& \underset{X \in \mathcal{A}_{st}}{\esssup} \,E_Q [-X | \mathcal{F}_s] \\
&= & \underset{X \in \mathcal{A}_{su}\cap L^{p}(\mathcal{F}_t)}{\esssup} \, E_Q [-X | \mathcal{F}_s]=\bar{\alpha}^t_{su}(Q).
\end{eqnarray*}
The proof is then complete. \end{proof}

\smallskip\noindent
With (c), we see that the greater is $\esssup_{X \in L^{p}(\mathcal{F}_t)} \gamma(s,t,u,X)$ for $\mathcal{F}_t$-measurable positions, the lower is the lower bound $\alpha_{st}(\left. Q \right|_{\mathcal{F}_t}) -\esssup_{X \in L^{p}(\mathcal{F}_t)} \gamma(s,t,u,X)$ of $\alpha_{su}(Q)$. The term $\esssup_{X \in L^{p}(\mathcal{F}_t)} \gamma(s,t,u,X)$ can be then interpreted as the maximal error (at the level of the penalty function) for a wrong use of $\rho_{su}$ for positions $X$ belonging to $L^{p}(\mathcal{F}_t)$, that is, $\rho_{s \cdot}$ with a wrong time horizon.
\medskip

From a motivational point of view, one can imagine $\gamma$ dependent on the time horizon, that is, $\gamma (s,t,u,X)=\gamma_{s,t}(h,X)$ with $h=u-t$ or, even, $\gamma_{s,t}(h)$ independent from $X$. In particular, $h=u-t$ can be interpreted as the length of the time interval over which there is an uncorrect use of the risk measure ($\rho_{su}$ versus $\rho_{st}$). See Example~\ref{ex: longevity-bsde} and Example~\ref{ex: longevity-bsvie}.

%
%

\smallskip
In what follows, we investigate whether BSDEs or BSVIEs can provide families of fully-dynamic risk measures satisfying sub or weak time-consistency and h-longevity. In particular, we study the cases of a single driver $g$ and of a family of drivers $(g_t)_{t \in [0,T]}$ depending on the time horizon $t$ considered in $\rho_{st}$. The former case will be shortened by BSDE $(g)$ (or BSVIE $(g)$), the latter by BSDE $(g_t)$ (or BSVIE $(g_t)$).

\section{Relation with BSDEs} \label{sec: bsdes}

Hereafter, we consider a fully-dynamic risk measure induced by a single BSDE with driver $g$ and also measures generated by a family of BSDEs associated to the drivers $\mathcal{G}=(g_t)_{t \in [0,T]}$ where the index $t$ refers to the time horizon.
We will restrict our attention to $L^2$ spaces.\medskip

On the probability space $(\Omega, \mathcal{F},P)$ we consider a $d$-dimensional Brownian motion $(B_t)_{t \in [0,T]}$ and the $P$-augmented natural filtration $(\mathcal{F}_t)_{t \in [0,T]}$ of $(B_t)_{t \in [0,T]}$.
According to Peng~\cite{peng97}, the solution $(Y_t,Z_t)_{t \in [0,T]}$ of the BSDE
\begin{equation} \label{eq: BSDE}
Y_t= X+ \int_t^T g(s,Y_s,Z_s) \, ds - \int_t^T Z_s \, dB_s
\end{equation}
can be seen as an operator depending on the driver $g$ and evaluated at the final condition $X \in L^2( \mathcal{F}_T)$.
In our work, we consider a driver
\begin{equation*}
g: \Omega \times [0,T]\times \Bbb R \times \Bbb R^d \to \Bbb R
\end{equation*}
satisfying the \textit{standard assumptions}:
\begin{itemize}
\item adapted,
\item uniformly Lipschitz, i.e. there exists a constant $C>0$ such that, $dP \times dt$-a.e.,
\begin{equation*}
\vert g(\omega,t, y_1, z_1) - g(\omega, t , y_2, z_2)\vert \leq C (\vert y_1 - y_2 \vert + \vert z_1 - z_2\vert ),
\end{equation*}
for any $y_1, y_2 \in \Bbb R, \, z_1, z_2 \in \Bbb R^d$,
where $\vert \cdot \vert$ denotes the Euclidean norm in $\Bbb R^k$ for whatever $k$ is relevant;
\item $E\left[\int_0^T \vert g(s,0,0)\vert ^2 \, ds \right]<+\infty$.
\end{itemize}
Under these conditions, equation~\eqref{eq: BSDE} admits a unique solution $(Y_t,Z_t)_{t \in [0,T]}$, with $(Y_t)_{t \in [0,T]} \in \Bbb H^2_{[0,T]} (\Bbb R)$ and $(Z_t)_{t \in [0,T]} \in \Bbb H^2_{[0,T]} (\Bbb R^d)$, where we have set
\begin{equation*}
\Bbb H^2_{[a,b]} (\Bbb R^k) \hspace{-1mm} \triangleq \hspace{-1mm} \Big\{
\mbox{adapted } \Bbb R^k\mbox{-valued processes } (\eta_s)_{s \in [a,b]}: \hspace{-1mm} E\Big[\hspace{-1mm}\int_a^b \hspace{-2mm} \vert \eta_s \vert^2 \, ds \Big] \hspace{-1mm} < \hspace{-1mm} \infty
\Big\}.
\end{equation*}
For further details see, e.g., El Karoui et al.~\cite{EKPQ}.

In Peng~\cite{peng97,peng05} and Rosazza Gianin~\cite{rg} the relationship among BSDEs, nonlinear expectations and dynamic risk measures is detailed clarifying that the properties of $g$ reflect the properties of the risk measures associated. To summarize, in the financial context of monetary risk measures, if $g$ in~\eqref{eq: BSDE} does not depend on $y$, then the risk measure is translation invariant.
If $g$ depends monotonically on $y$ then the risk measure is cash-subadditive, as discussed in El Karoui and Ravanelli~\cite{EK-rav}.
Also $g$ convex produces a convex risk measure. See Barrieu and El Karoui~\cite{barrieu-el-karoui}, Jiang~\cite{jiang}, and Rosazza Gianin~\cite{rg}.

Beyond the Lipschitz condition on the driver, the connection between BSDEs and risk measures has been studied in terms of maximal solutions of BSDEs (see, e.g., Barrieu and El Karoui~\cite{barrieu-el-karoui} and Kobylanski~\cite{kolylanski}). When it comes to generalizations beyond the Brownian framework we can refer, e.g., to Royer~\cite{royer}, Quenez and Sulem~\cite{quenez-sulem}, and Laeven and Stadje~\cite{laeven-stadje}.

\subsection{Risk measures generated by a single BSDE} \label{sec: BSDEg}

In this work we have assumed cash-invariant risk measures, then we consider
the following BSDE with driver $g$, not depending on $y$, and terminal condition $X \in L^2 (\mathcal{F}_u)$:
\begin{equation} \label{eq: BSDEg}
Y_t= X + \int_t^u g(s,Z_s) ds - \int_t^u Z_s dB_s, \quad 0 \leq t \leq u,
\end{equation}
where $u \in [0,T]$.
To simplify the notation, we will occasionally adopt the Peng's notation where $\mathcal{E}^g \left(X \vert \mathcal{F}_t \right)$ denotes the conditional $g$-expectation of $X$ at time $t$, that is the $Y$-component of the solution $(Y,Z)$ at time $t$ of the BSDE above.Then, we focus on risk measures of the following form:
\begin{equation} \label{eq: rho_BSDEg}
\rho_{tu} (X)= \mathcal{E}^g \left(-X \vert \mathcal{F}_t \right), \quad X \in L^{2} (\mathcal{F}_u).
\end{equation}

The condition $g(t,0)=0$, guarantees the restriction property as well as the normalization~\eqref{normalization}. Hence, in our case we cannot assume $g(t,0)=0$.

\begin{proposition} \label{prop: norm-restrict-BSDE-g}
The following statements are equivalent:

\noindent (a) $g(t,0)=0$ for any $t \in [0,T]$;

\noindent (b) $\rho$ is normalized, see~\eqref{normalization};

\noindent (c) $\rho$ has the restriction property~\eqref{restriction}.
\end{proposition}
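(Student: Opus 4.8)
The plan is to prove the cycle (a) $\Rightarrow$ (c) $\Rightarrow$ (b) $\Rightarrow$ (a), using only two standard facts about $g$-expectations under the standard assumptions on $g$: uniqueness of the solution of~\eqref{eq: BSDEg}, and the recursivity (flow property) of conditional $g$-expectations established by Peng, namely $\mathcal{E}^g(\mathcal{E}^g(\xi\vert\mathcal{F}_t)\vert\mathcal{F}_s)=\mathcal{E}^g(\xi\vert\mathcal{F}_s)$ for $s\leq t\leq u$ and $\xi\in L^2(\mathcal{F}_u)$.

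For (a) $\Rightarrow$ (c), fix $X\in L^2(\mathcal{F}_t)$ and $s\leq t\leq u$. The key observation is that, over $[t,u]$, the constant pair $(Y_r,Z_r)=(-X,0)$ solves~\eqref{eq: BSDEg} with terminal datum $-X$: substituting gives $-X = -X + \int_r^u g(\cdot,0)\,d\cdot - 0$, which is an identity precisely because $g(\cdot,0)=0$. Since $-X$ is $\mathcal{F}_t$-measurable, uniqueness yields $\mathcal{E}^g(-X\vert\mathcal{F}_t)=-X$. Feeding this into recursivity gives $\rho_{su}(X)=\mathcal{E}^g(-X\vert\mathcal{F}_s)=\mathcal{E}^g\big(\mathcal{E}^g(-X\vert\mathcal{F}_t)\vert\mathcal{F}_s\big)=\mathcal{E}^g(-X\vert\mathcal{F}_s)$ computed over $[s,t]$, i.e.\ $\rho_{su}(X)=\rho_{st}(X)$, which is the restriction property~\eqref{restriction}.

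For (c) $\Rightarrow$ (b), I would specialize the restriction identity to $X=0$ and $s=t$, obtaining $\rho_{tu}(0)=\rho_{tt}(0)$ for all $u\geq t$. Over the degenerate interval $[t,t]$ the BSDE returns its terminal value, so $\rho_{tt}(0)=\mathcal{E}^g(0\vert\mathcal{F}_t)=0$, whence $\rho_{tu}(0)=0$, which is normalization~\eqref{normalization}. For (b) $\Rightarrow$ (a), normalization means $\mathcal{E}^g(0\vert\mathcal{F}_s)=0$ for every $s\leq u$; fixing the horizon $u$, the $Y$-component of the solution of~\eqref{eq: BSDEg} with terminal datum $0$ then vanishes at every $s\in[0,u]$, so $Y\equiv 0$ as a process. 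Substituting $Y\equiv 0$ back into~\eqref{eq: BSDEg} leaves $\int_s^u g(r,Z_r)\,dr=\int_s^u Z_r\,dB_r$, where the right-hand side is a continuous local martingale and the left-hand side is of finite variation; hence both are constant, forcing $Z\equiv 0$ ($dP\times dr$-a.e.) and then $g(r,0)=0$ ($dP\times dr$-a.e.). As $u\in[0,T]$ is arbitrary, $g(\cdot,0)=0$ on $[0,T]$.

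The main obstacle is the step (b) $\Rightarrow$ (a): it is the only point that is not purely formal. One must first upgrade the pointwise-in-time normalization $\mathcal{E}^g(0\vert\mathcal{F}_s)=0$ to the process identity $Y\equiv 0$, and then invoke uniqueness of the semimartingale decomposition (equivalently, that a continuous finite-variation local martingale is constant) in order to annihilate the $dB$-part and the drift separately. Here it is essential that the standard assumptions (Lipschitz driver, square-integrability of $g(\cdot,0)$) make~\eqref{eq: BSDEg} a well-posed BSDE with $Z\in\Bbb{H}^2_{[0,u]}(\Bbb R^d)$, so that the stochastic integral is a genuine martingale and the decomposition argument applies.
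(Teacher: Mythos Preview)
Your proof is correct and follows essentially the same approach as the paper: the crucial step (b) $\Rightarrow$ (a) via the semimartingale decomposition argument is identical, and your (a) $\Rightarrow$ (c) via recursivity is just a repackaging of the paper's explicit extension of the solution by $(-X,0)$ on $(t,u]$. The only organizational difference is that you argue in a cycle (a) $\Rightarrow$ (c) $\Rightarrow$ (b) $\Rightarrow$ (a), whereas the paper proves (a) $\Leftrightarrow$ (b) and (a) $\Leftrightarrow$ (c) separately, with (c) $\Rightarrow$ (a) passing through normalization exactly as you do.
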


\noindent
\begin{proof}
(a) $\Rightarrow$ (b). This implication follows immediately by remarking that $(Y_t,Z_t)$ with $Y_t=Z_t=0$ for any $t$ is the unique solution when the terminal condition is $0$. See also Peng~\cite{peng97}.\smallskip

\noindent
(b) $\Rightarrow$ (a). Assume that $\rho_{tu}(0)=0$ for any $0\leq t \leq u \leq T$. It follows that
$$
0=Y_t=  \int_t^u g(s,Z_s) ds - \int_t^u Z_s dB_s
$$
holds for any $0\leq t \leq u \leq T$. Hence
$$
\int_t^u g(s,Z_s) ds = \int_t^u Z_s dB_s, \quad \mbox{ for any } 0\leq t \leq u \leq T.
$$
Since a continuous martingale and a process of finite variation can be equal only if the martingale is constant (see Prop. 1.2 in Chapter 4 of Revuz and Yor~\cite{revuz-yor}), then for any $t$ the martingale $M_t (u)=\int_t^u Z_s dB_s$, $u \geq t$, starting from $0$ is identically equal to $0$. Hence $Z_s \equiv 0$ for any $s \geq t$ and $\int_t^u g(s,Z_s) ds \equiv 0$ for any $u \geq t$. By replacing $Z_s$ and deriving this last equation with respect to $u$, it follows that $g(u,0)=0$ for any $u$.

 \noindent
(a) $\Rightarrow$ (c). This implication was proved in Peng~\cite{peng97}. Assume that $X \in L^{2}(\mathcal{F}_u)$ and consider $\rho_{tu} (X)$ and $\rho_{tv} (X)$ for any $v \geq u$. Let us denote by $(Y^{X,u}_r,Z^{X,u}_r)$ (resp. $(Y^{X,v}_r,Z^{X,v}_r)$) the solution corresponding to $\rho_{tu} (X)$ (resp. $\rho_{tv} (X)$) at time $r \leq u$. Since $(Y^{X,v}_r,Z^{X,v}_r)$ with
$$
Y^{X,v}_r= \left\{
\begin{array}{rl}
Y^{X,u}_r;& r \leq u \\
-X;& u < r \leq v
\end{array}
\right. \quad
Z^{X,v} (r,s)= \left\{
\begin{array}{rl}
Z^{X,u}_r;& s \leq u \\
0;& u < s \leq v
\end{array}
\right.
$$
is a solution of $\rho_{tv} (X)$ when $g(t,0)=0$ for any $t \in [0,T]$, the restriction property follows.

\noindent
(c) $\Rightarrow$ (a). By translation invariance and restriction, it holds that for any $X \in L^2 (\mathcal{F}_t) \subseteq L^2 (\mathcal{F}_u)$
$$
-X+\rho_{tT}(0)=\rho_{tT}(X)=\rho_{tu}(X)=-X + \rho_{tu}(0).
$$
Hence $\rho_{tT}(0)=\rho_{tu}(0)$ for any $t \leq u \leq T$. Since $\rho_{tu}$ is induced by the BSDE~\eqref{eq: BSDEg} via~\eqref{eq: rho_BSDEg}, it follows that $\rho_{tt}(0)=0$. Consequently, $\rho_{tu}(0)=0$ for any $t \leq u \leq T$. By this the proof is complete.
\end{proof}

\bigskip
Note that while normalization refers to a single risk measure $\rho_{tu}$, restriction involves the whole family $(\rho_{tu})_{t,u}$.
It is already known that any BSDE satisfies both weak and strong time-consistency (see Barrieu and El Karoui~\cite{barrieu-el-karoui}, El Karoui et al.~\cite{EKPQ}, Bion Nadal and Di Nunno~\cite{bion-nadal-di-nunno}). From the arguments above, any BSDE with a driver $g$ such that $g(t,0)=0$ for any $t \in [0,T]$ satisfies normalization, restriction and strong time-consistency. Thus, it also trivially satisfies h-longevity. \medskip

Here below, we provide some examples where the driver is not normalized and $(\rho_{tu})_{t,u}$ does not satisfies restriction, but only (strict) h-longevity.

\begin{example} \label{ex: BSDE g non normalized}
a) Consider the driver $g(t,z)=a$ for any $t \in [0,T], z \in \mathbb{R}^d$, with $a \in \mathbb{R} \setminus \{0\}$.
It can be checked easily that, for any $t \in [0,T]$ and $X \in L^2(\mathcal{F}_T)$,
\begin{equation*}
\rho_{tT}(X)= E_P \left[\left. -X + \int_t ^T a \, ds\right| \mathcal{F}_t \right]=E_P \left[\left. -X \right| \mathcal{F}_t \right] + (T-t)a.
\end{equation*}
This means that, for $a \neq 0$, $(\rho_{tu})_{t,u}$ is not normalized and does not satisfy the restriction property. Instead, it satisfies h-longevity whenever $a >0$. \medskip

\noindent
b) Consider now the following driver: $g(t,z)=b z +a$ for any $t \in [0,T], z \in \mathbb{R}^d$, with $a, b \in \mathbb{R} \setminus \{0\}$.
By Girsanov Theorem,
\begin{eqnarray*}
-dY_t&= (b Z_t +a) dt- Z_t \, dB_t
&= a \, dt- Z_t \, dB_t^Q
\end{eqnarray*}
where
$E \Big[  \frac{d Q}{dP} \Big| \mathcal{F}_t \Big]= \exp \left\{ - \frac 12 b^2 t + b \cdot B_t \right\}$.
It then follows easily that
\begin{equation*}
\rho_{tT}(X)= E_Q \left[\left. -X \right| \mathcal{F}_t \right] + (T-t)\, a.
\end{equation*}
As before, for $a \neq 0$, $(\rho_{tu})_{t,u}$ is not normalized and does not satisfy the restriction property. Instead, it satisfies h-longevity whenever $a=g(t,0) >0$.
\end{example}

The next result provides a sufficient condition on the driver $g$ for h-longevity of $(\rho_{tu})_{t,u}$.

\begin{proposition} \label{prop: longevity-BSDE}
If $g(v,0) \geq 0$ for any $v \in [0,T]$, then h-longevity holds.
Furthermore, for all $s,u \in [0,T]$, $s \leq u$, $\gamma(s,t,u,X)=E_{\widetilde{Q}_X} \left[\int_t^u g(v,0) dv | \mathcal{F}_s\right]$, $s\leq  t\leq u, X\in L^p(\mathcal{F}_t)$,
where $\widetilde{Q}_X$ is a probability measure on $\mathcal{Q}_{su}$ depending on $X$ equivalent to $P$, with density
\begin{equation*}
\frac{d \widetilde{Q}_X}{dP}= \exp \left\{ - \frac 12 \int_s^u \vert\Delta_z g(v)\vert^2 dv + \int_s^u \Delta_z g(v) dB_v \right\}.
\end{equation*}
Here above $\Delta_z g(v) = (\Delta_z^i g(v))_{i=1,...,d}$ and
\begin{equation*}
\Delta_z^i g(v) \triangleq \frac{g(v,Z_v^u)- g(v,\bar{Z}_v^t)}{d(Z_v^{u,i}- \bar{Z}_v^{t,i})} 1_{\{Z_v^{u,i} \neq \bar{Z}_v^{t,i}\}}.
\end{equation*}
\end{proposition}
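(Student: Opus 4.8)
Here is how I would approach the two assertions. The plan is to establish h-longevity first by reducing it, through the recursive structure of $g$-expectations, to a pointwise comparison, and then to identify the adjustment term $\gamma$ in~\eqref{eq: longevity-gammaX} by differencing the two defining BSDEs and linearizing.

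For h-longevity, recall that risk measures induced by a single BSDE are strongly time-consistent, so by~\eqref{strong tc} and~\eqref{eq: rho_BSDEg} one may write, for $s \leq t \leq u$ and $X \in L^2(\mathcal{F}_t)$,
$$
\rho_{su}(X) = \rho_{st}(-\rho_{tu}(X)) = \mathcal{E}^g(\rho_{tu}(X) \mid \mathcal{F}_s), \qquad \rho_{st}(X) = \mathcal{E}^g(-X \mid \mathcal{F}_s).
$$
By the comparison theorem (monotonicity of the $g$-expectation) it then suffices to show $\rho_{tu}(X) \geq -X$. For this I would compare the BSDE~\eqref{eq: BSDEg} with driver $g$ and terminal $-X$ against the one with driver $g_0(v,z) \triangleq g(v,z) - g(v,0)$ and the same terminal $-X$. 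Since $g_0(v,0)=0$ and $X$ is $\mathcal{F}_t$-measurable, the pair $(-X,0)$ is the unique solution of the latter on $[t,u]$, so its $Y$-value is identically $-X$ (this is precisely the restriction property of Proposition~\ref{prop: norm-restrict-BSDE-g}). The hypothesis $g(v,0)\geq 0$ is equivalent to $g \geq g_0$, and the comparison theorem yields $\rho_{tu}(X) \geq -X$, hence~\eqref{eq: time-value}.

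For the explicit form of $\gamma$ I would difference the two BSDEs on the whole interval $[s,u]$. Let $(Y^u,Z^u)$ solve~\eqref{eq: BSDEg} on $[s,u]$ with terminal $-X$, so $Y_s^u=\rho_{su}(X)$, and let $\bar{Y}$ be the solution associated with $\rho_{st}(X)$ on $[s,t]$ extended by $(-X,0)$ on $(t,u]$, with control $\bar{Z}^t$ understood to vanish on $(t,u]$; then $\bar{Y}$ is continuous at $t$ (both one-sided values equal $-X$) and $\bar{Y}_s=\rho_{st}(X)$. The difference $\delta Y \triangleq Y^u-\bar{Y}$, $\delta Z \triangleq Z^u-\bar{Z}^t$ has zero terminal value, and the generator difference is $g(v,Z_v^u)-g(v,\bar{Z}_v^t)$ on $[s,t]$ and $g(v,Z_v^u)-0=[g(v,Z_v^u)-g(v,0)]+g(v,0)$ on $(t,u]$. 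Writing the increments through the bounded ratio $\Delta_z g(v)$ exactly as in the statement, the difference takes the linear form
$$
\delta Y_r = \int_r^u g(v,0)\,1_{(t,u]}(v)\, dv + \int_r^u \Delta_z g(v)\, \delta Z_v\, dv - \int_r^u \delta Z_v\, dB_v .
$$

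Finally, I would remove the linear $dv$-drift by a Girsanov change of measure: defining $\widetilde{Q}_X$ through the stated density turns $B_v-\int_s^{\cdot}\Delta_z g(r)\,dr$ into a $\widetilde{Q}_X$-Brownian motion, so that $\delta Y_r = \int_r^u g(v,0)1_{(t,u]}(v)\,dv - \int_r^u \delta Z_v\, dB_v^{\widetilde{Q}_X}$; evaluating at $r=s$ and taking $E_{\widetilde{Q}_X}[\,\cdot \mid \mathcal{F}_s]$ gives $\gamma(s,t,u,X)=\delta Y_s = E_{\widetilde{Q}_X}[\int_t^u g(v,0)\,dv \mid \mathcal{F}_s]$, and since the density integrates from $s$ one checks $\widetilde{Q}_X\in\mathcal{Q}_{su}$ with $\widetilde{Q}_X \sim P$. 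The main obstacle is the justification of this last step: one must verify that $\int \delta Z\, dB^{\widetilde{Q}_X}$ is a genuine $\widetilde{Q}_X$-martingale with vanishing conditional expectation. This is where boundedness of $\Delta_z g$ by the Lipschitz constant is essential—it makes the Dol\'eans exponential a true martingale and, together with the $\mathbb{H}^2$-bounds on the controls, supplies the required integrability. Everything else is routine bookkeeping (the jump of $\bar{Z}^t$ at $t$ and the consistent zero-extension used in defining $\Delta_z g$). As a byproduct, the formula re-proves h-longevity: when $g(v,0)\geq 0$ the integrand is nonnegative, so $\gamma\geq 0$.
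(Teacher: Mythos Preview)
Your proposal is correct. The identification of $\gamma$ via differencing the two BSDEs, linearizing through the bounded ratios $\Delta_z g$, and removing the drift by Girsanov is exactly the paper's argument; you are also right that the Lipschitz bound on $g$ is what makes $\Delta_z g$ bounded and hence the Dol\'eans exponential a true martingale (the paper handles this by invoking Proposition~2.2 of \cite{EKPQ} for linear BSDEs rather than spelling out the integrability check).

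The one genuine difference is in how you establish h-longevity itself. The paper does not use a separate argument: it reads off $\delta\rho_s\geq 0$ directly from the linear BSDE~\eqref{eq: linear BSDE-longevity}, since the terminal value $\Gamma^{t,u}=\int_t^u g(v,0)\,dv$ is nonnegative and the driver is linear, so positivity follows from the standard representation of linear BSDEs. You instead exploit strong time-consistency of the single-BSDE family to reduce h-longevity to the pointwise inequality $\rho_{tu}(X)\geq -X$, which you obtain by comparing $g$ with the normalized driver $g_0=g-g(\cdot,0)$. Your route is a bit more conceptual and isolates h-longevity from the computation of $\gamma$; the paper's route is more economical, since the linearization is needed anyway for the explicit formula and delivers h-longevity as a free byproduct (as you yourself note at the end).
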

The probability measure ${\widetilde{Q}_X}$ here above can be interpreted as an {\it h-longevity premium measure}.

\smallskip
\noindent
\begin{proof}
Let $s \leq t \leq u$ and $X \in L^p (\mathcal{F}_t)$. The risk measures
$\rho_{st}(X)$ and $\rho_{su}(X)$ satisfy the following BSDEs:
\begin{eqnarray*}
\rho_{st}(X)&=& -X+ \int_s^t g(v,Z_v^t) dv - \int_s^t Z_v ^t dB_v \\
\rho_{su}(X)&=& -X+ \int_s^u g(v,Z_v^u) dv - \int_s^u Z_v ^u dB_v,
\end{eqnarray*}
respectively. Set now the $\Bbb R^d$-valued process
\begin{equation*}
\bar{Z}_v^t=\left\{
\begin{array}{rl}
Z_v^t;& v \leq t \\
0;& t<v \leq u
\end{array}
\right. ;
\quad \widetilde{Z}_v= Z_v^u - \bar{Z}_v^t.
\end{equation*}
Then
\begin{eqnarray}
&&\rho_{su}(X)-\rho_{st}(X) \notag\\
&&=  \int_s^u \hspace{-1mm}[g(v,Z_v^u)- g(v,\bar{Z}_v^t)] dv
+ \hspace{-1mm} \int_t^u \hspace{-1mm}g(v,\bar{Z}_v^t) dv
- \hspace{-1mm}\int_s^u \hspace{-1mm} [Z_v ^u- \bar{Z}_v^t] dB_v
- \hspace{-1mm}\int_t^u \hspace{-1mm} \bar{Z}_v ^t dB_v \notag \\
&&=  \int_s^u [g(v,Z_v^u)- g(v,\bar{Z}_v^t)] dv - \int_s^u \widetilde{Z}_v dB_v+ \int_t^u g(v,0) dv  \notag \\
&&=  \int_s^u \Delta_z g(v) \cdot \widetilde{Z}_v dv - \int_s^u \widetilde{Z}_v dB_v+ \int_t^u g(v,0) dv .\label{eq: bsde-longevity-1}
\end{eqnarray}
Furthermore,~\eqref{eq: bsde-longevity-1} can be rewritten also as
\begin{equation} \label{eq: linear BSDE-longevity}
\delta \rho_s= \Gamma^{t ,u} + \int_s^u \Delta_z g(v) \cdot \widetilde{Z}_v dv - \int_s^u \widetilde{Z}_v dB_v,
\end{equation}
where $\delta \rho_s \triangleq \rho_{su}(X)-\rho_{st}(X)$ and $\Gamma^{t,u} \triangleq \int_t^u g(v,0) dv$ represents the final condition at time $u$ (which depends on $t$ but not on $s$) of the linear BSDE~\eqref{eq: linear BSDE-longevity}.

Since $\Gamma^{t,u}\geq 0$ for any $t$ by hypothesis and $\Delta_z g(v) \in \Bbb H^2_{[s,u]} (\Bbb R^d)$ by the assumption of $g$ Lipschitz in $z$, by Prop. 2.2 of El Karoui, Peng and Quenez~\cite{EKPQ} it follows that $\delta \rho_s \geq 0$ for any $s \leq t$.

By applying Girsanov Theorem,~\eqref{eq: bsde-longevity-1} becomes
\begin{eqnarray*}
\rho_{su}(X)-\rho_{st}(X)
&=&  \int_s^u \Delta_z g(v) \cdot \widetilde{Z}_v dv - \int_s^u \widetilde{Z}_v dB_v+ \int_t^u g(v,0) dv \\
&=& - \int_s^u \widetilde{Z}_v dB^{\widetilde{Q}_X}_v+ \int_t^u g(v,0) dv,
\end{eqnarray*}
where $B^{\widetilde{Q}_X}_v \triangleq B_v - B_s - \int_{s}^v \Delta_z g(r) \, dr$, $v \in [s,u]$, is a $\widetilde{Q}_X$-Brownian motion. Hence, by taking the conditional expectation with respect to $\widetilde{Q}_X$,
\begin{eqnarray*}
\rho_{su}(X)-\rho_{st}(X)
&=&  E_{\widetilde{Q}_X} \left[ - \int_s^u \widetilde{Z}_v dB^{\widetilde{Q}_X}_v+ \int_t^u g(v,0) dv \Big\vert \mathcal{F}_s\right]\\
&=&E_{\widetilde{Q}_X} \left[\int_t^u g(v,0) dv \Big\vert \mathcal{F}_s\right].
\end{eqnarray*}
By assumption on $g(\cdot, 0)$, it follows that $\rho_{su}(X)-\rho_{st}(X) \geq 0$ and that $\gamma(s,t,u,X)=\rho_{su}(X)-\rho_{st}(X)=E_{\widetilde{Q}_X} \left[\int_t^u g(v,0) dv | \mathcal{F}_s\right]$.
\end{proof}
\medskip

As discussed in Section~\ref{sec: longevity}, $\gamma$ may depend on the time horizon, that is, $\gamma (s,t,u,X)=\gamma_{s,t}(h,X)$ with $h=u-t$ or, even, $\gamma_{s,t}(h)$ independent from $X$. The following example provides some cases covering the situation above.

\begin{example} \label{ex: longevity-bsde}
Let $g(v,0) \geq 0$ for any $v \in [0,T]$. Hence, by the result above, h-longevity holds and $\gamma(s,t,u,X)=E_{\widetilde{Q}_X} \left[\int_t^u g(v,0) dv | \mathcal{F}_s\right]$ for any $X \in L^2(\mathcal{F}_t)$.\smallskip

\noindent
a) If $g(v,0)=c$ for any $v \in [0,T]$, with $c\geq 0$, then $c$ is necessarily deterministic (since it should be measurable for any $v \geq 0$) and, consequently,
\begin{equation*}
\gamma(s,t,u,X)=E_{\widetilde{Q}_X} \left[\int_t^u g(v,0) dv \Big\vert \mathcal{F}_s\right]=(u-t)c.
\end{equation*}
 Hence $\gamma$ only depends on $h=u-t$, that is, roughly speaking, on the length of the time interval over which there is an uncorrect use of the risk measure ($\rho_{su}$ versus $\rho_{st}$).\smallskip

\noindent
b) If $g(v,0)=\exp(-r \, v)$ for any $v \in [0,T]$, with $r\geq 0$, then $r$ is necessarily deterministic (for the same arguments as above) and, consequently,
\begin{equation*}
\gamma(s,t,u,X)=\frac{e^{-rt} \left( 1-e^{-r (u-t)} \right)}{r}
\end{equation*}
In other words, $\gamma$ only depends on the ``right'' time horizon $t$ (referring to the measurability of $X$) and on the length of the time interval $[t,u]$.
\end{example}

\subsection{Risk measures generated by a family of BSDEs}

Now we consider general risk measures induced by a family of BSDEs of type~\eqref{eq: BSDE} with drivers $\mathcal{G}=(g_u)_{u \in [0,T]}$ depending on the time horizon $u$ of $\rho_{tu}$.
While the focus of this paper is to work with BSDEs where the driver is independent on $y$ (leading to cash-invariant risk measures), we still introduce some concepts and results for the general case which serves the cash-subadditive case of El Karoui and Ravanelli \cite{EK-rav}.

For later use, by \textit{increasing family} $\mathcal{G}=(g_u)_{u \in [0,T]}$ of BSDE drivers it is meant, for any $t \leq u$,
\begin{equation*} 
g_t(v,y,z) \leq g_u(v,y,z)  \mbox{ for any } v \in [0,t], y \in \Bbb R, z \in \Bbb R^d.
\end{equation*}

Suppose that, for any $t \leq u$, the risk measure $\rho_{tu}$ comes from a $g_u$-expectation with a driver depending on the maturity $u$. This means that
\begin{equation} \label{eq: rho-from-bsde-gt}
\rho_{tu}(X)=\rho_{tu}^{\mathcal{G}}(X)=\mathcal{E}^{g_u} (-X \vert \mathcal{F}_t), \mbox{ for any } X \in L^{2} (\mathcal{F}_u).
\end{equation}

Assume now that $(g_u)_{u \in [0,T]}$ is a family of drivers depending on the maturity $u$, independent of $y$, Lipschitz, and convex in $z$. Then the risk measure $(\rho^{\mathcal{G}}_{tu})_{t,u}$ defined by~\eqref{eq: BSDEg},~\eqref{eq: rho-from-bsde-gt} satisfies monotonicity, convexity, continuity from above/below, and translation invariance. Furthermore, for $u \in [0,T]$, if $g_u(v,0)=0$ for any $v \leq u$, then $\rho^{\mathcal{G}}_{tu} (0)=0$ for any $t \leq u$. In general, however, $g_u(v,0)=0$ for any $v \leq u$ \textit{does not} imply the restriction property.
\bigskip

In a Brownian setting, $\rho_{tu}^{\mathcal{G}}$ can be represented as
\begin{equation*} 
\rho_{tu}(X)=\rho_{tu}^{\mathcal{G}}(X)=\underset{Q \in \mathcal{Q}_{tu}}{\esssup} \, \left\{ E_Q \left[-X \vert \mathcal{F}_t \right] -\alpha_{tu}^{\mathcal{G}} (Q) \right\}, \quad X \in L^{2} (\mathcal{F}_u),
\end{equation*}
where $\mathcal{Q}_{tu}$ is defined in~\eqref{eq: Q_tu - gt}
and the penalty term $\alpha_{tu}^{\mathcal{G}}$ associated to $\rho_{tu}^{\mathcal{G}}$ is given by
\begin{equation*}
\alpha_{tu}^{\mathcal{G}}(Q)=E_Q \left[\left. \int_t ^u g^*_u (v,q_v) dv \right| \mathcal{F}_t \right],
\end{equation*}
where $g^*_u$ denotes the convex conjugate of the function $g_u$. See Delbaen et al.~\cite{DPR}.\bigskip

The following result shows that, for $g_u(r,0)=0$ for any $r, u$, the restriction property is satisfied only for risk measures induced by a BSDE with a single driver $g$ (that is, constant with respect to the maturity time).

\begin{proposition} \label{prop: restriction-BSDE-family}
For any $u \in [0,T]$, let $g_u(v,0)=0$ for any $v \leq u$.

The restriction property~\eqref{restriction} holds if and only if $g_u$ is constant in $u$.
\end{proposition}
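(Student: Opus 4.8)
The plan is to prove both implications, with the forward direction ($\Rightarrow$) being the substantial one. For the easy direction, I would observe that if $g_u$ does not depend on $u$, say $g_u=g$ for every $u$, then the whole family $(\rho_{tu})_{t,u}$ is generated by the single BSDE with driver $g$, and the standing hypothesis $g_u(v,0)=0$ reads $g(v,0)=0$ for all $v$. Proposition~\ref{prop: norm-restrict-BSDE-g}, through the equivalence (a) $\Leftrightarrow$ (c), then delivers the restriction property at once.

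For the converse, I would assume restriction and fix $s \le t \le u$ together with a terminal condition $-X$, $X \in L^2(\mathcal{F}_t)$. The first step is to reduce $\rho_{su}(X)=\mathcal{E}^{g_u}(-X\vert\mathcal{F}_s)$, whose defining BSDE a priori runs on $[s,u]$, to a BSDE on the shorter interval $[s,t]$. On $[t,u]$ the pair $(Y_r,Z_r)=(-X,0)$ is adapted, since $X$ is $\mathcal{F}_t$-measurable and hence $\mathcal{F}_r$-measurable for $r\ge t$, and it solves the equation precisely because $g_u(v,0)=0$; by uniqueness it is the solution, so $Y_t=-X$ and $Z\equiv 0$ on $[t,u]$. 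Consequently $\rho_{su}(X)$ coincides with the value at $s$ of the $g_u$-BSDE on $[s,t]$ with terminal datum $-X$. Since $\rho_{st}(X)=\mathcal{E}^{g_t}(-X\vert\mathcal{F}_s)$ is computed over the very same interval, the restriction property is equivalent to saying that the two conditional $g$-expectations, with drivers $g_t$ and $g_u$, agree for every $\mathcal{F}_t$-measurable terminal condition and every $s\le t$.

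At this point I would invoke the converse comparison theorem for BSDEs (see, e.g.,~\cite{jiang}): if $\mathcal{E}^{g_t}(\cdot\vert\mathcal{F}_s)$ and $\mathcal{E}^{g_u}(\cdot\vert\mathcal{F}_s)$ coincide on $L^2(\mathcal{F}_t)$ for all $s\le t$, then $g_t(v,z)=g_u(v,z)$ for $dP\times dv$-a.e. $v\in[0,t]$ and every $z\in\mathbb{R}^d$. Finally, given arbitrary horizons $u_1,u_2$ and a.e. $v\le\min(u_1,u_2)$, applying this conclusion with $t=\min(u_1,u_2)$ yields $g_{u_1}(v,z)=g_t(v,z)=g_{u_2}(v,z)$, so on their common domain all the drivers coincide, i.e. $g_u$ is independent of $u$.

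The main obstacle I anticipate is the clean invocation of the converse comparison theorem: I must check that its hypotheses are met in the present form, namely the standard Lipschitz assumptions and the fact that the comparison is carried out over $[s,t]$ with terminal time $t$ (rather than the fixed horizon $T$), and that the recovered equality of drivers indeed holds $dP\times dv$-a.e. on the relevant interval and for every $z$. The reduction step, by contrast, is routine once one notes that $g_u(\cdot,0)=0$ renders the solution trivial on $[t,u]$, and the final patching across horizons is elementary.
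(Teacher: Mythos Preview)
Your proposal is correct and follows essentially the same route as the paper: the easy direction via Proposition~\ref{prop: norm-restrict-BSDE-g}, and the converse via the converse comparison theorem of Briand et al.~\cite{BCHMPeng}/Jiang~\cite{jiang}. The only cosmetic difference is that the paper applies the explicit driver-recovery limit $g_u(s,z)=\lim_{\varepsilon\to 0}\varepsilon^{-1}\rho_{su}(-z\cdot(B_{s+\varepsilon}-B_s))$ directly (which already uses $g_u(\cdot,0)=0$), whereas you first reduce to a common terminal time $t$ and then invoke the converse comparison theorem as a black box; both packagings of the same argument are fine.
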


\noindent
\begin{proof}
If $g_u$ is constant in $u$, the restriction property follows directly by Proposition~\ref{prop: norm-restrict-BSDE-g}.

Conversely, assume that the restriction property holds, i.e. $\rho_{tu}(X)=\rho_{tv}(X)$ for any $t \leq u \leq v$ and $X \in L^2(\mathcal{F}_u)$.
Proceeding as in the proof of the Converse Comparison Theorem of Briand et al.~\cite{BCHMPeng}, Thm. 4.1, and Jiang~\cite{jiang}, Lemma 2.1,
\begin{eqnarray*}
g_u(s,z)=\lim _{\varepsilon \to 0} \frac{\rho_{su}(-z \cdot (B_{s + \varepsilon} -B_{\varepsilon}))}{\varepsilon}\\
g_v(s,z)=\lim _{\varepsilon \to 0} \frac{\rho_{sv}(-z \cdot (B_{s + \varepsilon} -B_{\varepsilon}))}{\varepsilon}
\end{eqnarray*}
with convergence in $L^p$ with $p \in [1,2)$, for any $z \in \Bbb R^d$, $u \leq v$ and $s \in [0,u]$. By extracting a subsequence to obtain convergence $P$-a.s. and passing to the limit as $\varepsilon \to 0$, it holds that
\begin{equation*}
\frac{\rho_{sv}(-z (B_{s + \varepsilon} -B_{\varepsilon}))}{\varepsilon}=\frac{\rho_{su}(-z (B_{s + \varepsilon} -B_{\varepsilon}))}{\varepsilon} \longrightarrow g_u(s,z), \quad
\epsilon \to 0, \quad  P-a.s.
\end{equation*}
where the equality is due to restriction. The thesis then follows because
\begin{equation*}
\frac{\rho_{sv}(-z (B_{s + \varepsilon} -B_{\varepsilon}))}{\varepsilon} \longrightarrow  g_v(s,z),  \quad
\epsilon \to 0, \quad  P-a.s.\end{equation*}
By this we end the proof.
\end{proof}

\bigskip
As done in Example~\ref{ex: BSDE g non normalized} for a single driver, we provide here below some examples where restriction fails.

\begin{example} \label{ex: BSDE gu non normalized}
a) Consider the driver $g_u(t,z)=a_u$ for any $t \in [0,u], z \in \mathbb{R}^d$, with $a_u \in \mathbb{R} \setminus \{0\}$ depending on the maturity $u$.
It can be checked easily that, for any $t \in [0,T]$ and $X \in L^2(\mathcal{F}_T)$,
\begin{equation*}
\rho_{tu}(X)= E_P \left[\left. -X \right| \mathcal{F}_t \right] + (u-t)a_u.
\end{equation*}
This means that, for $a_u \neq 0$, $(\rho_{tu})_{t,u}$ is not normalized and does not satisfy the restriction property. Instead, it satisfies h-longevity whenever $a_u >0$ is increasing in $u$. \medskip

\noindent
b) Consider now the following driver: $g_u(t,z)=b z +a_u$ for any $t \in [0,u], z \in \mathbb{R}^d$, with $a_u, b \in \mathbb{R} \setminus \{0\}$ and $a_u$ depending on the maturity $u$.
Similarly to Example~\ref{ex: BSDE g non normalized}, it follows that
\begin{equation*}
\rho_{tu}(X)= E_Q \left[\left. -X \right| \mathcal{F}_t \right] + (u-t)a_u,
\end{equation*}
where $E \left[ \left. \frac{d Q}{dP} \right| \mathcal{F}_t \right]= \exp \left\{ - \frac 12 b^2 t + b \cdot B_t \right\}$.
As before, therefore, for $a_u \neq 0$, $(\rho_{tu})_{t,u}$ is not normalized and does not satisfy the restriction property. Instead, it satisfies h-longevity whenever $a_u >0$ is increasing in $u$.
\end{example}

\subsubsection{Time-consistency}

Thanks to the Comparison Theorem (see El Karoui et al.~\cite{EKPQ}) and to the Converse Comparison Theorem (see Briand et al.~\cite{BCHMPeng}), the following result establishes that monotonicity of the family $\mathcal{G}=(g_t)_{t \in [0,T]}$ is equivalent to sub time-consistency.

\begin{theorem} \label{prop: sub-tc-bsde-gt}
Let $(\rho_{tu})_{t,u}$ be induced by a BSDE with a family of drivers $\mathcal{G}=(g_t)_{t \in [0,T]}$ as in~\eqref{eq: rho-from-bsde-gt}.

\noindent
a) The family $\mathcal{G}$ is increasing if and only if $(\rho_{tu})_{t,u}$ satisfies sub time-consistency.

\noindent
b) $\mathcal{G}= \{ g \}$  if and only if $(\rho_{tu})_{t,u}$ satisfies strong time-consistency.
\end{theorem}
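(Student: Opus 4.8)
The plan is to prove the two equivalences in Theorem~\ref{prop: sub-tc-bsde-gt} by combining the Comparison Theorem and the Converse Comparison Theorem for BSDEs with the penalty-function characterization of sub time-consistency from Proposition~\ref{prop: charact-sub time-cons}(iii), together with the cocycle characterization of strong time-consistency~\eqref{eq: cocycle}. For part (a), recall that since each $\rho_{tu}$ is generated by the $g_u$-expectation, its minimal penalty is $\alpha^{\mathcal{G}}_{tu}(Q)=E_Q\big[\int_t^u g^*_u(v,q_v)\,dv\,\big|\,\mathcal{F}_t\big]$. The key observation is that sub time-consistency is characterized by $\alpha_{su}(Q)\le\alpha_{st}(Q|_{\mathcal{F}_t})+E_Q[\alpha_{tu}(Q)|\mathcal{F}_s]$; when we substitute the explicit penalties, the terms $\int_s^t g^*_u(v,q_v)\,dv$ and $\int_t^u g^*_u(v,q_v)\,dv$ from the left-hand side must be dominated by $\int_s^t g^*_t(v,q_v)\,dv$ and $\int_t^u g^*_u(v,q_v)\,dv$ from the right, so after cancelling the common $[t,u]$-piece, the inequality reduces to $g^*_u(v,\cdot)\le g^*_t(v,\cdot)$ on $[s,t]$. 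By convex duality, $g_t\le g_u$ (the increasing-family condition) is equivalent to $g^*_u\le g^*_t$, which closes the loop for part (a).

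A cleaner and more self-contained route for the ``$\Rightarrow$'' direction of part (a) is to argue directly with the Comparison Theorem at the level of the BSDE dynamics rather than through penalties. Given $X\in L^2(\mathcal{F}_u)$ and $s\le t\le u$, one writes $\rho_{su}(X)=\mathcal{E}^{g_u}(-X|\mathcal{F}_s)$, and compares it with the two-step evaluation $\rho_{st}(-\rho_{tu}(X))=\mathcal{E}^{g_t}\big(\mathcal{E}^{g_u}(-X|\mathcal{F}_t)\,\big|\,\mathcal{F}_s\big)$. On the interval $[s,t]$ both processes are BSDEs with the same terminal value $\mathcal{E}^{g_u}(-X|\mathcal{F}_t)$ at time $t$, but with drivers $g_u$ and $g_t$ respectively; since $g_t\le g_u$ on $[0,t]$, the Comparison Theorem (El Karoui et al.~\cite{EKPQ}) yields $\mathcal{E}^{g_t}(\cdot|\mathcal{F}_s)\le\mathcal{E}^{g_u}(\cdot|\mathcal{F}_s)$, i.e. $\rho_{st}(-\rho_{tu}(X))\le\rho_{su}(X)$, which is exactly sub time-consistency. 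The converse ``$\Leftarrow$'' direction uses the Converse Comparison Theorem: assuming sub time-consistency for all $X$, one isolates $g_t$ and $g_u$ at a fixed time $s<t<u$ via the same infinitesimal-increment limit $g_u(s,z)=\lim_{\varepsilon\to0}\varepsilon^{-1}\rho_{su}(-z\cdot(B_{s+\varepsilon}-B_s))$ used in Proposition~\ref{prop: restriction-BSDE-family}, and the sub-consistency inequality passes to the limit to give $g_t(s,z)\le g_u(s,z)$.

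For part (b), the ``$\Leftarrow$'' direction is immediate: if $(\rho_{tu})_{t,u}$ is strongly time-consistent then it is in particular sub time-consistent with equality, so by part (a) the family is increasing; applying sub time-consistency to the reversed inequality (which strong time-consistency also grants) forces $g_t\ge g_u$ as well, hence $g_t=g_u$ for all $t\le u$, i.e. $\mathcal{G}=\{g\}$. The ``$\Rightarrow$'' direction is the classical fact that a single-driver BSDE induces a strongly (recursively) time-consistent family, which follows from the flow property / uniqueness of BSDE solutions: splitting the BSDE on $[s,u]$ at the intermediate time $t$ and using that the $Y$-component at $t$ is precisely $\mathcal{E}^g(-X|\mathcal{F}_t)$ gives $\mathcal{E}^g(-X|\mathcal{F}_s)=\mathcal{E}^g\big(\mathcal{E}^g(-X|\mathcal{F}_t)|\mathcal{F}_s\big)$, which is~\eqref{strong tc}; this is already cited in the text as a known property of BSDEs.

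The main obstacle I anticipate is the converse direction of part (a): the infinitesimal representation of the driver is an $L^p$-limit, so one must extract an a.s.-convergent subsequence and carefully verify that the sub time-consistency inequality is preserved under this limiting procedure, uniformly enough in $z$ to recover the pointwise driver inequality $g_t(s,z)\le g_u(s,z)$ for $dP\times dt$-a.e.\ $(s,\omega)$ and every $z$. Controlling the approximation error so that the one-sided inequality (rather than an equality) survives the limit is the delicate technical point, whereas the forward direction via Comparison and the entirety of part (b) are comparatively routine.
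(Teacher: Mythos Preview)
Your penalty-function route in the first paragraph is valid and is in fact exactly the alternative the authors point out in the Remark following the theorem. So the proof is complete once you commit to that approach: Proposition~\ref{prop: charact-sub time-cons}(iii) is an equivalence, substituting $\alpha^{\mathcal{G}}_{tu}(Q)=E_Q[\int_t^u g^*_u(v,q_v)\,dv\,|\,\mathcal{F}_t]$ and cancelling the common $[t,u]$-integral reduces the penalty inequality to $E_Q[\int_s^t (g^*_u-g^*_t)(v,q_v)\,dv\,|\,\mathcal{F}_s]\le 0$ for all $Q$, and since $q$ can be localized arbitrarily this is equivalent to the pointwise inequality $g^*_u\le g^*_t$ on $[0,t]$, i.e.\ $g_t\le g_u$.

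The paper's own argument is organized differently and is worth knowing because it dissolves the obstacle you flag. The authors observe that the composed evaluation $\rho_{st}(-\rho_{tu}(X))$ is itself the solution of a \emph{single} BSDE on $[s,u]$ with terminal value $-X$ at $u$ and glued driver
\[
\bar g(v,z)=g_t(v,z)\,1_{[0,t]}(v)+g_u(v,z)\,1_{(t,u]}(v),
\]
so that $\rho_{st}(-\rho_{tu}(X))=\mathcal{E}^{\bar g}(-X|\mathcal{F}_s)$ while $\rho_{su}(X)=\mathcal{E}^{g_u}(-X|\mathcal{F}_s)$. Both are now BSDEs on the same interval $[s,u]$ with the same terminal condition, and the sub time-consistency inequality becomes $\mathcal{E}^{\bar g}(-X|\mathcal{F}_s)\le \mathcal{E}^{g_u}(-X|\mathcal{F}_s)$ for all $X$. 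The forward direction is the Comparison Theorem applied to $\bar g\le g_u$; the converse is the Converse Comparison Theorem of Briand et al.\ applied as a black box, giving $\bar g\le g_u$, i.e.\ $g_t\le g_u$ on $[0,t]$. Part~(b) follows by the same gluing with equalities.

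The practical difference is that the gluing trick lets you invoke the Converse Comparison Theorem directly instead of rerunning its proof via the infinitesimal limit $g_u(s,z)=\lim_{\varepsilon\to 0}\varepsilon^{-1}\rho_{su}(-z\cdot(B_{s+\varepsilon}-B_s))$. Your proposed converse via this limit is more fragile than you indicate: the formula in Proposition~\ref{prop: restriction-BSDE-family} is established under the normalization $g_u(v,0)=0$, which the present theorem does not assume. Without it, $\rho_{su}(X_\varepsilon)$ carries a nonvanishing contribution $\int_{s+\varepsilon}^u g_u(v,0)\,dv$, and likewise $\rho_{st}(-\rho_{tu}(X_\varepsilon))=\rho_{st}(X_\varepsilon-\rho_{tu}(0))$ involves the nonzero, $\mathcal{F}_t$-measurable term $\rho_{tu}(0)$, so the raw quotients do not converge to $g_u(s,z)$ and $g_t(s,z)$. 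One can repair this by subtracting the appropriate normalizing terms on both sides, but the paper's gluing avoids the issue entirely; since your penalty argument already closes the converse cleanly, there is no need to pursue the limit route.
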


\noindent
\begin{proof}
a) By definition~\eqref{eq: rho-from-bsde-gt}, sub time-consistency of $(\rho_{tu})_{t,u}$ can be written as
\begin{equation} \label{eq: sub tc-proof}
\mathcal{E}^{g_t} (\mathcal{E}^{g_u} (-X \vert \mathcal{F}_t) \vert \mathcal{F}_s) \leq \mathcal{E}^{g_u} (-X \vert \mathcal{F}_s)
\end{equation}
for any $s \leq t \leq u$ and $X \in L^2(\mathcal{F}_u)$.

By definition of $\mathcal{E}^{g_t}$, the left-hand and right-hand sides of the previous equation can be rewritten as follows:
\begin{eqnarray*}
&&\mathcal{E}^{g_t} (\mathcal{E}^{g_u} (-X \vert \mathcal{F}_t) \vert \mathcal{F}_s) \\
=&& \mathcal{E}^{g_u} (-X \vert \mathcal{F}_t)+ \int_s ^t g_t(v,Z_v) \, dv - \int_s ^t Z_v \, dB_v \\
=&& -X +\int_t ^u g_u(v,\tilde{Z}_v) \, dv - \int_t^u \tilde{Z}_v \, dB_v +\int_s ^t g_t(v,Z_v) \, dv -\int_s ^t  Z_v \, dB_v \\
\end{eqnarray*}
and
\begin{equation}  \label{eq: sub tc-proof-2}
\mathcal{E}^{g_u} (-X \vert \mathcal{F}_s)  = -X +\int_s ^u g_u(v,\hat{Z}_v) \, dv - \int_s^u \hat{Z}_v \, dB_v.
\end{equation}
Set now
\begin{equation*}
\bar{g}(v,z)= \left\{
\begin{array}{rl}
g_t(v,z);& \quad v \in [0,t] \\
g_u(v,z);& \quad v \in (t,u]
\end{array}
\right.
\quad
\bar{Z}_v= \left\{
\begin{array}{rl}
Z_v;& \quad v \in [0,t] \\
\tilde{Z}_v;& \quad v \in (t,u]
\end{array}
\right.
\end{equation*}
By the arguments above, the left-hand side of~\eqref{eq: sub tc-proof} becomes
\begin{equation} \label{eq: sub tc-proof-3}
\mathcal{E}^{g_t} (\mathcal{E}^{g_u} (-X \vert \mathcal{F}_t) \vert \mathcal{F}_s) = -X +\int_s ^u \bar{g}(v,\bar{Z}_v) \, dv - \int_s^u \bar{Z}_v \, dB_v .
\end{equation}
On the one hand, if $\bar{g} \leq g_u$ on $[0,u]$ (or, equivalently, $g_t \leq g_u$ on $[0,t]$), then by the Comparison theorem of BSDE (see El Karoui et al.~\cite{EKPQ}) sub time-consistency~\eqref{eq: sub tc-proof} is satisfied.

On the other hand, by~\eqref{eq: sub tc-proof-2} and~\eqref{eq: sub tc-proof-3} and Converse Comparison Theorem of Briand et al.~\cite{BCHMPeng}, sub time-consistency~\eqref{eq: sub tc-proof} implies that $\bar{g} \leq g_u$ on $[0,u]$, hence $g_t \leq g_u$ on $[0,t]$.\smallskip

\noindent
b) The case of time-consistency can be obtained by replacing inequalities with equalities in the proof above.
\end{proof}

\smallskip
\noindent
In other words, the result above guarantees that strong time-consistency is fulfilled if and only if the family of drivers reduces to a singleton, that is, the drivers $g_t$ do not depend on the maturity $t$.

\begin{remark}
 In the previous result, the proof of item a) can be done also by means of the penalty term characterization. In this case, one could use
 \begin{equation*}
 \alpha_{su}^{\mathcal{G}}(Q) = E_Q \left[\left. \int_s ^u g^*_u (v,q_v) dv \right| \mathcal{F}_s \right]
 \end{equation*}
and the fact that, by the Fenchel-Moreau biconjugate Theorem, the condition $g^* _t \geq g^* _u$ is equivalent to $g _t \leq g _u$. Indeed, if $g^* _t \geq g^* _u$ then
\begin{equation*}
g_t(s,z)= \sup_{ q \in \Bbb R^d} \{q \cdot z- g_t ^*(s,q) \} \leq \sup_{ q \in \Bbb R^d } \{q \cdot z- g_u ^*(s,q) \}= g_u(s,z).
\end{equation*}
The converse implication can be checked similarly.
\end{remark}

\subsubsection{H-longevity and families of BSDEs}
In order to investigate h-longevity, we need to compare BSDEs with different horizons. Hereafter, we suggest a comparison theorem for this. Note that in this result we consider more general BSDEs~\eqref{eq: BSDE} with driver depending on $y$, as the result is of interest well beyond the use of BSDEs in risk measures modelling.

Let $T_1, T_2>0$ be two different time horizon with $T_2 > T_1$. Let $(Y_t ^{T_i, \xi _i}, Z_t ^{T_i, \xi _i})_{t \in [0,T_1]}$ (for $i=1,2$) be the solution of a BSDE with final condition $\xi _i \in L^{2} (\mathcal{F}_{T_i})$ and driver $g^{T_i}$ satisfying the usual assumptions. More precisely,
\begin{equation} \label{eq: BSDE - different T}
Y_t ^{T_i}= \xi _i + \int_t ^{T_i} g^{T_i}(s, Y_s ^{T_i}, Z_s ^{T_i}) ds- \int_t ^{T_i} Z_s ^{T_i} dB_s.
\end{equation}
In the terminology of Peng~\cite{peng97}, $Y_t ^{T_i}= \mathcal{E}^{g^{T_i}} (\xi_i \big| \mathcal{F}_t)$.

\begin{theorem} \label{prop: extension-comparison_BSDE} (Horizon Comparison Theorem)
If $g^{T_2}(s,y,z) \geq g^{T_1}(s,y,z)$ for any $s \in [0,T_1], y \in \Bbb R,z \in \Bbb R^d$ and $g^{T_2}(s,y,z) \geq 0$ for any $s \in [T_1, T_2], y \in \Bbb R,z \in \Bbb R^d$ and $\xi_2 \geq \xi_1$, then $Y_t ^{T_2} \geq Y_t ^{T_1}$ for any $t \in [0,T_1]$ and $Y_t ^{T_2} \geq \xi_1$ for any $t \in [T_1, T_2]$.
\end{theorem}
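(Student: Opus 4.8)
The plan is to reduce this two-horizon comparison to a \emph{single} application of the classical Comparison Theorem on the common interval $[0,T_2]$. The only real obstacle is that the two BSDEs are posed on intervals of different length, so a direct comparison is not immediately available. I would overcome this by extending the shorter-horizon solution $(Y^{T_1},Z^{T_1})$ to all of $[0,T_2]$ in such a way that it still solves a (degenerate) BSDE, and then comparing two genuine $[0,T_2]$-BSDEs whose data are ordered by the hypotheses. The crux is to choose the extension so that the assumption $g^{T_2}\geq 0$ on $[T_1,T_2]$ translates \emph{exactly} into the driver-domination hypothesis of the comparison theorem.

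Concretely, I would set, on $[0,T_2]$,
$$\tilde Y_t = \begin{cases} Y_t^{T_1}, & t \in [0,T_1], \\ \xi_1, & t \in (T_1,T_2], \end{cases} \qquad \tilde Z_t = \begin{cases} Z_t^{T_1}, & t \in [0,T_1], \\ 0, & t \in (T_1,T_2], \end{cases}$$
together with the extended driver
$$\tilde g(s,y,z) = \begin{cases} g^{T_1}(s,y,z), & s \in [0,T_1], \\ 0, & s \in (T_1,T_2]. \end{cases}$$
First I would check that $\tilde g$ satisfies the standard assumptions on $[0,T_2]$: it is adapted, it inherits the Lipschitz constant of $g^{T_1}$ (the zero function being trivially Lipschitz), and $E\big[\int_0^{T_2}|\tilde g(s,0,0)|^2\,ds\big]=E\big[\int_0^{T_1}|g^{T_1}(s,0,0)|^2\,ds\big]<\infty$. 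I would likewise note $(\tilde Y,\tilde Z)\in \Bbb H^2_{[0,T_2]}(\Bbb R)\times \Bbb H^2_{[0,T_2]}(\Bbb R^d)$, since on $(T_1,T_2]$ these processes reduce to the square-integrable, $\mathcal{F}_{T_1}$-measurable constant $\xi_1$ and to $0$; adaptedness on that interval is automatic because $\xi_1\in L^2(\mathcal{F}_{T_1})$.

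The central bookkeeping step is to verify that $(\tilde Y,\tilde Z)$ solves the BSDE on $[0,T_2]$ with driver $\tilde g$ and terminal condition $\xi_1$. For $t\in(T_1,T_2]$, both $\int_t^{T_2}\tilde g(s,\tilde Y_s,\tilde Z_s)\,ds$ and $\int_t^{T_2}\tilde Z_s\,dB_s$ vanish because $\tilde g\equiv 0$ and $\tilde Z\equiv 0$ there, so the equation reduces to $\tilde Y_t=\xi_1$, which holds by construction (and the two definitions agree at $t=T_1$, since $Y_{T_1}^{T_1}=\xi_1$). For $t\in[0,T_1]$, the contributions over $(T_1,T_2]$ again vanish, so $\int_t^{T_2}$ collapses to $\int_t^{T_1}$ and the equation becomes precisely the defining BSDE~\eqref{eq: BSDE - different T} for $(Y^{T_1},Z^{T_1})$. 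Thus $(\tilde Y,\tilde Z)$ is the solution of a bona fide $[0,T_2]$-BSDE. I expect this verification to be the only delicate point, and it is purely routine once the extension is chosen correctly.

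Finally I would apply the classical Comparison Theorem (El Karoui et al.~\cite{EKPQ}) to the two $[0,T_2]$-BSDEs $(Y^{T_2},Z^{T_2})$ and $(\tilde Y,\tilde Z)$. Their terminal conditions satisfy $\xi_2\geq \xi_1$, and their drivers satisfy $g^{T_2}\geq \tilde g$ pointwise: on $[0,T_1]$ this is the hypothesis $g^{T_2}\geq g^{T_1}$, while on $(T_1,T_2]$ it is the hypothesis $g^{T_2}\geq 0=\tilde g$. Hence $Y_t^{T_2}\geq \tilde Y_t$ for all $t\in[0,T_2]$. Reading this off on the two pieces yields $Y_t^{T_2}\geq Y_t^{T_1}$ for $t\in[0,T_1]$ and $Y_t^{T_2}\geq \xi_1$ for $t\in[T_1,T_2]$, which is exactly the assertion.
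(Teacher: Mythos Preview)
Your proof is correct and follows essentially the same strategy as the paper: both extend $(Y^{T_1},Z^{T_1})$ and $g^{T_1}$ trivially to $[0,T_2]$ (constant $\xi_1$, zero $Z$, zero driver on $(T_1,T_2]$) and then compare two genuine BSDEs on the common horizon. The only difference is cosmetic---the paper linearizes the difference $Y^{T_2}-\tilde Y$ explicitly and invokes Prop.~2.2 of \cite{EKPQ} for linear BSDEs, whereas you apply the Comparison Theorem of \cite{EKPQ} as a black box, which is itself proved by precisely that linearization.
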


\noindent
\begin{proof}
By~\eqref{eq: BSDE - different T}, it follows that
\begin{eqnarray}
Y_t ^{T_2} - Y_t ^{T_1} = && \xi _2 -\xi_1 + \int_t ^{T_2} g^{T_2}(s, Y_s ^{T_2}, Z_s ^{T_2}) ds - \int_t ^{T_1} g^{T_1}(s, Y_s ^{T_1}, Z_s ^{T_1}) ds \notag \\
&& - \int_t ^{T_2} Z_s ^{T_2} dB_s + \int_t ^{T_1} Z_s ^{T_1} dB_s .\label{eq: BSDE - different T-2}
\end{eqnarray}
Let us now extend $g^{T_1}(s,y , z)$, $Y_s ^{T_1}$ and $Z_s ^{T_1}$ also to $[T_1, T_2]$ as follows:
\begin{eqnarray*}
&&\overline{g}^{(T_1)}(s, y, z)= \left\{
\begin{array}{rl}
g^{T_1}(s, y, z);& s \in [0,T_1] \\
0;& s \in (T_1;T_2]
\end{array}
\right. \\
&&\overline{Y}_s ^{T_1}  = \left\{
\begin{array}{rl}
Y_s ^{T_1};& s \in [0,T_1] \\
\xi_1;& s \in (T_1;T_2]
\end{array}
\right.;  \qquad
\overline{Z}_s ^{T_1}  = \left\{
\begin{array}{rl}
Z_s ^{T_1};& s \in [0,T_1] \\
0;& s \in (T_1;T_2]
\end{array}
\right.
\end{eqnarray*}
For $t \in [0,T_2]$, equation~\eqref{eq: BSDE - different T-2} becomes then
\begin{equation} \label{eq: BSDE - different T-3}
Y_t ^{T_2} - \overline{Y}_t ^{T_1} = \xi _2 -\xi_1 + \int_t ^{T_2} \hspace{-2mm}\big[ g^{T_2}(s, Y_s ^{T_2}, Z_s ^{T_2}) - \overline{g}^{T_1}(s, \overline{Y}_s ^{T_1}, \overline{Z}_s ^{T_1}) \big]ds \\
- \int_t ^{T_2} \hspace{-1mm}(Z_s ^{T_2} - \overline{Z}_s ^{T_1} ) dB_s
\end{equation}
Set now $\widetilde{Y}_s \triangleq Y_s ^{T_2} - \overline{Y}_s ^{T_1}$, $\widetilde{\xi} \triangleq \xi_2 - \xi_1$, $\widetilde{Z}^i_s \triangleq Z_s ^{T_2, i} - \overline{Z}_s ^{T_1, i}$ for $i=1,...,d$, and
\begin{eqnarray*}
\Delta_s g &=& g^{T_2}(s, \overline{Y}_s ^{T_1}, \overline{Z}_s ^{T_1}) - \overline{g}^{T_1}(s, \overline{Y}_s ^{T_1}, \overline{Z}_s ^{T_1}) \\
\Delta_s y &=& \frac{g^{T_2}(s, Y_s ^{T_2}, Z_s ^{T_2}) - g^{T_2}(s, \overline{Y}_s ^{T_1}, Z_s ^{T_2})}{Y_s ^{T_2} - \overline{Y}_s ^{T_1}} 1_{\{Y_s ^{T_2} \neq \overline{Y}_s ^{T_1} \}} \\
(\Delta_s z)^i &=&  \frac{g^{T_2}(s, \overline{Y}_s ^{T_1}, Z_s ^{T_2}) - g^{T_2}(s, \overline{Y}_s ^{T_1}, \overline{Z}_s ^{T_1})}{d( Z_s ^{T_2, i} - \overline{Z}_s ^{T_1, i})} 1_{\{Z_s ^{T_2, i} \neq \overline{Z}_s ^{T_1, i} \}}
\end{eqnarray*}
for $i=1,...,d$. Hence~\eqref{eq: BSDE - different T-3} can be rewritten as
\begin{equation} \label{eq: BSDE - different T-4}
\left\{
\begin{array}{rl}
- d \widetilde{Y}_t  &=  \left[\Delta_t y \, \widetilde{Y}_t + \Delta_t z \cdot \widetilde{Z}_t + \Delta_t \, g \right] dt- \widetilde{Z}_t dB_t \\
\widetilde{Y}_{T_2}  &= \widetilde{\xi}
\end{array}
\right.
\end{equation}
Since $\Delta_t y \in \Bbb H^2_{[0,T]} (\Bbb R), \Delta_t z \in \Bbb H^2_{[0,T]} (\Bbb R^d)$ and $\Delta_t \, g$ is in $\Bbb H^2_{[0,T]} (\Bbb R)$ by the assumption of $g^{T_i}$ Lipschitz, the solution of~\eqref{eq: BSDE - different T-4} exists uniquely by Prop. 2.2 of El Karoui, Peng and Quenez~\cite{EKPQ}. Moreover, since $g^{T_2}(s,y,z) \geq \overline{g}^{T_1}(s,y,z)$ for any $s \in [0,T_2], y,z $ and $\widetilde{\xi} \geq 0$, then $\widetilde{Y}_t \geq 0$ for any $t \in [0,T_2]$ which concludes the proof.
\end{proof}
\medskip

The previous result guarantees that for fully-dynamic risk measures induced by $g$-expectations as in~\eqref{eq: rho-from-bsde-gt}, h-longevity holds when $g_u$ is increasing in $u$ and $g_u \geq 0$.

\begin{proposition} \label{prop: longevity-BSDE-gt}
a) If $\mathcal{G}$ is an increasing family of drivers and $\rho_{tu} (0) \geq 0$ for any $t \leq u$, then $(\rho_{tu})_{t,u}$ satisfies h-longevity.

\noindent
b) If $\mathcal{G}$ is an increasing family of drivers and $g_t \geq 0$ for any $t \in [0,T]$, then $(\rho_{tu})_{t,u}$ satisfies h-longevity.
\end{proposition}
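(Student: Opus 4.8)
The plan is to derive both statements from the structural results already in place, without any new stochastic estimates. The point is that h-longevity~\eqref{eq: time-value} is exactly the requirement $\rho_{st}(X) \leq \rho_{su}(X)$ for every $X \in L^2(\mathcal{F}_t)$ and $s \leq t \leq u$, and in each part the hypotheses supply precisely what is needed to force this comparison.

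For part a) I would work entirely at the abstract level. By Theorem~\ref{prop: sub-tc-bsde-gt}(a), the assumption that $\mathcal{G}$ is increasing is equivalent to sub time-consistency~\eqref{eq: sub TC} of $(\rho_{tu})_{t,u}$. I then invoke the second half of Remark~\ref{rem: tc and longevity}, which shows that sub time-consistency together with $\rho_{tu}(0) \geq 0$ for all $t \leq u$ implies h-longevity. As the latter inequality is the remaining hypothesis of part a), the conclusion is immediate; in particular no BSDE computation is needed here.

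For part b) I would instead appeal to the Horizon Comparison Theorem~\ref{prop: extension-comparison_BSDE} directly. Fix $s \leq t \leq u$ and $X \in L^2(\mathcal{F}_t) \subseteq L^2(\mathcal{F}_u)$, and apply that theorem with $T_1 = t$, $T_2 = u$, common terminal condition $\xi_1 = \xi_2 = -X$, and drivers $g^{T_1} = g_t$, $g^{T_2} = g_u$, so that by~\eqref{eq: rho-from-bsde-gt} one has $Y_r^{T_1} = \mathcal{E}^{g_t}(-X \mid \mathcal{F}_r)$ and $Y_r^{T_2} = \mathcal{E}^{g_u}(-X \mid \mathcal{F}_r)$. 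The hypotheses of the theorem hold because the increasing property of $\mathcal{G}$ gives $g_u \geq g_t$ on $[0,t]$, the assumption $g_t \geq 0$ for all $t$ gives in particular $g_u \geq 0$ on $[t,u]$, and $\xi_2 = \xi_1$ trivially satisfies $\xi_2 \geq \xi_1$. The conclusion $Y_r^{T_2} \geq Y_r^{T_1}$ on $[0,t]$, read at $r = s$, is exactly $\rho_{su}(X) \geq \rho_{st}(X)$, i.e. h-longevity.

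As an alternative for part b), one can reduce to part a): comparing the BSDE with driver $g_u$ and zero terminal condition against the trivial BSDE with null driver shows that $g_t \geq 0$ forces $\rho_{tu}(0) = \mathcal{E}^{g_u}(0 \mid \mathcal{F}_t) \geq 0$, and then part a) applies verbatim. I do not anticipate a genuine obstacle in either part; the only point requiring attention is the bookkeeping in part b), namely matching the two hypotheses of Theorem~\ref{prop: extension-comparison_BSDE} to the increasing and nonnegativity assumptions and, in particular, verifying nonnegativity of $g_u$ over the whole extension interval $[t,u]$ and not merely on the region $[0,t]$ where the two drivers are compared.
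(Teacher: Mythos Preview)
Your proposal is correct and follows essentially the same approach as the paper: part a) is obtained by combining Theorem~\ref{prop: sub-tc-bsde-gt}(a) with the second half of Remark~\ref{rem: tc and longevity}, and part b) is a direct application of the Horizon Comparison Theorem~\ref{prop: extension-comparison_BSDE}. Your write-up simply spells out the bookkeeping (choice of $T_1,T_2,\xi_1,\xi_2$ and verification of the two driver hypotheses) that the paper leaves implicit, and your alternative reduction of b) to a) is a valid extra route.
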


\noindent
\begin{proof}
a) follows by Remark~\ref{rem: tc and longevity} and Proposition~\ref{prop: sub-tc-bsde-gt}.

\smallskip
\noindent
b) By Theorem~\ref{prop: extension-comparison_BSDE}, increasing monotonicity of the family $\mathcal{G}$ and $g_t \geq 0$ for any $t \in [0,T]$ imply h-longevity.
\end{proof}

\smallskip
Notice that requiring $g_u \geq 0$ implies that $\rho_{tu}(X) \geq E_P [- X \vert \mathcal{F}_t]$ for any $X \in L^2 (\mathcal{F}_u)$. Indeed, by definition of $\rho_{tu}(X) $ as in~\eqref{eq: rho-from-bsde-gt} and from $g_u \geq 0$, it follows that
\begin{eqnarray*}
\rho_{tu}(X) &=& E_P \left[\left. -X +\int_t ^u g_u(v,Z_v) \, dv - \int_t^u Z_v \, dB_v \right| \mathcal{F}_t \right] \\
&=& E_P \left[ -X   | \mathcal{F}_t \right] + E_P \left[\left. \int_t ^u g_u(v,Z_v) \, dv  \right| \mathcal{F}_t \right] \\
& \geq & E_P \left[ -X   | \mathcal{F}_t \right].
\end{eqnarray*}

\subsubsection{Beyond the Lipschitz driver: the entropic case} \label{sec: entrpic BSDE}

Next, we consider the entropic risk measure, which is generated by a quadratic BSDE. Even though the Lipschitz condition is clearly not satisfied in this case, the solution of the BSDE is still unique. This discussion on the uniqueness of solutions is developed in~\cite{barrieu-el-karoui}. The entropic risk measure is particularly interesting because of its explicit connection with utility functions as investment preference criteria.
In the following example, we propose a variation to the classical entropic risk measure able to capture h-longevity.

\begin{example} \label{ex: BSDE g non normalized - entropic}
In the one-dimensional case, for every $u\in [0,T]$, consider the driver $g_u(t,z)=\frac{b_u}{2} z^2 +a_u(t)$ for any $t \in [0,T], z \in \mathbb{R}$, with $b_u>0$ and $a_u$ positive function.
We recall that $b_u$ is the reciprocal of the risk aversion parameter in the corresponding utility function.
By using the explicit solution of the BSDE with driver $\bar g_u(t,z)=\frac{b_u}{2} z^2$, it can be checked easily that, for any $t \in [0,T]$ and $X \in L^2(\mathcal{F}_u)$,
\begin{equation*}
\rho_{su}(X)= \frac{1}{b_u} \ln \left(E_P \left[\left. \exp ( -b_uX) \right| \mathcal{F}_s \right]\right) + \int_s^u a_u(t)dt.
\end{equation*}
Hence, $(\rho_{su})_{s,u}$ is not normalized and does not satisfy the restriction property. Instead, {\color{blue} (strict)} h-longevity holds whenever $(b_u)_u$ and $(a_u)_u $ are increasing in $u$. See Proposition~\ref{prop: longevity-BSDE-gt}.
\end{example}

\section{Relation with BSVIEs} \label{sec: bsvies}

We have come now to consider a fully-dynamic risk measure induced by a single BSVIE with a driver $g$ and also measures generated by a family of BSVIEs associated to the drivers $\mathcal{G}=(g_t)_{t \in [0,T]}$ where the index $t$ refers to the time horizon.

\subsection{Risk measures generated by a single BSVIE}

In the same spirit of risk measures generated by BSDEs, we can consider risk measures induced by BSVIEs as Agram~\cite{agram} and Yong~\cite{yong07} suggested. Indeed, this can also be seen in connection with families of BSDEs as Yong~\cite{yong13-survey} has pinpointed.

Let us consider a BSVIE of type
\begin{equation} \label{eq: BSVIE}
Y_t= \xi_t + \int_t ^T g(t,s,Z(t,s)) \, ds - \int _t^T Z(t,s) \, dB_s,
\end{equation}
where $\xi_t\in L^2(\mathcal{F}_T)$, for all $t\in [0,T]$. The driver $g: \Omega \times \Delta \times \Bbb R^d \to \Bbb R$, with $\Delta_{[0,T]} \triangleq \{(t,s) \in [0,T] \times [0,T]: s\geq t \}$, is independent of $y$ (so to guarantee translation invariance in the application to monetary risk measures) and it is such that
\begin{itemize}
\item for any $t \in [0,T], z \in \Bbb R^d$, $g(t, \cdot, z)$ is adapted;
\item is uniformly Lipschitz, i.e. there exists a constant $C >0$ such that
\begin{equation*}
\vert g(t,s,z_1) - g(t,s,z_2) \vert \leq C \vert z_1 - z_2\vert, \mbox{ for any } (t,s) \in \Delta, z_1, z_2 \in \Bbb R^d, P-a.s.;
\end{equation*}
\item $E \left[\int_0^T \left(\int_t^T g(t,s,0) \, ds \right)^2 \, dt \right]<+\infty$.
\end{itemize}
Under there assumptions, we have a unique solution $(Y_{\cdot},Z(\cdot, \cdot)) \in \Bbb H^2_{[0,T]} (\Bbb R) \times \mathcal{Z}^2_{[0,T]} (\Bbb R^d)$, where we have set
\begin{equation*}
\mathcal{Z}^2_{[a,b]} (\Bbb R^d) \triangleq \left\{Z: \Omega \times \Delta_{[a,b]} \to \Bbb R^d: \begin{array}{rl}
&Z(t, \cdot) \mbox{ is adapted for all } t \mbox{ and }\\
&E \left[\int_a^b \int_a^b \vert Z(t,s)\vert ^2 \, ds \, dt \right]<+\infty
\end{array}
 \right\}.
\end{equation*}
See Yong~\cite{yong13-survey}, Thm. 2.2, and~\cite{yong07}.
Also Yong~\cite{yong13-survey} introduced the following family of BSDEs parameterized by $t$ and related to the BSVIE~\eqref{eq: BSVIE}:
\begin{equation*} \label{eq: BSVIE-parametrized-BSDE}
\eta (r;t, \xi_t)= \xi _t + \int_r^T \bar{g}(v,\zeta(v;t);t) dv - \int_r^T \zeta(v;t) dB_v, \quad r \in [t,T]
\end{equation*}
where
\begin{equation*}
\zeta(v;t)=Z(t,v); \quad \bar{g}(v,\zeta(v;t);t)=g(t,v,Z(t,v))\quad \mbox{and} \quad Y_t= \eta(t;t, \xi_t).
\end{equation*}
Although $\eta(\cdot;t,X)$ and $Y_t$ are closely related, $Y_t$, solution of the BSVIE~\eqref{eq: BSVIE} may satisfy some specific properties, since it corresponds not to the whole family $\eta(\cdot;t,X)$, but only to $\eta(t;t,X)$.

\smallskip
As in Section~\ref{sec: bsdes}, we assume convex drivers that guarantee convex risk measures satisfying monotonicity and translation invariance. See Yong~\cite{yong07} and Agram~\cite{agram}, who also makes an extension to jump dynamics.
Then we focus on monetary risk measures of the type
\begin{equation} \label{eq: rho_BSVIEg}
\rho_{tu} (X)= \mathcal{E}^{g,V} \left(-X \vert \mathcal{F}_t \right), \quad X \in L^{2} (\mathcal{F}_u),
\end{equation}
where $\mathcal{E}^{g,V} \left(-X \vert \mathcal{F}_t \right)$ denotes the $Y$-component of the solution  of the BSVIE of type~\eqref{eq: BSVIE}, with terminal condition $\xi_t= -X$, for all $t$.

As shown in the following result, for a driver independent on $y$ the condition $g(t,u,0)=0$, for any $t \leq u$, guarantees the restriction property as well as the normalization. Hence, in our case we cannot assume $g(t,u,0)=0$.

\begin{proposition} \label{prop: norm-restrict-BSVIE-g}
The following statements are equivalent:

\noindent (a) $g(t,u,0)=0$ for any $0 \leq t \leq u \leq T$;

\noindent (b) $\rho$ is normalized, see~\eqref{normalization}:

\noindent (c) $\rho$ satisfies the restriction property~\eqref{restriction}.
\end{proposition}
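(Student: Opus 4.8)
The plan is to prove the cycle $(a)\Rightarrow(c)\Rightarrow(b)\Rightarrow(a)$, following the blueprint of the BSDE result in Proposition~\ref{prop: norm-restrict-BSDE-g} but using, in place of a single BSDE, the family of BSDEs parametrized by the first time variable $t$ that is attached to the BSVIE~\eqref{eq: BSVIE} (see Yong~\cite{yong13-survey}). The key reduction I would record first is that, for a driver independent of $y$, the value $\rho_{tu}(X)=Y_t$ coincides with the diagonal value of the standard BSDE on $[t,u]$ with terminal condition $-X$ and driver $s\mapsto g(t,s,\cdot)$ (first argument frozen at $t$); equivalently $\rho_{tu}(X)=\mathcal E^{g(t,\cdot,\cdot)}(-X\mid\mathcal F_t)$ on $[t,u]$, with $Z(t,\cdot)$ the corresponding $Z$-component. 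This lets me transfer the BSDE computations slice by slice.

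For $(a)\Rightarrow(c)$ I would mimic Peng's extension argument used in Proposition~\ref{prop: norm-restrict-BSDE-g}: given $X\in L^2(\mathcal F_u)$ and $v\ge u$, extend the horizon-$u$ solution to $[0,v]$ by putting $Z(\tau,s)=0$ and $Y_\tau=-X$ for $\tau,s\in(u,v]$; because $g(\tau,s,0)=0$ there by (a), the extended pair solves the horizon-$v$ BSVIE with the same terminal $-X$, and uniqueness gives $\rho_{tv}(X)=\rho_{tu}(X)$, i.e. restriction. For $(c)\Rightarrow(b)$ I would combine restriction with cash-invariance: for $X\in L^2(\mathcal F_t)$ one has $\rho_{tu}(X)=\rho_{tu}(0)-X$, so restriction forces $\rho_{tu}(0)=\rho_{tv}(0)$ for all $t\le u\le v$; taking $u=t$ and noting that the BSVIE on $[t,t]$ is trivial (hence $\rho_{tt}(0)=0$) yields $\rho_{tu}(0)=0$, which is normalization. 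The implication $(a)\Rightarrow(b)$ is then automatic, and is anyway immediate since $(Y,Z)\equiv(0,0)$ solves the BSVIE with terminal $0$ exactly when $g(\cdot,\cdot,0)=0$.

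The implication $(b)\Rightarrow(a)$ is the main obstacle. Normalization gives $Y_\tau=\rho_{\tau u}(0)=0$ for every $\tau\in[0,u]$, whence, for each fixed $\tau$,
\[
\int_\tau^u g(\tau,s,Z(\tau,s))\,ds=\int_\tau^u Z(\tau,s)\,dB_s .
\]
In the BSDE case (Proposition~\ref{prop: norm-restrict-BSDE-g}) the analogous identity is read, for fixed lower limit, as a continuous martingale in the upper variable equal to a finite-variation process: it must be constant, forcing $Z\equiv0$ and then $g(\cdot,0)=0$. This martingale-versus-finite-variation comparison is exactly what fails in the Volterra setting, and I expect it to be the delicate point: for fixed $\tau$ the slice $Z(\tau,\cdot)$ depends on the horizon $u$, so one cannot vary $u$ with the integrand frozen, and normalization delivers only one scalar identity per frozen driver --- the vanishing of $\mathcal E^{g(\tau,\cdot,\cdot)}(0\mid\mathcal F_\tau)$ on $[\tau,u]$ --- rather than the vanishing of an entire value process. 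In particular $Z(\tau,\cdot)$ need not vanish, so the diagonal information does not by itself control the off-diagonal values $g(\tau,s,0)$.

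To overcome this I would linearize the frozen BSDE: writing $g(\tau,s,Z(\tau,s))=g(\tau,s,0)+\lambda_s\cdot Z(\tau,s)$ with $|\lambda|\le C$ from the Lipschitz assumption, Girsanov's theorem yields an equivalent measure $Q$ with $\rho_{\tau u}(0)=E_{Q}\big[\int_\tau^u g(\tau,s,0)\,ds\mid\mathcal F_\tau\big]$, so normalization becomes $E_Q[\int_\tau^u g(\tau,s,0)\,ds\mid\mathcal F_\tau]=0$ for all $u\ge\tau$. When $g(\cdot,\cdot,0)$ is deterministic it factors out of the conditional expectation, and differentiating in $u$ gives $g(\tau,u,0)=0$; this covers the examples of interest (cf. Example~\ref{ex: longevity-bsvie}). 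The genuinely delicate step, which I expect to require either the converse comparison theorem for BSVIEs or a restriction on the class of drivers, is passing from this single conditional identity to the pointwise vanishing of a possibly random off-diagonal driver, precisely because taking a $Q$-conditional mean can mask cancellations among the values of $g(\tau,\cdot,0)$.
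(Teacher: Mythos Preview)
Your handling of $(a)\Rightarrow(c)$ and $(c)\Rightarrow(b)$ is correct and coincides with the paper's; the paper proves $(a)\Leftrightarrow(b)$ and $(a)\Leftrightarrow(c)$ separately, but its $(c)\Rightarrow(a)$ is really $(c)\Rightarrow(b)$ followed by $(b)\Rightarrow(a)$, exactly your route. The content is in $(b)\Rightarrow(a)$, and there the paper supplies no idea beyond the one you worry about: it transcribes the BSDE argument of Proposition~\ref{prop: norm-restrict-BSDE-g}, asserting that for fixed $t$ the map $u\mapsto \int_t^u Z(t,s)\,dB_s$ is a continuous martingale equal to a finite-variation process and hence identically zero. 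This is precisely the step you flagged as illegitimate in the Volterra setting, since $Z(t,\cdot)$ is the $Z$-component of the frozen BSDE on $[t,u]$ and therefore depends on $u$; the object in question is not an It\^o integral in the variable $u$, and the martingale/FV dichotomy does not apply.

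Your reservation is in fact decisive: $(b)\Rightarrow(a)$ fails for random drivers. Take $d=1$ and $g(t,s,z)=B_s-B_t$, which is $\mathcal F_s$-adapted, trivially Lipschitz and convex in $z$, and satisfies the integrability hypothesis. Since $g$ is independent of $z$, the $t$-slice BSDE on $[t,u]$ with terminal $0$ has $\eta(r;t)=E_P\bigl[\int_r^u(B_s-B_t)\,ds\,\big|\,\mathcal F_r\bigr]=(u-r)(B_r-B_t)$, whence $\rho_{tu}(0)=\eta(t;t)=0$ for all $t\le u$; in fact $\rho_{tu}(X)=E_P[-X\mid\mathcal F_t]$ for every $X$, so both normalization and restriction hold. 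Yet $g(t,u,0)=B_u-B_t\not\equiv0$. Thus neither $(b)\Rightarrow(a)$ nor $(c)\Rightarrow(a)$ holds as stated, and your Girsanov argument---which goes through when $g(\cdot,\cdot,0)$ is deterministic because then $E_Q\bigl[\int_\tau^u g(\tau,s,0)\,ds\,\big|\,\mathcal F_\tau\bigr]=\int_\tau^u g(\tau,s,0)\,ds$ regardless of the ($u$-dependent) measure $Q$---identifies the right extra hypothesis under which the equivalence can be rescued.
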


\noindent
\begin{proof}
(a) $\Rightarrow$ (b) The implication is true because $(Y_\cdot, ,Z(\cdot,\cdot))$ with $Y_t=Z(t,s)=0$,
$s \geq t$ is the unique solution corresponding to $X=0$.

\noindent
(b) $\Rightarrow$ (a). Assume that $\rho_{tu}(0)=0$ for any $0\leq t \leq u \leq T$. we have
$$
0=Y_t= \int_t^u g(t,s,Z(t,s)) ds - \int_t^u Z(t,s) dB_s
$$
holds for any $0\leq t \leq u \leq T$. Hence
$$
\int_t^u g(t,s,Z(t,s)) ds = \int_t^u Z(t,s) dB_s, \quad \mbox{ for any } 0\leq t \leq u \leq T.
$$
As in the proof of Proposition~\ref{prop: norm-restrict-BSDE-g}, it follows that for any fixed $t$ the martingale $M_t (u)=\int_t^u Z(t,s) dB_s$, with $u \geq t$, starting from $0$ should be identically to $0$. Hence $Z(t,s) \equiv 0$ for any $s \geq t$ and $\int_t^u g(t,s,0) ds \equiv 0$ for any $u \geq t$. By deriving this last equation with respect to $u$, it follows that $g(t,u,0)=0$ for any $u \geq t$.

\noindent
(a) $\Rightarrow$ (c). Assume that $X \in L^{2}(\mathcal{F}_u)$ and consider $\rho_{tu} (X)$ and $\rho_{tv} (X)$ for any $v \geq u$. Let us denote by $(Y^{X,u}_r,Z^{X,u}(r,s))$ (resp. $(Y^{X,v}_r,Z^{X,v}(r,s))$) the solution corresponding to $\rho_{tu} (X)$ (resp. $\rho_{tv} (X)$) at time $r \leq u$. Since $(Y^{X,v}_r,Z^{X,v}(r,s))$ with
$$
Y^{X,v}_r= \left\{
\begin{array}{rl}
Y^{X,u}_r;& r \leq u \\
-X;& u < r \leq v
\end{array}
\right. \quad
Z^{X,v} (r,s)= \left\{
\begin{array}{rl}
Z^{X,u}(r,s);& s \leq u \\
0;& u < s \leq v
\end{array}
\right.
$$
is a solution of $\rho_{tv} (X)$ whether $g(t,u,0)=0$ for any $0 \leq t \leq u \leq T$, the restriction property follows.

\noindent
(c) $\Rightarrow$ (a) The argument is similar to the proof of the corresponding implication in
Proposition~\ref{prop: norm-restrict-BSDE-g}.
\end{proof}

\smallskip
Note that, for all $t$,  the set of probability measures $Q_t$ on $\mathcal{F}_u$ such that
\begin{equation} \label{eq: definition-Qt}
\frac{dQ_t}{dP} \triangleq \exp \left\{\frac{1}{2} \int_t ^u \vert q(t,s) \vert ^2 ds- \int_t ^u q(t,s) dB_s \right\},
\end{equation}
for some stochastic process $(q(t,s))_{t \leq s \leq u} \in \Bbb H^2_{[t,u]}(\Bbb R^d)$ coincides with $\mathcal{Q}_{tu}$ in a Brownian setting (see Revuz and Yor~\cite{revuz-yor}, Chapter VIII, Prop. 1.6).

\begin{theorem}(Dual representation) \label{prop: dual-repres-bsvie}
For any $t \leq u$ fixed, the risk measure $\rho_{tu}$ induced by a BSVIE as in~\eqref{eq: rho_BSVIEg} has the following dual representation
\begin{equation} \label{eq: dual-repr-bsvie}
\rho_{tu} (X)= \underset{Q_t \in \mathcal{Q}_{tu}} {\esssup} \left\{E_{Q_t} \left[ - X \vert \mathcal{F}_t \right] - \alpha_{tu} (Q_t) \right\},
\end{equation}
where $Q_t$ corresponds to $(q(t,s))_{t \leq s \leq u}$ via~\eqref{eq: definition-Qt} and the penalty functional is given by
\begin{equation*} 
\alpha_{tu} (Q_t)= E_{Q_t} \left[\left. \int_t ^u g^*(t,s,q(t,s)) ds \right\vert \mathcal{F}_t \right],
\end{equation*}
with $g^*(t,s,\cdot)$ denoting the convex conjugate of $g(t,s,\cdot)$.
\end{theorem}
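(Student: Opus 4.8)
The plan is to reduce the BSVIE, for a \emph{fixed} first time argument, to a standard BSDE, and then to invoke the dual representation for convex risk measures generated by BSDEs. Fix $t \leq u$ throughout. Following Yong~\cite{yong13-survey}, the $Y$-component $Y_t = \rho_{tu}(X)$ of the BSVIE~\eqref{eq: BSVIE} with terminal condition $\xi_t = -X$ coincides with the value $\eta(t;t,-X)$ at the initial time $t$ of the parametrized BSDE
\begin{equation*}
\eta(r;t,-X) = -X + \int_r^u g(t,s,\zeta(s;t)) \, ds - \int_r^u \zeta(s;t) \, dB_s, \qquad r \in [t,u],
\end{equation*}
in which the first argument of $g$ is frozen at $t$. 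Thus, for this fixed $t$, the map $\rho_{tu}$ is exactly the risk measure induced by the $\bar g$-expectation with driver $\bar g(s,z) \triangleq g(t,s,z)$ on the interval $[t,u]$, evaluated at its initial time. First I would record that the frozen driver $\bar g$ inherits the structural properties required: it is independent of $y$, uniformly Lipschitz and convex in $z$, and the integrability assumption on $g(t,\cdot,0)$ ensures (for a.e.\ $t$, which suffices) that $E\big[\int_t^u |\bar g(s,0)|^2\, ds\big] < \infty$. Hence $\rho_{tu}(X) = \mathcal{E}^{\bar g}(-X \mid \mathcal{F}_t)$ is a convex, monotone, $\mathcal{F}_t$-translation invariant risk measure on $L^2(\mathcal{F}_u)$.

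Next I would apply the dual representation of BSDE-generated risk measures established by Delbaen, Peng and Rosazza Gianin~\cite{DPR}. Since $\bar g$ is convex in $z$ and independent of $y$, that result yields
\begin{equation*}
\rho_{tu}(X) = \underset{Q_t}{\esssup}\; \left\{ E_{Q_t}[-X \mid \mathcal{F}_t] - E_{Q_t}\!\left[\left. \int_t^u g^*(t,s,q(t,s)) \, ds \,\right|\, \mathcal{F}_t \right] \right\},
\end{equation*}
where $g^*(t,s,\cdot)$ denotes the convex conjugate of $g(t,s,\cdot)$ and $q(t,\cdot)$ is the Girsanov kernel of $Q_t$. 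This is precisely the asserted penalty $\alpha_{tu}(Q_t)$.

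To close the argument I would identify the admissible measures with $\mathcal{Q}_{tu}$. By the martingale representation theorem in the Brownian filtration, any $Q_t \ll P$ agreeing with $P$ on $\mathcal{F}_t$ has a density of the exponential form~\eqref{eq: definition-Qt} for some kernel $q(t,\cdot) \in \Bbb H^2_{[t,u]}(\Bbb R^d)$, and conversely every such density defines a measure in $\mathcal{Q}_{tu}$ (Revuz and Yor~\cite{revuz-yor}, Chapter VIII, Prop.~1.6); kernels for which the penalty is infinite drop out of the supremum. Substituting this parametrization gives exactly~\eqref{eq: dual-repr-bsvie} with the stated $\alpha_{tu}$.

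The hard part will be the rigorous justification of the reduction step together with its measure-theoretic bookkeeping: the BSVIE integrability hypothesis is phrased as $E\big[\int_0^T (\int_t^T g(t,s,0)\, ds)^2\, dt\big] < \infty$, which controls the frozen drivers only for almost every $t$ rather than for every $t$, so one must argue that this is enough to run the BSDE theory at the fixed $t$ of interest, or else lean on the well-posedness already provided by Yong's existence theorem for the parametrized family. Once the reduction is in place, the Volterra feature has been removed and the representation is a direct transcription of the classical $g$-expectation case.
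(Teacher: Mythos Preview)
Your reduction is correct and yields the result, but it proceeds differently from the paper. The paper gives a direct, self-contained argument that essentially reproduces the Barrieu--El Karoui proof in the Volterra setting: it rewrites $Y_t$ by adding and subtracting $q(t,s)\cdot Z(t,s)$, applies Girsanov for each $Q_t$, uses $q\cdot z - g(t,s,z) \leq g^*(t,s,q)$ to get the lower bound $Y_t \geq \esssup\{\cdots\}$, and then obtains the reverse inequality by a measurable selection $q_Z(t,s)$ achieving equality in the conjugate relation. Your route instead observes that freezing the first argument at $t$ turns the BSVIE into a genuine BSDE on $[t,u]$ with driver $\bar g(s,z)=g(t,s,z)$, so that $\rho_{tu}$ is literally a $\bar g$-expectation and the representation of~\cite{DPR} applies verbatim with $\bar g^* = g^*(t,\cdot,\cdot)$. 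This is more economical and makes transparent why the Volterra structure adds nothing new at the level of the dual representation for fixed $(t,u)$; the price is the dependence on the external result and the integrability caveat you flag (the BSVIE hypothesis controls $g(t,\cdot,0)$ only for a.e.\ $t$), which the paper's direct argument sidesteps by working inside the already-existing BSVIE solution rather than re-invoking BSDE well-posedness.
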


\noindent
\begin{proof}
The present proof extends a similar one of Barrieu and El Karoui~\cite{barrieu-el-karoui} to the case of BSVIEs.
Let $t, u \in [0,T]$ with $t \leq u$ be fixed arbitrarily. The evaluation of risk~\eqref{eq: rho_BSVIEg} can be written as
\begin{eqnarray*}
Y_t \hspace{-2mm}&=&\hspace{-2mm} -X + \hspace{-1mm}\int_t^u \hspace{-2mm} [g(t,s,Z(t,s))- q(t,s) \cdot Z(t,s)] ds - \hspace{-1mm}\int_t^u \hspace{-2mm} Z(t,s) (dB_s - q(t,s) ds) \\
&=&\hspace{-2mm} -X + \int_t^u [g(t,s,Z(t,s))- q(t,s) \cdot Z(t,s)] ds - \int_t^u Z(t,s) dB^{Q_t}_s
\end{eqnarray*}
and, by Girsanov Theorem, $B^{Q_t}_u \triangleq   B_u -B_t - \int_t^u q(t,s) ds$, $u \geq t$, is a Brownian motion with respect to $Q_t$ (see~\eqref{eq: definition-Qt}) with initial value $B^{Q_t}_t=0$.
Set now
\begin{equation*}
g^*(t,s,q) \triangleq \underset{ z \in \Bbb R^d }{\esssup} \{q \cdot z - g(t,s,z) \}, \, q \in \Bbb R^d,
\end{equation*}
for $s,t$ such that $s \geq t$. We obtain that, for any fixed $t$, $g^*(t,\cdot,q(t,\cdot))$ is adapted.
Furthermore,
\begin{eqnarray*}
Y_t &=& -X - \int_t^u [q(t,s) \cdot Z(t,s)-g(t,s,Z(t,s))] ds - \int_t^u Z(t,s) dB^{Q_t}_s \\
& \geq & -X - \int_t^u g^*(t,s,q(t,s)) ds - \int_t^u Z(t,s) dB^{Q_t}_s,
\end{eqnarray*}
where the last inequality follows by the definition of $g^*$. By taking the conditional expectation with respect to $Q_t$, it follows
$$
Y_t \geq  E_{Q_t} \left[ - X \vert \mathcal{F}_t \right] - E_{Q_t} \left[\left. \int_t^u g^*(t,s,q(t,s)) ds \right\vert \mathcal{F}_t \right],
$$
since $E_{Q_t} [\int_t^u Z(t,s) dB^{Q_t}_s \vert \mathcal{F}_t]=0$ because $(B^{Q_t}_s)_{s \geq t}$ is a Brownian motion with respect to $Q_t$. Consequently,
\begin{equation} \label{eq: dual-bsvie-diseq1}
Y_t \geq  \underset{Q_t \in \mathcal{Q}_{tu}}{\esssup} \Big\{ E_{Q_t} \big[  - X \vert \mathcal{F}_t \big] - E_{Q_t} \Big[  \int_t^u g^*(t,s,q(t,s)) ds \big\vert \mathcal{F}_t \Big] \Big\}.
\end{equation}
It remains to prove the converse inequality in~\eqref{eq: dual-bsvie-diseq1}. In a similar way as in Theorem 7.4 (i) and Lemma 7.5 of Barrieu and El Karoui~\cite{barrieu-el-karoui}, for $t$ fixed and any $Z(t,s)$, $s \geq t$, there exist some progressively measurable $q_Z(t,s)$ (associated to a $Q_{t,Z}$ via~\eqref{eq: definition-Qt}) such that $g(t,s,Z(t,s)) = q_Z(t,s) \cdot Z(t,s) - g^*(t,s,q_Z(t,s)) $. Hence,
\begin{eqnarray*}
Y_t &=& -X + \int_t^u g(t,s,Z(t,s)) ds - \int_t^u Z(t,s) dB_s  \\
&=& -X + \int_t^u [q_Z(t,s) \cdot Z(t,s) - g^*(t,s,q_Z(t,s))] ds - \int_t^u Z(t,s) dB_s \\
&=& -X - \int_t^u  g^*(t,s,q_Z(t,s)) ds - \int_t^u Z(t,s) dB^{Q_{t,Z}}_s,
\end{eqnarray*}
where $B^{Q_{t,Z}}_{s}= B_s-B_t-\int_t^s q_Z(t,r) dr$, $t\leq s \leq u$.
By taking the conditional expectation with respect to $Q_{t,Z}$, it holds that
\begin{eqnarray}
Y_t &=& E_{Q_{t,Z}} \left[ \left. -X - \int_t^u  g^*(t,s,q_Z(t,s)) ds \right\vert \mathcal{F}_t \right]  \notag\\
&\leq & \underset{Q_t \in \mathcal{Q}_{tu}}{\esssup} \left\{E_{Q_t} \left[ - X \vert \mathcal{F}_t \right] - E_{Q_t} \left[ \left. \int_t^u g^*(t,s,q(t,s)) ds \right\vert \mathcal{F}_t \right] \right\}. \label{eq: dual-bsvie-diseq2}
\end{eqnarray}
The thesis follows then by~\eqref{eq: dual-bsvie-diseq1} and~\eqref{eq: dual-bsvie-diseq2}.
\end{proof}
\smallskip

\subsubsection{Time-consistency}

The result below provides a necessary and sufficient condition on $g$ for sub time-consistency of $(\rho_{tu})_{t,u}$ induced by a BSVIE $(g)$.

\begin{theorem} \label{prop: sub-tc-bsvie}
Let $(\rho_{tu})_{t,u}$ be induced by a BSVIE with driver $g$ as in~\eqref{eq: rho_BSVIEg}.

\noindent a) The driver $g(t,v,z)$ is decreasing in $t$ for any $v \in [t,u],z \in \Bbb R^d$ (meaning that, for any $s \leq t$, $g(t,v,z) \leq g(s,v,z)$) if and only if $(\rho_{tu})_{t,u}$ satisfies sub time-consistency.

\noindent b) If the driver $g(t,\cdot,\cdot)$ is constant in $t$, then $(\rho_{tu})_{t,u}$ satisfies time-consistency.
\end{theorem}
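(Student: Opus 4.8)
The plan is to reduce the BSVIE to a family of BSDEs indexed by the \emph{evaluation} time and then mimic the pasting-and-comparison argument of Theorem~\ref{prop: sub-tc-bsde-gt}. The starting observation is that, for a driver independent of $y$ that enters~\eqref{eq: BSVIE} only through $Z(t,s)$, the $t$-th equation is a standalone BSDE in the variable $s\in[t,u]$ with the frozen driver $g(t,\cdot,\cdot)$; hence, for every fixed evaluation time $r$,
$$
\rho_{ru}(X)=\mathcal{E}^{g(r,\cdot,\cdot)}(-X\mid\mathcal{F}_r),\qquad X\in L^2(\mathcal{F}_u),
$$
which is nothing but the relation $Y_r=\eta(r;r,-X)$ recalled above. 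First I would record this identity: it realizes $(\rho_{tu})_{t,u}$ as a family of $g$-expectations whose driver is indexed by the \emph{first} (evaluation) time parameter, in contrast with the family~\eqref{eq: rho-from-bsde-gt}, where the driver is indexed by the horizon. This difference in indexing is precisely what will reverse the direction of monotonicity.

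Next I would rewrite sub time-consistency in these terms. Since $\rho_{tu}(X)\in L^2(\mathcal{F}_t)$, the definition gives
$$
\rho_{st}\big(-\rho_{tu}(X)\big)=\mathcal{E}^{g(s,\cdot,\cdot)}\big(\mathcal{E}^{g(t,\cdot,\cdot)}(-X\mid\mathcal{F}_t)\,\big|\,\mathcal{F}_s\big),
$$
with the inner $g$-expectation frozen at $t$ and the outer one frozen at $s$. Introducing the pasted driver
$$
\bar g(v,z)=\begin{cases} g(s,v,z), & v\in[s,t],\\ g(t,v,z), & v\in(t,u],\end{cases}
$$
and gluing the two BSDEs at time $t$ exactly as in~\eqref{eq: sub tc-proof-3}, the left-hand side equals $\mathcal{E}^{\bar g}(-X\mid\mathcal{F}_s)$, while the right-hand side is $\rho_{su}(X)=\mathcal{E}^{g(s,\cdot,\cdot)}(-X\mid\mathcal{F}_s)$. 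Thus sub time-consistency amounts to $\mathcal{E}^{\bar g}(-X\mid\mathcal{F}_s)\le\mathcal{E}^{g(s,\cdot,\cdot)}(-X\mid\mathcal{F}_s)$ for all $X\in L^2(\mathcal{F}_u)$ and all $s\le t\le u$.

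I would then close the equivalence with the two comparison theorems. For sufficiency in (a), decreasing monotonicity gives $\bar g=g(s,\cdot,\cdot)$ on $[s,t]$ and $\bar g=g(t,\cdot,\cdot)\le g(s,\cdot,\cdot)$ on $(t,u]$, so $\bar g\le g(s,\cdot,\cdot)$ on $[s,u]$ and the Comparison Theorem (El Karoui et al.~\cite{EKPQ}) yields the inequality. For necessity, the inequality holding for every terminal condition lets the Converse Comparison Theorem (Briand et al.~\cite{BCHMPeng}) force $\bar g\le g(s,\cdot,\cdot)$ $dv\times dP$-a.e.\ on $[s,u]$; on $(t,u]$ this reads $g(t,v,z)\le g(s,v,z)$, and letting the pair $s\le t$ vary recovers the decreasing monotonicity on the whole range. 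Part (b) is the equality version: when $g(t,\cdot,\cdot)$ is independent of $t$ one has $\bar g\equiv g$ on all of $[s,u]$, whence $\rho_{st}(-\rho_{tu}(X))=\mathcal{E}^{g}(-X\mid\mathcal{F}_s)=\rho_{su}(X)$, i.e.\ strong time-consistency~\eqref{strong tc}.

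The comparison machinery transfers almost verbatim from Theorem~\ref{prop: sub-tc-bsde-gt}, so the main obstacle is conceptual rather than computational: one must keep scrupulous track of \emph{which} argument of the two-time driver is frozen at each stage of the composition, noticing that the inner evaluation freezes the first argument at $t$ and the outer at $s$. This single point is where the Volterra case diverges from the family-of-BSDEs case and turns the \emph{increasing} condition there into a \emph{decreasing} one here. A minor technical care is that $\bar g$ has a jump at $v=t$ and the Converse Comparison Theorem returns only an almost-everywhere inequality; since the jump sits on a null set this does not affect the conclusion $g(t,\cdot,\cdot)\le g(s,\cdot,\cdot)$ on $(t,u]$, and varying $s,t,u$ then delivers the stated monotonicity.
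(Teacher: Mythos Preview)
Your proof is correct. The sufficiency direction (monotonicity $\Rightarrow$ sub time-consistency) and part~(b) coincide with the paper's argument: both write the composition as a pasted BSDE with driver $\bar g$ and compare with the single-driver BSDE for $g(s,\cdot,\cdot)$ via the Comparison Theorem.

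The necessity direction, however, follows a genuinely different route. You exploit the observation that, for a $y$-independent Volterra driver, the BSVIE collapses to a family of ordinary BSDEs indexed by the evaluation time, $\rho_{ru}(X)=\mathcal{E}^{g(r,\cdot,\cdot)}(-X\mid\mathcal{F}_r)$, and then invoke the Converse Comparison Theorem of Briand et al.\ directly on the pair $\bar g,\,g(s,\cdot,\cdot)$, exactly as in the BSDE-family proof of Theorem~\ref{prop: sub-tc-bsde-gt}. The paper instead goes through the dual representation of Theorem~\ref{prop: dual-repres-bsvie}: it translates sub time-consistency into the penalty inequality of Proposition~\ref{prop: charact-sub time-cons}(iii), writes both sides explicitly in terms of $g^*$, and extracts $g^*(t,v,\cdot)\ge g^*(s,v,\cdot)$ by varying the measures, hence $g(t,\cdot,\cdot)\le g(s,\cdot,\cdot)$ by biconjugation. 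Your approach is more elementary and avoids the detour through duality, while the paper's route has the advantage of showcasing (and giving an independent application of) the new dual representation for BSVIE-induced risk measures. Both yield the same conclusion; yours is closer in spirit to the BSDE-family argument, with the key conceptual twist---which you identify correctly---that indexing by the \emph{evaluation} time rather than the horizon reverses the direction of the required monotonicity.
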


\noindent
\begin{proof}
a) Assume that sub time-consistency holds.
By the dual representation of $(\rho_{tu})_{t,u}$ in Proposition~\ref{prop: dual-repres-bsvie}, the penalty term of $\rho_{tu}$ is given by
\begin{equation*}
\alpha_{tu} (Q_t)= E_{Q_t} \left[\left. \int_t ^u g^*(t,v,q(t,v)) dv \right\vert \mathcal{F}_t \right]
\end{equation*}
for any $Q_t \in \mathcal{Q}_{tu}$.
Let $s, t, u \in [0,T]$ with $s \leq t \leq u$ and let $Q_s \in \mathcal{Q}_{st}, Q_t \in \mathcal{Q}_{tu}$ be fixed arbitrarily. Set now $\bar{Q}=Q_s \cdot Q_t$ the pasting of $Q_s$ on $[s,t]$ and of $Q_t$ on $[t,u]$, hence $\bar{Q} \in \mathcal{Q}_{su}$. Denote by $q(s,v)$, $q(t,v)$ and $\bar{q}(s,v)$ the corresponding processes as in~\eqref{eq: definition-Qt}.
By applying the characterization of penalty term of sub time-consistent risk measures (see Proposition~\ref{prop: charact-sub time-cons}(iii)), we obtain
\begin{eqnarray*}
&&\hspace{-8mm} {\displaystyle  E_{\bar{Q}} \Big[ \int_s ^u g^*(s,v,\bar{q}(s,v)) dv \Big\vert \mathcal{F}_s \Big]} \\
&&\hspace{-8mm} {\displaystyle\leq  E_{\bar{Q}} \Big[ \int_s ^t g^*(s,v,\bar{q}(s,v)) dv \Big\vert \mathcal{F}_s \Big] + E_{\bar{Q}} \Big[  E_{\bar{Q}} \Big[ \int_t ^u g^*(t,v,\bar{q}(t,v)) dv \Big\vert \mathcal{F}_t \Big] \Big\vert \mathcal{F}_s \Big]} ,
\end{eqnarray*}
hence
\begin{eqnarray}
&&\hspace{-8mm} {\displaystyle E_{\bar{Q}} \Big[ \int_t ^u g^*(s,v,\bar{q}(s,v)) dv \Big\vert \mathcal{F}_s \Big]  \leq  E_{\bar{Q}} \Big[  E_{\bar{Q}} \Big[ \int_t ^u g^*(t,v,\bar{q}(t,v)) dv \Big\vert \mathcal{F}_t \Big] \Big\vert \mathcal{F}_s \Big]} \notag \\
&&\hspace{-8mm} {\displaystyle E_{Q_s} \hspace{0mm}\Big[ \hspace{0mm}  E_{Q_t} \hspace{0mm} \Big[\hspace{0mm}  \int_t ^u \hspace{-2mm} g^*(s,v,\bar{q}(s,v)) dv \Big\vert \hspace{0mm} \mathcal{F}_t \Big] \Big\vert \hspace{0mm} \mathcal{F}_s \Big]  \hspace{0mm} \leq E_{Q_s} \hspace{0mm} \Big[ \hspace{0mm} E_{Q_t} \hspace{0mm} \Big[\hspace{0mm}  \int_t ^u \hspace{-2mm} g^*(t,v,\bar{q}(t,v)) dv \Big\vert \hspace{0mm} \mathcal{F}_t \Big] \Big\vert \hspace{0mm} \mathcal{F}_s \Big]} \notag\\
&&\hspace{-8mm}  {\displaystyle E_{Q_s} \Big[  E_{Q_t} \Big[ \int_t ^u \Big[ g^*(t,v,\bar{q}(t,v)) - g^*(s,v,\bar{q}(s,v))\Big] dv \Big\vert \mathcal{F}_t \Big] \Big\vert \mathcal{F}_s \Big]  \geq 0.} \label{eq: condition g-bsvie}
\end{eqnarray}
Since~\eqref{eq: condition g-bsvie} should hold for any $s \leq t \leq u$ and any $Q_s \in \mathcal{Q}_{st}, Q_t \in \mathcal{Q}_{tu}$, it follows that
\begin{equation*}
g^*(t,v,\bar{q}(t,v)) \geq  g^*(s,v,\bar{q}(s,v)) \quad \mbox{ for any } s \leq t, v \in [t,u] , \, \bar{q} \in \Bbb{R}^d.
\end{equation*}
Hence, $g(t, \cdot, \cdot) \leq g(s, \cdot, \cdot)$ for any $s \leq t$.\smallskip

Conversely, assume that $g(t,\cdot ,\cdot)$ is decreasing in $t$, in the meaning specified above. We are going to prove that
\begin{equation} \label{eq: sub tc-BSVIE-proof}
\rho_{st} (-\rho_{tu} (X))= \mathcal{E}^{g,V} (\mathcal{E}^{g,V} (-X \vert \mathcal{F}_t) \vert \mathcal{F}_s) \leq \mathcal{E}^{g,V} (-X \vert \mathcal{F}_s)=\rho_{su} (X)
\end{equation}
for any $s \leq t \leq u$ and $X \in L^2(\mathcal{F}_u)$.
By definition of $\mathcal{E}^{g,V}$, the left-hand and right-hand sides of the previous equation can be rewritten as follows:
\begin{eqnarray}
&& \mathcal{E}^{g,V} (\mathcal{E}^{g,V} (-X \vert \mathcal{F}_t) \vert \mathcal{F}_s)  \notag \\
=&&  \mathcal{E}^{g,V} (-X \vert \mathcal{F}_t)+ \int_s ^t g(s,v,Z(s,v)) \, dv - \int_s ^t Z(s,v) \, dB_v \notag \\
=&& -X + \int_t ^u g(t,v,\tilde{Z} (t,v) \, dv - \int_t^u  \tilde{Z}(t,v) \, dB_v +  \int_s ^t  g(s,v,Z(s,v)) \, dv -  \int_s ^t  Z(s,v) \, dB_v  \notag \\
=&&  -X +\int_s ^u \Big[g(s,v,Z(s,v)) 1_{[s,t]}(v) + g(t,v,\tilde{Z} (t,v)) 1_{(t,u]}(v) \Big] \, dv \notag \\
&&  - \int_s^u \big[ Z(s,v) 1_{[s,t]} (v) +\tilde{Z}(t,v)1_{(t,u]} (v) \big] \, dB_v \label{eq: sub tc-BSVIE-proof-1}
\end{eqnarray}
and
\begin{equation}  \label{eq: sub tc-BSVIE-proof-2}
\mathcal{E}^{g,V} (-X \vert \mathcal{F}_s)  = -X +\int_s ^u g(s,v,\hat{Z}(s,v) \, dv - \int_s^u \hat{Z} (s,v) \, dB_v.
\end{equation}
Furthermore, by decreasing monotonicity of $g(t,\cdot,\cdot)$ in $t$ and by~\eqref{eq: sub tc-BSVIE-proof-1} it follows that
\begin{eqnarray*}
&\;&\mathcal{E}^{g,V} (\mathcal{E}^{g,V} (-X \vert \mathcal{F}_t) \vert \mathcal{F}_s) \\
&\leq & -X +\int_s ^u \left [g(s,v,Z(s,v)) 1_{[s,t]}(v) + g(s,v,\tilde{Z} (t,v)) 1_{(t,u]}(v) \right] \, dv \\
&\:&- \int_s^u \left[ Z(s,v) 1_{[s,t]} (v) +\tilde{Z}(t,v)1_{(t,u]} (v) \right] \, dB_v .
\end{eqnarray*}
Set now
\begin{equation*}
\bar{Z}_t (s,v)=  Z(s,v) 1_{[s,t]}(v) +\tilde{Z}(t,v)1_{(t,u]}(v), \quad v \geq s .
\end{equation*}
By the arguments above,
\begin{equation} \label{eq: sub tc-BSVIE-proof-3}
\mathcal{E}^{g,V} (\mathcal{E}^{g,V} (-X \vert \mathcal{F}_t) \vert \mathcal{F}_s)  \leq -X +\int_s ^u g(s,v,\bar{Z}_t(s,v) \, dv - \int_s^u  \bar{Z}_t(s,v)  \, dB_v.
\end{equation}
Sub time-consistency~\eqref{eq: sub tc-BSVIE-proof} then follows by comparing~\eqref{eq: sub tc-BSVIE-proof-2} and~\eqref{eq: sub tc-BSVIE-proof-3} and by the uniqueness of the solution of a BSVIE.

\smallskip
\noindent
b) The case of time-consistency can be obtained by replacing inequalities with equalities in the proof above.
\end{proof}

\medskip
Notice that a BSVIE of the form~\eqref{eq: sub tc-BSVIE-proof-2}, that is with a final condition $X$ that does not depend on $t$ and with a driver $g$ independent of $y$, reduces to a BSDE when the driver $g(t,v,z)$ is constant in $t$.

\bigskip
An alternative proof of item a) could be given in terms of a Converse Comparison Theorem for BSVIEs (similarly to Theorem 4.1 of Briand et al.\cite{BCHMPeng} for BSDEs) if that result was available.
However, while for BSDEs the Converse Comparison Theorem has been proved by Briand et al.~\cite{BCHMPeng}, for BSVIEs we are not aware of any similar result. We prove below that a Converse Comparison Theorem holds also for BSVIEs.\smallskip

We recall that a BSVIE~\eqref{eq: BSVIE} is related to the following family of BSDEs parameterized by $t$:
\begin{equation} \label{eq: def eta}
\eta (r;t, \xi_t)= \xi _t + \int_r^T \bar{g}(v,\zeta(v;t);t) dv - \int_r^T \zeta(v;t) dB_v, \quad r \in [t,T]
\end{equation}
where
\begin{equation*}
\zeta(v;t)=Z(t,v); \quad \bar{g}(v,\zeta(v;t);t)=g(t,v,Z(t,v))\quad \mbox{and} \quad Y_t= \eta(t;t, \xi_t).
\end{equation*}
We remark again that the drivers above do not depend on $y$. In this way, translation invariance of the associated risk measure is not guaranteed.
We are then able to prove a Converse Comparison Theorem for BSVIEs.

\begin{theorem}(Converse Comparison Theorem for BSVIEs) \label{prop: weak-converse-comparison-BSVIE}
Let $Y_t^1$ and $Y_t^2$ be two BSVIEs as in~\eqref{eq: BSVIE} with drivers $g^1$ and $g^2$ and terminal condition $\xi_t$ and let $\eta^1$ and $\eta^2$ be the corresponding families defined in~\eqref{eq: def eta}.

\noindent
a) If $\eta^1 (r;t, \xi_t) \leq \eta^2 (r;t, \xi_t)$ for any $t \in [0,T]$, $r \in [t,T]$ and $\xi_t \in L^2 (\mathcal{F}_T)$, then $g^1 \leq g^2$, that is $g^1(t,v,z) \leq g^2(t,v,z)$ for any $t \in [0,T], v \in [t,T], z \in \Bbb R^d$.

\noindent
b) If $Y_t^{1,X} \leq Y_t^{2,X}$ for any $t \in [0,T]$ and $X \in L^2 (\mathcal{F}_T)$ and if $g^i(t,v,0)=0$ for $i=1,2$, then $g^1 \leq g^2$.
\end{theorem}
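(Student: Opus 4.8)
The plan is to exploit the representation of the BSVIE through its associated family of BSDEs, reducing the statement to the classical Converse Comparison Theorem of Briand et al.~\cite{BCHMPeng} applied parameter-by-parameter in $t$. For part a), I would fix $t \in [0,T]$ and observe that $\eta^i(\cdot;t,\xi_t)$ solves the BSDE~\eqref{eq: def eta} on $[t,T]$ with driver $\bar{g}^i(v,\zeta;t) = g^i(t,v,\zeta)$ (independent of $y$) and the \emph{same} terminal condition $\xi_t$. The hypothesis $\eta^1(r;t,\xi_t) \leq \eta^2(r;t,\xi_t)$ for all $r \in [t,T]$ and all $\xi_t \in L^2(\mathcal{F}_T)$ is precisely the comparison-of-solutions hypothesis required to invoke the Converse Comparison Theorem for BSDEs on the horizon $[t,T]$. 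Concretely, following the localization argument of~\cite{BCHMPeng}, Thm.~4.1 (see also Jiang~\cite{jiang}, Lemma 2.1), for fixed $v \in [t,T]$ and $z \in \mathbb{R}^d$ one recovers the driver via
\begin{equation*}
\bar{g}^i(v,z;t) = \lim_{\varepsilon \to 0^+} \frac{\eta^i(v;t,\, z\cdot(B_{v+\varepsilon}-B_v)) - 0}{\varepsilon},
\end{equation*}
with convergence in $L^p$ (and $P$-a.s. along a subsequence). Since $\eta^1 \leq \eta^2$ pointwise, passing to the limit yields $\bar{g}^1(v,z;t) \leq \bar{g}^2(v,z;t)$, i.e. $g^1(t,v,z) \leq g^2(t,v,z)$. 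As $t$ was arbitrary, this gives $g^1 \leq g^2$ on all of $\Delta$.

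For part b), the available hypothesis is weaker: one only knows $Y_t^{1,X} \leq Y_t^{2,X}$, where $Y_t = \eta(t;t,-X)$, so one has control only on the \emph{diagonal} value $r=t$ of each family rather than on the whole trajectory $\eta(\cdot;t,\cdot)$. This is where the extra normalization assumption $g^i(t,v,0)=0$ enters. My plan is to choose, for fixed $t$, terminal conditions of the infinitesimal form $X = -z\cdot(B_{t+\varepsilon}-B_t)$ (which is $\mathcal{F}_{t+\varepsilon}$-measurable, hence admissible) and to extract the driver at the single point $r=t$. Because $g^i(t,v,0)=0$, the solution with zero terminal data is identically zero, so the difference quotient
\begin{equation*}
\frac{Y_t^{i,\,z\cdot(B_{t+\varepsilon}-B_t)}}{\varepsilon} \longrightarrow g^i(t,t,z), \qquad \varepsilon \to 0^+,
\end{equation*}
recovers $g^i$ on the diagonal $v=t$. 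The normalization is what guarantees the constant term vanishes so that the limit isolates $g^i(t,t,z)$ cleanly. From $Y_t^{1,X} \leq Y_t^{2,X}$ one then obtains $g^1(t,t,z) \leq g^2(t,t,z)$ for all $t$ and $z$.

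The main obstacle is upgrading the diagonal inequality $g^1(t,t,z)\leq g^2(t,t,z)$ in part b) to the full statement $g^1(t,v,z)\leq g^2(t,v,z)$ for all $v\geq t$. My plan to close this gap is to fix the first argument $t$ and vary the measurability horizon of the test position: for $v \in [t,T]$, apply the diagonal recovery at the time point $v$ but to the BSVIE started at $t$, using terminal perturbations concentrated near time $v$, namely $X = -z\cdot(B_{v+\varepsilon}-B_v)$. Since $Y_t^{i,X} = \eta^i(t;t,-X)$ and the driver $\bar g^i(\cdot\,;t)$ is independent of $y$, the linearization of the BSDE on $[t,T]$ lets the perturbation at time $v$ propagate back to $r=t$ through a (strictly positive) change-of-measure density, so that the sign of $g^1-g^2$ at $(t,v,z)$ is faithfully transmitted to the sign of $Y_t^{1,X}-Y_t^{2,X}$; the normalization again ensures no spurious constant contaminates the limit. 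I expect the delicate point to be justifying the interchange of the limit $\varepsilon\to 0$ with the backward propagation and controlling the Girsanov density uniformly, which should follow from the Lipschitz and square-integrability standing assumptions together with the estimates in~\cite{EKPQ}, Prop.~2.2.
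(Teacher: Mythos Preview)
Your argument for part a) is correct and coincides with the paper's: for each fixed $t$, the BSDE converse comparison theorem of Briand et al.\ applied to the drivers $\bar g^i(\cdot,\cdot;t)=g^i(t,\cdot,\cdot)$ yields $g^1(t,\cdot,\cdot)\le g^2(t,\cdot,\cdot)$.

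For part b), however, there is a genuine gap. Your plan is to take $X_\varepsilon=z\cdot(B_{v+\varepsilon}-B_v)$ with $v>t$ and read off $g^i(t,v,z)$ from $Y_t^{i,X_\varepsilon}/\varepsilon$. But $Y_t^{i,X_\varepsilon}=\eta^i(t;t,X_\varepsilon)=\mathcal E^{\bar g^i_t}\big(\eta^i(v;t,X_\varepsilon)\,\big|\,\mathcal F_t\big)$, and only the \emph{inner} quantity $\eta^i(v;t,X_\varepsilon)/\varepsilon$ converges to $g^i(t,v,z)$ by the Jiang formula. The outer operator $\mathcal E^{\bar g^i_t}(\cdot\mid\mathcal F_t)$ is nonlinear and, crucially, $i$-dependent: after dividing by $\varepsilon$ and linearizing, the best you obtain is a limit of the form $E_{Q^i}\big[g^i(t,v,z)\,\big|\,\mathcal F_t\big]$ plus a contribution from the integral $\int_t^v[\bar g^1_t-\bar g^2_t](s,\zeta_\varepsilon(s))\,ds$, which is of the same order $\varepsilon$ and does not vanish in the quotient. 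Since $Q^1\neq Q^2$ in general, the inequality you extract does not give $g^1(t,v,z)\le g^2(t,v,z)$ pointwise. Your ``faithful transmission of sign through a positive density'' is really a forward comparison statement, not a converse one, and the integral over $[t,v]$ cannot be absorbed by Girsanov alone.

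The paper avoids this by \emph{propagating the hypothesis forward in $r$} rather than pulling the driver backward to $t$. Using translation invariance, time-consistency of the BSDE with driver $\bar g^i_t$, the weak homogeneity $\mathcal E^{\bar g^i_t}(1_A X\mid\mathcal F_r)=1_A\,\mathcal E^{\bar g^i_t}(X\mid\mathcal F_r)$ for $A\in\mathcal F_r$ (valid precisely because $g^i(t,v,0)=0$), and strict monotonicity of $\mathcal E^{\bar g^2_t}$, one shows that $\eta^1(t;t,X)\le\eta^2(t;t,X)$ for all $X$ forces $\eta^1(r;t,X)\le\eta^2(r;t,X)$ for every $r\in[t,T]$ and every $X$. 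This reduces part b) to part a). Thus the normalization is used not to ``clean'' a limit at $r=t$, but to enable the localization-by-$1_A$ argument that upgrades the single-time inequality to all intermediate times.
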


We recall that, for BSDEs, it is sufficient to require that $\mathcal{E}^{g^1}(X) \leq \mathcal{E}^{g^2}(X)$ holds for any $X \in L^2 (\mathcal{F}_T)$ to guarantee that $\mathcal{E}^{g^1}_t(X) \leq \mathcal{E}^{g^2}_t(X)$ for any $X \in L^2 (\mathcal{F}_T)$. This is mainly due to time-consistency of $g$-expectations. See Theorem 4.4 of Briand et al.~\cite{BCHMPeng} for details. The proof of item b) is based on the argument above.\medskip

\noindent
\begin{proof}
a) By~\eqref{eq: def eta}, $\eta^1(\cdot;t)$ and $\eta^2(\cdot; t)$ are two BSDEs parameterized by $t$.
By applying the Converse Comparison Theorem for BSDEs (see Theorem 4.1 of Briand et al.~\cite{BCHMPeng} and Lemma 2.1 of Jiang~\cite{jiang}) to $\eta^1(\cdot;t), \eta^2(\cdot; t)$, it follows that $\bar{g}^1(v,\zeta;t) \leq \bar{g}^2 (v, \zeta;t)$ for any $v \geq r \geq t$ and $\zeta \in \Bbb R^d$. Hence $g^1(t,v,z) \leq g^2(t,v,z)$ for any $t \in [0,T], v \in [t,T], z \in \Bbb R^d$.

\smallskip
\noindent
b) First of all, we prove that, given two BSDEs with drivers $g^1$ and $g^2$ satisfying $g^i(v,0)=0$ and a fixed $t$,
\begin{eqnarray}
&&\mathcal{E}^{g^1}(X | \mathcal{F}_t) \leq \mathcal{E}^{g^2}(X | \mathcal{F}_t) \mbox{ for any } X \in L^2 (\mathcal{F}_T) \notag \\
&&\Rightarrow \mathcal{E}^{g^1}(X | \mathcal{F}_r) \leq \mathcal{E}^{g^2}(X | \mathcal{F}_r) \mbox{ for any } X \in L^2 (\mathcal{F}_T) \mbox{ and } r \in [t,T]. \label{eq: star}
\end{eqnarray}
This implication extends a similar one in the proof of Theorem 4.4 of Briand et al.~\cite{BCHMPeng} where $t=0$.
Assume now that, for a given fixed $t$,  $\mathcal{E}^{g^1}(X | \mathcal{F}_t) \leq \mathcal{E}^{g^2}(X | \mathcal{F}_t)$ for any $X \in L^2 (\mathcal{F}_T)$.
By using a similar approach to those in Theorem 4.4 of Briand et al.~\cite{BCHMPeng} and Lemma 4.5 of Coquet et al.~\cite{CHMPeng},
\begin{eqnarray*}
0=\mathcal{E}^{g^1}\left(\left. X - \mathcal{E}^{g^1}(X | \mathcal{F}_r) \right| \mathcal{F}_t\right) &\leq & \mathcal{E}^{g^2}\left(\left. X - \mathcal{E}^{g^1}(X | \mathcal{F}_r) \right| \mathcal{F}_t\right) \\
&=&\mathcal{E}^{g^2}\left( \left. \mathcal{E}^{g^2}\left( \left. X - \mathcal{E}^{g^1}(X | \mathcal{F}_r) \right| \mathcal{F}_r\right) \right | \mathcal{F}_t\right) \\
&=&\mathcal{E}^{g^2}\left( \left. \mathcal{E}^{g^2} (X | \mathcal{F}_r) - \mathcal{E}^{g^1}(X | \mathcal{F}_r) \right| \mathcal{F}_t\right)
\end{eqnarray*}
for any $X \in L^2 (\mathcal{F}_T)$ and $r \in [t,T]$, where the first and the last equality are due to translation invariance.
Fix now $r \in [t,T]$ arbitrarily and consider $\xi=X1_A$ with $A=\{\mathcal{E}^{g^2} (X | \mathcal{F}_r) < \mathcal{E}^{g^1}(X | \mathcal{F}_r) \} \in \mathcal{F}_r$. On the one hand, by the arguments above,
\begin{equation*}
\mathcal{E}^{g^2}\left( \left. \mathcal{E}^{g^2} (\xi | \mathcal{F}_r) - \mathcal{E}^{g^1}(\xi | \mathcal{F}_r) \right| \mathcal{F}_t\right) \geq 0.
\end{equation*}
On the other hand,
\begin{eqnarray*}
\mathcal{E}^{g^2}\left( \left. \mathcal{E}^{g^2} (\xi | \mathcal{F}_r) - \mathcal{E}^{g^1}(\xi | \mathcal{F}_r) \right| \mathcal{F}_t\right) \hspace{-1mm} &=& \hspace{-1mm} \mathcal{E}^{g^2}\left(\left. 1_A  \left( \mathcal{E}^{g^2} (X | \mathcal{F}_r) - \mathcal{E}^{g^1}(X | \mathcal{F}_r) \right) \right| \mathcal{F}_t\right) \\
\hspace{-1mm} & \leq & \hspace{-1mm} \mathcal{E}^{g^2}(0| \mathcal{F}_t) =0,
\end{eqnarray*}
because $\mathcal{E}^{g^i}(X 1_A| \mathcal{F}_r) = 1_A \mathcal{E}^{g^i}(X | \mathcal{F}_r) $ for any $A \in \mathcal{F}_r$ holds because of normalization of $g^i$ (see Peng~\cite{peng97}) and the last equality is due to $g^2(t,v,0)=0$.
Hence,
\begin{equation*}
\mathcal{E}^{g^2}\left(\left. 1_A  \left( \mathcal{E}^{g^2} (X | \mathcal{F}_r) - \mathcal{E}^{g^1}(X | \mathcal{F}_r) \right) \right| \mathcal{F}_t\right)=0.
\end{equation*}
Since $1_A  \left( \mathcal{E}^{g^2} (X | \mathcal{F}_r) - \mathcal{E}^{g^1}(X | \mathcal{F}_r) \right) \leq 0$ and strictly negative with probability equal to $P(A)$, strict monotonicity of conditional $g$-expectation implies that
\begin{equation*}
1_A  \left( \mathcal{E}^{g^2} (X | \mathcal{F}_r) - \mathcal{E}^{g^1}(X | \mathcal{F}_r) \right)=0, \quad P-a.s.,
\end{equation*}
then $\mathcal{E}^{g^2} (X | \mathcal{F}_r) \leq \mathcal{E}^{g^1}(X | \mathcal{F}_r)$, $P$-a.s., for $r \in [t,T]$. This concludes the proof of~\eqref{eq: star}.
\smallskip

Going back to BSVIEs, denote by $\eta^{t,X}_{i,r}=\eta^i (r;t, X)$ and by $\bar{g}^t_i= \bar{g}^i(\cdot,\cdot;t)$ for $i=1,2$, where $\eta$ and $\bar{g}$ are defined above.
Fix now $t$ arbitrarily. Assuming that $Y_t^{1,X} \leq Y_t^{2,X}$ for any $X \in L^2 (\mathcal{F}_T)$ is equivalent to assuming that $\eta^{t,X}_{1,t} \leq \eta^{t,X}_{2,t}$ or, also, to $\mathcal{E}^{\bar{g}^t_1} (X | \mathcal{F}_t)\leq \mathcal{E}^{\bar{g}^t_2} (X | \mathcal{F}_t)$ for any $X \in L^2 (\mathcal{F}_T)$. By~\eqref{eq: star}, it follows that $Y_t^{1,X} \leq Y_t^{2,X}$ for any $X \in L^2 (\mathcal{F}_T)$ implies that $\mathcal{E}^{\bar{g}^t_1} (X | \mathcal{F}_r)\leq \mathcal{E}^{\bar{g}^t_2} (X | \mathcal{F}_r)$ for any $r \in [t,T]$, hence $\eta^{t,X}_{1,r} \leq \eta^{t,X}_{2,r}$ for any $r \in [t,T]$. The thesis then follows by item a) where it is not necessary to have $\xi_t$ but it is enough to consider $X \in L^2 (\mathcal{F}_T)$.
\end{proof}

\subsubsection{H-longevity and BSVIEs}

The next results investigate under which conditions on the driver $g$ h-longevity of $(\rho_{tu})_{t,u}$ is fulfilled.

\begin{corollary}
If $g(t,\cdot,\cdot)$ is decreasing in $t$ and $g(t,v,0) \geq 0$ for any $t \leq v$, then h-longevity holds.
\end{corollary}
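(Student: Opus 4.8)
The plan is to combine the two pillars already established for BSVIE-generated risk measures: the characterization of sub time-consistency through monotonicity of the driver in Theorem~\ref{prop: sub-tc-bsvie}, and the purely axiomatic implication, recorded in Remark~\ref{rem: tc and longevity}, that sub time-consistency together with $\rho_{tu}(0) \geq 0$ yields h-longevity. Thus the corollary reduces to three short steps, of which only the middle one carries any real content.

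First, since $g(t,\cdot,\cdot)$ is decreasing in $t$ by hypothesis, Theorem~\ref{prop: sub-tc-bsvie}(a) applies directly and gives that $(\rho_{tu})_{t,u}$ is sub time-consistent, that is $\rho_{st}(-\rho_{tu}(X)) \leq \rho_{su}(X)$ for all $s \leq t \leq u$ and $X \in L^2(\mathcal{F}_u)$.

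Second, I would deduce $\rho_{tu}(0) \geq 0$ from the sign condition $g(t,v,0) \geq 0$. Fix $t \leq u$; then $\rho_{tu}(0) = Y_t$ is the $Y$-component of the BSVIE~\eqref{eq: BSVIE} with terminal condition $\xi_t = 0$, namely $Y_t = \int_t^u g(t,s,Z(t,s))\, ds - \int_t^u Z(t,s)\, dB_s$. Linearizing the driver in its $z$-argument exactly as in the proofs of Proposition~\ref{prop: longevity-BSDE} and Theorem~\ref{prop: dual-repres-bsvie}, I write $g(t,s,Z(t,s)) = g(t,s,0) + \Delta_z g(t,s) \cdot Z(t,s)$ with $\Delta_z g(t,s)$ bounded by the Lipschitz constant of $g$; the Girsanov change of measure that absorbs the drift $\Delta_z g(t,\cdot)$ then produces a $Q_t \in \mathcal{Q}_{tu}$ under which, after taking the $\mathcal{F}_t$-conditional expectation, $\rho_{tu}(0) = E_{Q_t}[\int_t^u g(t,s,0)\, ds \mid \mathcal{F}_t]$. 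Since $g(t,s,0) \geq 0$ for every $s \geq t$, the right-hand side is non-negative.

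Third, sub time-consistency and $\rho_{tu}(0) \geq 0$ are precisely the hypotheses of Remark~\ref{rem: tc and longevity}, whose chain $\rho_{st}(X) \leq \rho_{st}(X - \rho_{tu}(0)) = \rho_{st}(-\rho_{tu}(X)) \leq \rho_{su}(X)$ (first inequality by monotonicity and $\rho_{tu}(0)\geq 0$, equality by $\mathcal{F}_t$-translation invariance, last inequality by~\eqref{eq: sub TC}) delivers h-longevity for all $X \in L^2(\mathcal{F}_t)$. The main --- indeed only --- obstacle is the second step: once the linearization is written down the Girsanov argument is routine, and the uniform Lipschitz assumption on $g$ guarantees that the density defining $Q_t$ is a genuine element of $\mathcal{Q}_{tu}$.
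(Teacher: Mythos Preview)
Your proof is correct and follows exactly the paper's three-step structure: Theorem~\ref{prop: sub-tc-bsvie} gives sub time-consistency, then $\rho_{tu}(0)\geq 0$ is verified, then Remark~\ref{rem: tc and longevity} closes. The only difference is that in the second step the paper bypasses the linearization/Girsanov argument entirely by observing that the pair $\bigl(Y_t,Z(t,v)\bigr)=\bigl(\int_t^u g(t,v,0)\,dv,\,0\bigr)$ solves the BSVIE with zero terminal condition, so by uniqueness $\rho_{tu}(0)=\int_t^u g(t,v,0)\,dv\geq 0$ directly; your route is slightly longer but equally valid (and arguably more robust when $g(t,\cdot,0)$ is genuinely random in its second argument).
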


\noindent
\begin{proof}
Since $g(t,\cdot ,\cdot)$ is decreasing in $t$, sub time-consistency follows by Theorem~\ref{prop: sub-tc-bsvie}. We then show here $\rho_{tu}(0) \geq 0$ for any $t \leq u$, so that we can conclude by Remark~\ref{rem: tc and longevity}.
We prove now that $g(t,v,0) \geq 0$ for any $t \leq v$ implies that $\rho_{tu}(0) \geq 0$ for any $t \leq u$.
Indeed,
\begin{equation*}
\rho_{tu}(0)=Y_t= \int_t^u g(t,v,Z(t,v)) dv - \int_t ^u Z(t,v) dv.
\end{equation*}
Since the solution is $(Y_t=\int_t ^u g(t,v,0) dv; Z(t,v)=0)$, then $\rho_{tu}(0) \geq 0$ by the assumption $g(t,v,0) \geq 0$ for any $t \leq v$.
\end{proof}

\medskip
\noindent
In reality, we can do something more.
\begin{proposition} \label{prop: longevity-BSVIE}
If $g(s,v,0) \geq 0$ for any $s \leq v$, then h-longevity holds.
Furthermore, $\gamma(s,t,u,X)=E_{\widetilde{Q}_{s,X}} \left[\int_t^u g(s,v,0) dv | \mathcal{F}_s\right]$ for any $s \leq t \leq u$, where $\widetilde{Q}_{s,X}$ is a suitable probability measure depending on $X$, with density
\begin{equation*} 
\frac{d \widetilde{Q}_{s,X}}{dP}\triangleq \exp \left\{ - \frac 12 \int_s^u \vert \Delta_z g(s,v)\vert ^2 dv + \int_s^u \Delta_z g(s,v) dB_v \right\}.
\end{equation*}
Here above $\Delta_z g(v)= (\Delta_z ^i g(v))_{i=1,...,d}$ and
\begin{equation*}
\Delta_z ^i g(v) \triangleq \frac{g(s,v,Z^u(s,v))- g(s,v,\bar{Z}^t(s,v))}{d \left( Z^{u,i} (s,v)- \bar{Z}^{t,i} (s,v) \right)} 1_{\{Z^{u,i} (s,v)\neq \bar{Z}^{t,i} (s,v)\}}.
\end{equation*}
\end{proposition}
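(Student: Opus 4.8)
The plan is to mirror the proof of Proposition~\ref{prop: longevity-BSDE}, exploiting the fact that for a \emph{fixed} evaluation time $s$ the BSVIE evaluation $\rho_{su}(X)=\mathcal{E}^{g,V}(-X\vert\mathcal{F}_s)$ involves only the diagonal driver $g(s,\cdot,\cdot)$ and the slice $Z(s,\cdot)$; hence the horizon comparison reduces to comparing two \emph{BSDEs} with common driver $g(s,\cdot,\cdot)$, common terminal condition $-X$, but different horizons $t$ and $u$. First I would fix $s\le t\le u$ and $X\in L^p(\mathcal{F}_t)$ and write the two evaluations at time $s$ as
$$\rho_{st}(X)=-X+\int_s^t g(s,v,Z^t(s,v))\,dv-\int_s^t Z^t(s,v)\,dB_v,$$
$$\rho_{su}(X)=-X+\int_s^u g(s,v,Z^u(s,v))\,dv-\int_s^u Z^u(s,v)\,dB_v,$$
where $Z^t(s,\cdot)$ and $Z^u(s,\cdot)$ are the $Z$-components of the two BSVIE solutions.

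Next I would extend the shorter slice by setting $\bar{Z}^t(s,v)=Z^t(s,v)$ for $v\le t$ and $\bar{Z}^t(s,v)=0$ for $t<v\le u$, and put $\widetilde{Z}(s,v)=Z^u(s,v)-\bar{Z}^t(s,v)$. Since $g(s,v,\bar{Z}^t(s,v))=g(s,v,0)$ on $(t,u]$, subtracting the two equations yields
$$\rho_{su}(X)-\rho_{st}(X)=\int_s^u\big[g(s,v,Z^u(s,v))-g(s,v,\bar{Z}^t(s,v))\big]\,dv+\int_t^u g(s,v,0)\,dv-\int_s^u\widetilde{Z}(s,v)\,dB_v.$$
Using the Lipschitz assumption in $z$ I would linearize the driver increment as $g(s,v,Z^u(s,v))-g(s,v,\bar{Z}^t(s,v))=\Delta_z g(s,v)\cdot\widetilde{Z}(s,v)$ with $\Delta_z g$ as in the statement, noting that $\Delta_z g(s,\cdot)\in\Bbb H^2_{[s,u]}(\Bbb R^d)$. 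This realizes $\delta\rho_s\triangleq\rho_{su}(X)-\rho_{st}(X)$ as the value at $s$ of a linear BSDE on $[s,u]$ with terminal datum $\int_t^u g(s,v,0)\,dv\ge 0$; Proposition 2.2 of El Karoui, Peng and Quenez~\cite{EKPQ} then gives $\delta\rho_s\ge 0$, i.e. h-longevity.

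To identify $\gamma$ I would apply Girsanov's theorem with $\widetilde{Q}_{s,X}$ defined by the stated density, so that $B^{\widetilde{Q}_{s,X}}_v\triangleq B_v-B_s-\int_s^v\Delta_z g(s,r)\,dr$ is a $\widetilde{Q}_{s,X}$-Brownian motion on $[s,u]$; the bound $\vert\Delta_z g\vert\le C$ from the Lipschitz constant ensures that $\widetilde{Q}_{s,X}$ is a probability measure equivalent to $P$. Under this change of measure the drift and It\^o terms collapse to $-\int_s^u\widetilde{Z}(s,v)\,dB^{\widetilde{Q}_{s,X}}_v$, whence
$$\rho_{su}(X)-\rho_{st}(X)=\int_t^u g(s,v,0)\,dv-\int_s^u\widetilde{Z}(s,v)\,dB^{\widetilde{Q}_{s,X}}_v.$$
Taking $E_{\widetilde{Q}_{s,X}}[\,\cdot\,\vert\mathcal{F}_s]$ kills the stochastic integral and yields $\gamma(s,t,u,X)=E_{\widetilde{Q}_{s,X}}[\int_t^u g(s,v,0)\,dv\vert\mathcal{F}_s]$, nonnegative by hypothesis.

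The step I expect to demand the most care is the reduction of the BSVIE evaluation to a single BSDE for fixed $s$: one must check that $\rho_{su}(X)=Y_s$ genuinely solves the displayed equation with integrators over $[s,u]$ (equivalently, that it equals $\eta(s;s,-X)$ from the parameterized family~\eqref{eq: def eta}), and that $Z^t(s,\cdot)$, $Z^u(s,\cdot)$ are the legitimate $Z$-pieces of the respective BSVIE solutions, so that the extension $\bar{Z}^t$ and the linearization are justified. Everything else is a transcription of the BSDE argument, carrying the fixed first time-parameter $s$ along inside the driver $g(s,\cdot,\cdot)$.
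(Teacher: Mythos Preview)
Your proposal is correct and follows essentially the same route as the paper: write the two evaluations at the fixed time $s$, extend $Z^t(s,\cdot)$ by zero on $(t,u]$, subtract, linearize via the difference quotient $\Delta_z g(s,\cdot)$, and then apply Girsanov to extract $\gamma$. The only notable difference is the comparison step: you view $\delta\rho_s$ as the time-$s$ value of a linear \emph{BSDE} on $[s,u]$ (with $s$ frozen as a parameter in driver and terminal datum) and invoke Proposition~2.2 of El Karoui--Peng--Quenez~\cite{EKPQ}, whereas the paper phrases~\eqref{eq: linear BSVIE-longevity} as a linear \emph{BSVIE} (since $\Gamma^{t,u}_s=\int_t^u g(s,v,0)\,dv$ depends on $s$) and appeals to the BSVIE Comparison Theorem of Wang and Yong~\cite{wang-yong}. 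Both are legitimate here because one is only evaluating the equation at the single diagonal point $s$; your reduction-to-BSDE viewpoint is in fact the more elementary one and avoids importing the heavier Volterra comparison result, while the paper's citation keeps the Volterra framing consistent with the surrounding section.
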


\noindent
\begin{proof}
Let $s \leq t \leq u$ and let $X \in L^2 (\mathcal{F}_t)$ be fixed arbitrarily.
The risk measures $\rho_{st}$ and $\rho_{su}$ satisfy, respectively, the following BSVIEs:
\begin{eqnarray*}
\rho_{st}(X)&=& -X+ \int_s^t g(s,v,Z^t(s,v)) dv - \int_s^t Z ^t (s,v) dB_v \\
\rho_{su}(X)&=& -X+ \int_s^u g(s,v,Z^u (s,v)) dv - \int_s^u Z ^u (s,v) dB_v.
\end{eqnarray*}
Set now
\begin{equation*}
\bar{Z}^t (s,v)=\left\{
\begin{array}{rl}
Z^t (s,v);& s \leq v \leq t \\
0;& t<v \leq u
\end{array}
\right. ;
\quad \widetilde{Z} (s,v)= Z^u (s,v) - \bar{Z}^t (s,v).
\end{equation*}
Then
\begin{eqnarray}
\hspace{-2mm} &&\rho_{su}(X)-\rho_{st}(X) \notag\\
\hspace{-2mm} &=&  \int_s^u [g(s,v,Z^u (s,v))- g(s,v,\bar{Z}^t(s,v))] dv + \int_t^u g(s,v,\bar{Z}^t (s,v)) dv \notag \\
\hspace{-2mm} && - \int_s^u [Z ^u (s,v)- \bar{Z}^t (s,v)] dB_v - \int_t^u \bar{Z} ^t (s,v) dB_v \notag \\
\hspace{-2mm} &=&  \int_s^u [g(s,v,Z^u (s,v))- g(s,v,\bar{Z}^t(s,v))] dv + \int_t^u g(s,v,0) dv \notag \\
\hspace{-2mm} && - \int_s^u \widetilde{Z} (s,v) dB_v  \notag \\
\hspace{-2mm} &=&  \int_s^u \Delta_z g(s,v) \cdot \widetilde{Z} (s,v) dv - \int_s^u \widetilde{Z} (s,v) dB_v+ \int_t^u g(s,v,0) dv. \label{eq: bsvie-longevity-1}
\end{eqnarray}
Furthermore,~\eqref{eq: bsvie-longevity-1} can be rewritten as a linear BSVIE
\begin{equation} \label{eq: linear BSVIE-longevity}
\delta \rho_s= \Gamma^{t ,u}_s + \int_s^u \Delta_z g(s,v) \cdot \widetilde{Z} (s,v) dv - \int_s^u \widetilde{Z} (s,v) dB_v,
\end{equation}
where $\delta \rho_s \triangleq \rho_{su}(X)-\rho_{st}(X)$ and $\Gamma^{t,u}_s \triangleq \int_t^u g(s,v,0) dv$ represents the final condition at time $u$ (which depends on $t$ and also on $s$).

Since $\Gamma^{t,u}\geq 0$ for any $t$ by hypothesis and $\Delta_z g(s,v) \in \Bbb H^2_{[0,T]} (\Bbb R^d)$ by the assumption of $g$ Lipschitz in $z$, by the Comparison Theorem on BSVIEs (see Corollary 3.3 of Wang and Yong~\cite{wang-yong}) it follows the longevity, i.e. $\delta \rho_s \geq 0$ for any $s \leq t$.
Moreover, by applying Girsanov Theorem,~\eqref{eq: linear BSVIE-longevity} becomes
\begin{eqnarray*}
\delta \rho_s &=&  \int_t^u g(s,v,0) dv + \int_s^u \Delta_z g(s,v) \cdot \widetilde{Z}(s,v) dv - \int_s^u \widetilde{Z} (s,v) dB_v \\
&=& \int_t^u g(s,v,0) dv - \int_s^u \widetilde{Z}(s,v) dB^{\widetilde{Q}_{s,X}}_v,
\end{eqnarray*}
where $B^{\widetilde{Q}_{s,X}}_v \triangleq B_v-B_s- \int_s^v \Delta_z g(s,v) dv$, $v \geq s$, is a Brownian motion with respect to $\widetilde{Q}_{s,X}$ with initial value $B^{\widetilde{Q}_{s,X}}_s=0$. Hence, by taking the conditional expectation with respect to $\widetilde{Q}_{s,X}$,
\begin{equation*}
\gamma(s,t,u,X)=\delta \rho_s=E_{\widetilde{Q}_{s,X}} \left[\left. \int_t^u g(s,v,0) dv \right| \mathcal{F}_s\right].
\end{equation*}
It then follows that $\rho_{su}(X)-\rho_{st}(X)=E_{\widetilde{Q}_{s,X}} \left[\int_t^u g(s,v,0) dv | \mathcal{F}_s\right]$.
\end{proof}
\medskip

As discussed in Sections~\ref{sec: longevity} and~\ref{sec: BSDEg}, $\gamma$ may depend on the length of the time interval $[u,t]$, that is, $\gamma (s,t,u,X)=\gamma_{s,t}(h,X)$ with $h=u-t$ or, even, $\gamma_{s,t}(h)$ independent from $X$. The following example provides some cases covering the situation above.

\begin{example} \label{ex: longevity-bsvie}
Let $g(s,v,0) \geq 0$ for any $v \in [s,T]$. Hence, by the result above, h-longevity holds and $\gamma(s,t,u,X)=E_{\widetilde{Q}_{s,X}} \left[\int_t^u g(s,v,0) dv | \mathcal{F}_s\right]$ for any $X \in L^2(\mathcal{F}_t)$.\smallskip

a) If $g(s,v,0)=c_s$ for any $v \in [s,T]$, with $c_s\geq 0$, then $c_s$ is necessarily $\mathcal{F}_s$-measurable (since it should be measurable for any $v \geq s$) and, consequently,
\begin{equation*}
\gamma(s,t,u,X)=E_{\widetilde{Q}_{s,X}} \left[\left. \int_t^u g(s,v,0) dv \right| \mathcal{F}_s\right]= (u-t)c_s.
\end{equation*}
In other words, $\gamma$ only depends on the evaluation time $s$ and on $h=u-t$, that is, roughly speaking, on the length of the time interval over which there is an uncorrect use of the risk measure ($\rho_{su}$ versus $\rho_{st}$). \smallskip

b) If $g(s,v,0)=\exp(-r_s \, v)$ for any $v \in [0,T]$, with $r_s\geq 0$, then $r_s$ is necessarily $\mathcal{F}_s$-measurable (for the same arguments as above) and, consequently,
\begin{equation*}
\gamma(s,t,u,X)=\frac{e^{-r_s t} \left( 1-e^{-r_s (u-t)} \right)}{r_s}.
\end{equation*}
Hence $\gamma$ depends on the evaluation time $s$, on the ``right'' time horizon $t$ (referring to the measurability of $X$) and on the length of the time interval $[t,u]$.
Compared to the BSDE case (see Example~\ref{ex: longevity-bsde}), here $\gamma$ depends also on the evaluation time $s$. This is not surprising for BSVIEs.
\end{example}

Finally, we provide two examples of BSVIEs: the former with a linear driver, the latter going beyond the Lipschitz case and similarly to Section~\ref{sec: entrpic BSDE}.
\begin{example}\label{ex: linear Volterra}
Consider the driver $g(t,s,z)=a(t,s) \cdot z + b(t,s)$ for any $0 \leq t \leq s \leq T$ and $z \in \Bbb R^d$, where the $\Bbb R^d$-valued process $a(t,s)$ and $1$-dimensional process $b(t,s)$ are given.
By applying Girsanov Theorem, in the same line of Hu and {\O}ksendal~\cite{hu-oksendal}, the BSVIE associated to the linear driver above becomes
\begin{eqnarray*}
Y_t&=&-X+ \int_t ^T [a(t,s) \cdot Z(t,s) + b(t,s)] ds - \int_t^T Z(t,s) dB_s \\
&=&-X+ \int_t ^T b(t,s) ds - \int_t^T Z(t,s) dB^{\widetilde{Q}_t}_s \\
\end{eqnarray*}
where $\frac{d \widetilde{Q}_{t}}{dP}\triangleq \exp \left\{ - \frac 12 \int_t^T \vert a(t,s)\vert^2 ds + \int_t^T a(t,s) dB_s \right\}$ and $B^{\widetilde{Q}_{t}}_u \triangleq B_u- B_t- \int_t^u  a(t,s) ds$, for $u \geq t$, is a Brownian motion with respect to $\widetilde{Q}_{t}$ with initial value $B^{\widetilde{Q}_{t}}_t=0$. Hence, by taking the conditional expectation with respect to $\widetilde{Q}_{t}$, it holds that
\begin{equation*}
Y_t=  E_{\widetilde{Q}_{t}} \left[ \left. -X+ \int_t^T b(t,s) ds \right| \mathcal{F}_t\right].
\end{equation*}
Choosing $b(t,s) \geq 0$, the h-longevity holds.
\end{example}

\begin{example}\label{ex: quadratic Volterra}
Consider the driver $g(t,s,z)=b(t)\frac{\vert z \vert ^2}{2} +a(t,s)$ for any $0 \leq t \leq s \leq T$ and $z \in \Bbb R^d$, where the deterministic function $b$ is positive and the process $a$ is given.
Hence
\begin{eqnarray*}
Y_t&=&-X+ \int_t ^T \left[ b(t) \frac{\vert Z(t,s)\vert ^2}{2} +a(t,s)\right] ds - \int_t^T Z(t,s) dB_s \\
&=&-X+  \int_t ^T a(t,s) ds+ \int_t ^T b(t) \frac{\vert Z(t,s) \vert ^2}{2}  ds - \int_t^T Z(t,s) dB_s
\end{eqnarray*}
and, following the same arguments of~\cite{yong-recursive}, Example 3.1, it follows that
\begin{equation*}
Y_t= \frac{1}{b(t)} \ln E_P \left[\exp\left\{-b(t) \left( X-\int_t ^T a(t,s) ds \right) \right\} \Big\vert \mathcal{F}_t \right].
\end{equation*}
Whenever $a(t,s)$ is deterministic, $Y_t$ becomes
\begin{equation*}
Y_t= \frac{1}{b(t)} \ln E_P \left[e^{-b(t) X}   \Big\vert \mathcal{F}_t   \right] + \int_t^T a(t,s) ds,
\end{equation*}
that is a translation of the usual entropic risk measure. Choosing $a(t,s)>0$, {\color{blue}(strict)} h-longevity holds.
\end{example}

\subsection{Risk measures generated by a family of BSVIEs}

Suppose that, for any $t \leq u$, the risk measure $\rho_{tu}$ comes from a BSVIE with a driver $g_u$ depending on the maturity $u$. This means that
\begin{equation} \label{eq: rho_BSVIEg-family}
\rho_{tu}^{\mathcal{G}} (X)= \mathcal{E}^{g_u,V} \left(-X \vert \mathcal{F}_t \right), \quad X \in L^{2} (\mathcal{F}_u),
\end{equation}
where $\mathcal{E}^{g_u,V} \left(\xi \vert \mathcal{F}_t \right)$ denotes the $Y$-component of the solution $(Y_t,Z(t,s))_{t,s \in [0,T], s\geq t}$ of the following BSVIE with driver $g_u$:
\begin{equation} \label{eq: BSVIEg-family}
Y_t= \xi + \int_t^u g_u(t,s,Z(t,s)) ds - \int_t^u Z(t,s) dB_s.
\end{equation}
Assume now that $\mathcal{G}=(g_u)_{u \in [0,T]}$ is a family of drivers depending on the maturity $u$, independent of $y$, Lipschitz, and convex in $z$. Each risk measure $\rho^{\mathcal{G}}_{tu}$ is of type~\eqref{eq: rho_BSVIEg}.
\bigskip

By applying Theorem~\ref{prop: dual-repres-bsvie} with a driver $g_u$ parameterized by $u$, in a Brownian setting $\rho_{tu}^{\mathcal{G}}$ can be represented as
\begin{equation*} 
\rho_{tu}(X)=\rho_{tu}^{\mathcal{G}}(X)=\underset{Q_t \in \mathcal{Q}_{tu}}{\esssup} \left\{ E_{Q_t} \left[-X \vert \mathcal{F}_t \right] -\alpha^{\mathcal{G}}_{tu} (Q_t) \right\}, \quad X \in L^{2} (\mathcal{F}_u),
\end{equation*}
where $\mathcal{Q}_{tu}$ is defined in~\eqref{eq: definition-Qt},
$g_u^*(t,s,\cdot)$ denotes the convex conjugate of $g_u(t,s,\cdot)$ and the minimal penalty functional is given by
\begin{equation} \label{eq: dual-repr-bsvie-family}
\alpha^{\mathcal{G}}_{tu} (Q_t)= E_{Q_t} \left[\left. \int_t ^u g_u^*(t,s,q(t,s)) ds \right\vert \mathcal{F}_t \right].
\end{equation}
Furthermore, if $g_u(t,s, 0) = 0$ for any $t\leq s \leq u$ then $\rho_{tu}(0) = 0$ for any $u$. Differently from the risk measures generated by a single BSVIE but similarly to those generated by a family of BSDEs, in general $g_u(t,s, 0) = 0$ for any $t\leq s \leq u$ does \textit{not} imply the restriction property.

The following result shows that, for $g_u(t,s,0)=0$ for any $t,s, u$, the restriction property is satisfied only for risk measures induced by a single BSVIE. This result is not surprising in view of Proposition~\ref{prop: restriction-BSDE-family} for the case of BSDEs with a family of drivers.

\begin{proposition} \label{prop: restriction-BSVIE-family}
Let $g_u(t,s,0)=0$ for any $t,s,u$.
The restriction property~\eqref{restriction} holds if and only if $g_u$ is constant in $u$.
\end{proposition}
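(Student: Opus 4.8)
The plan is to prove the two implications separately. The forward implication is immediate, while the converse rests on the reduction of a long-horizon evaluation to a short-horizon one and then on the Converse Comparison Theorem for BSVIEs (Theorem~\ref{prop: weak-converse-comparison-BSVIE}).

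If $g_u$ is constant in $u$, say $g_u \equiv g$, then the family~\eqref{eq: BSVIEg-family} collapses to a single BSVIE with driver $g$ satisfying $g(t,s,0)=0$, and the restriction property follows at once from the implication (a)$\Rightarrow$(c) of Proposition~\ref{prop: norm-restrict-BSVIE-g}. For the converse, assume restriction holds, that is $\rho_{tu}(X)=\rho_{tv}(X)$ for all $t \leq u \leq v$ and $X \in L^2(\mathcal{F}_u)$, and fix $u \leq v$. The key step is to identify $\rho_{tv}(X)$, for $X \in L^2(\mathcal{F}_u)$, as the $Y_t$-component of the \emph{horizon-$u$} BSVIE driven by $g_v$. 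To do this, recall from~\eqref{eq: def eta} that for each fixed $t$ the horizon-$v$ BSVIE with driver $g_v$ and terminal condition $-X$ is solved, along the diagonal, by the BSDE $\eta_v(\cdot\,;t)$ with driver $g_v(t,\cdot,\cdot)$ on $[t,v]$ and terminal value $-X$. Since $X \in L^2(\mathcal{F}_u)$ and $g_v(t,s,0)=0$ for $s \in (u,v]$, the pair $(\eta_v(r;t),\zeta(r;t))=(-X,0)$ solves this BSDE on $(u,v]$, so by uniqueness the solution is constant equal to $-X$ there; hence $\eta_v(\cdot\,;t)$ restricted to $[t,u]$ is precisely the BSDE with driver $g_v(t,\cdot,\cdot)$, horizon $u$, and terminal condition $\eta_v(u;t)=-X$. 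Evaluating at $r=t$ shows that $\rho_{tv}(X)$ equals the $Y_t$-component of the horizon-$u$ BSVIE with driver $g_v$ and terminal condition $-X$.

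Consequently, restriction reads: the $Y_t$-components of the two horizon-$u$ BSVIEs with drivers $g_u$ and $g_v$ and common terminal condition $-X$ coincide, for every $X \in L^2(\mathcal{F}_u)$ and every $t \leq u$. Since $g_u(t,s,0)=g_v(t,s,0)=0$, I would then apply the Converse Comparison Theorem for BSVIEs (Theorem~\ref{prop: weak-converse-comparison-BSVIE}(b), with horizon $u$ in place of $T$) to both inequalities $Y_t^{g_u,X}\leq Y_t^{g_v,X}$ and $Y_t^{g_v,X}\leq Y_t^{g_u,X}$, obtaining $g_u(t,s,z)=g_v(t,s,z)$ for all $t\leq s\leq u$ and $z\in\Bbb R^d$. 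As $u\leq v$ are arbitrary, $g_u$ is constant in $u$, completing the proof.

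The delicate point is the reduction in the second paragraph: it is exactly the condition $g_v(t,s,0)=0$ on the tail $(u,v]$ that lets the horizon-$v$ evaluation of an $\mathcal{F}_u$-measurable position collapse to a horizon-$u$ evaluation with the \emph{same} driver $g_v$, thereby isolating the comparison of $g_u$ and $g_v$ purely on $[t,u]$. Without this normalization the two evaluations would differ already because of the contribution of the driver on $(u,v]$, and the argument would not separate the horizon effect from the effect of the driver. Once the reduction is in place, the newly established Converse Comparison Theorem for BSVIEs does the rest.
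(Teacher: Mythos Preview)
Your proof is correct, and the forward implication matches the paper's. For the converse, however, you take a genuinely different route. The paper argues directly via the parameterized BSDEs $\eta^{t,X}_{ru}$ and the derivative formula of Briand et al.\ and Jiang, i.e.\ it identifies each driver as a limit $\bar g^t_u(s,z)=\lim_{\varepsilon\to 0}\varepsilon^{-1}\eta^{t,\,z\cdot(B_{s+\varepsilon}-B_\varepsilon)}_{su}$, and then uses restriction to force the two limits (for horizons $u$ and $v$) to coincide. You instead first exploit the normalization $g_v(t,s,0)=0$ on $(u,v]$ to collapse the horizon-$v$ evaluation of an $\mathcal F_u$-measurable $X$ to the horizon-$u$ BSVIE with the \emph{same} driver $g_v$, and then invoke the paper's own Converse Comparison Theorem for BSVIEs (Theorem~\ref{prop: weak-converse-comparison-BSVIE}(b)) as a black box on $[0,u]$ to conclude $g_u=g_v$. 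Your approach is more modular and makes the role of the normalization on the tail $(u,v]$ explicit; the paper's approach is closer to the primitive ingredients (the derivative formula) and mirrors the BSDE argument in Proposition~\ref{prop: restriction-BSDE-family} more transparently. Both ultimately rest on the same circle of ideas, but yours leverages Theorem~\ref{prop: weak-converse-comparison-BSVIE} directly rather than reproving part of it in situ.
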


\noindent
\begin{proof}
Assume that the restriction property holds, i.e. $\rho_{tu}(X)=\rho_{tv}(X)$ for any $t \leq u \leq v$ and $X \in L^2(\mathcal{F}_u)$.
Similarly to~\eqref{eq: def eta}, denote by
\begin{equation*}
\eta^{t,X}_{ru}= -X + \int_r^u \bar{g}_u^t(s,\zeta_v^t) dv - \int_r^u \zeta_v^t dB_v, \quad r \in [t;u]
\end{equation*}
where
\begin{equation*}
\zeta^t_v=Z(t,v); \quad \bar{g}_u^t(v,\zeta^t_v)=g(t,v,Z(t,v))\quad \mbox{and} \quad \rho_{tu}(X)= \eta^{t,X}_{tu}.
\end{equation*}
Assumptions on $g_u$ guarantee that $\bar{g}^t_u(s,0)=0$ for any $s$ and that $\bar{g}^t_u(s,\cdot)$ is continuous in $s$.
Proceeding as in the proof of the Converse Comparison Theorem of Briand et al.~\cite{BCHMPeng}, Thm. 4.1, and of Jiang~\cite{jiang}, Lemma 2.1,
\begin{eqnarray*}
\bar{g}^t_u(s,z)=\lim _{\varepsilon \to 0} \frac{\eta^{t,z \cdot (B_{s + \varepsilon} -B_{\varepsilon})}_{su}}{\varepsilon}\\
\bar{g}^t_v(s,z)=\lim _{\varepsilon \to 0} \frac{\eta^{t,z \cdot (B_{s + \varepsilon} -B_{\varepsilon})}_{sv}}{\varepsilon},
\end{eqnarray*}
with convergence in $L^p$ with $p \in [1,2)$, for any $z \in \Bbb R^d$, $u \leq v$ and $s \in [0,u]$. By extracting a subsequence to obtain convergence $P$-a.s. and passing to the limit as $\varepsilon \to 0$, it holds that
\begin{equation*}
\frac{\eta^{t,z \cdot (B_{s + \varepsilon} -B_{\varepsilon})}_{sv}}{\varepsilon}=\frac{\eta^{t,z \cdot (B_{s + \varepsilon} -B_{\varepsilon})}_{su}}{\varepsilon} \longrightarrow \bar{g}^t_u(s,z), \quad \epsilon \to 0, \:P\mbox{-a.s.}
\end{equation*}
where the equality is due to restriction. The thesis then follows because
\begin{equation*}
\frac{\eta^{t,z \cdot (B_{s + \varepsilon} -B_{\varepsilon})}_{sv}}{\varepsilon} \longrightarrow \bar{g}^t_v(s,z), \quad \epsilon \to 0, \: P\mbox{-a.s.}
\end{equation*}
The converse follows immediately by Proposition~\ref{prop: norm-restrict-BSVIE-g}.
\end{proof}

\subsubsection{Time-consistency}

The following result provides a necessary and sufficient condition for a fully-dynamic risk measure induced by a family of BSVIEs to satisfy sub time-consistency. Note that the condition on the monotonicity of $g_{\cdot}(t,\cdot,\cdot)$ is the same as for a BSVIE with a single driver, while the condition on the monotonicity of the family $g_u$ is new.

\begin{proposition} \label{prop: sub-tc-bsvie-family}
Let $(\rho_{tu})_{t,u}$ be induced by a BSVIE with a family of drivers $(g_u)_{u \in [0,T]}$ as in~\eqref{eq: rho_BSVIEg-family}.

\noindent
a) $(\rho_{tu})_{t,u}$ satisfies sub time-consistency if and only if both the family $\mathcal{G}$ is increasing and
 $g_{\cdot}(t,\cdot,\cdot)$ is decreasing in $t$.

\noindent
b) $(\rho_{tu})_{t,u}$ satisfies time-consistency if and only if $\mathcal{G} \hspace{-1mm}= \hspace{-1mm} \{g \}$ and
 $g_{\cdot}(t,\cdot,\cdot)$ is constant in $t$.
\end{proposition}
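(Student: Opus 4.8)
The plan is to reduce both parts to the single-driver Volterra result (Theorem~\ref{prop: sub-tc-bsvie}) combined with the penalty characterisation of sub time-consistency (Proposition~\ref{prop: charact-sub time-cons}(iii)), treating separately the two sub-intervals on which the nested evaluation ``sees'' two distinct effects: the change of driver \emph{within the family} on $[s,t]$, and the Volterra dependence on the \emph{first} variable on $(t,u]$.

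For the sufficiency in (a), I would fix $s\le t\le u$ and $X\in L^2(\mathcal{F}_u)$ and expand $\rho_{st}(-\rho_{tu}(X))=\mathcal{E}^{g_t,V}(\mathcal{E}^{g_u,V}(-X\vert\mathcal{F}_t)\vert\mathcal{F}_s)$ exactly as in the proof of Theorem~\ref{prop: sub-tc-bsvie}: it solves a BSVIE on $[s,u]$ whose driver equals $g_t(s,v,\cdot)$ on $[s,t]$ and $g_u(t,v,\cdot)$ on $(t,u]$. On $[s,t]$ the increase of the family gives $g_t(s,v,\cdot)\le g_u(s,v,\cdot)$, while on $(t,u]$ the decrease of $g_u(\cdot,v,\cdot)$ in its first argument gives $g_u(t,v,\cdot)\le g_u(s,v,\cdot)$; hence the nested driver is pointwise dominated by $g_u(s,v,\cdot)$ on all of $[s,u]$. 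The BSVIE comparison theorem (Corollary~3.3 of Wang and Yong~\cite{wang-yong}) then yields $\rho_{st}(-\rho_{tu}(X))\le\mathcal{E}^{g_u,V}(-X\vert\mathcal{F}_s)=\rho_{su}(X)$, which is sub time-consistency.

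For the necessity in (a), I would apply Proposition~\ref{prop: charact-sub time-cons}(iii) and insert the family penalties~\eqref{eq: dual-repr-bsvie-family}, with $\bar{Q}=Q_s\cdot Q_t$ the pasting. Splitting $\alpha_{su}$ into its pieces on $[s,t]$ and $(t,u]$ and using $\bar{Q}\vert_{\mathcal{F}_t}=Q_s$ to rewrite the outer expectations, sub time-consistency becomes, for all $s\le t\le u$ and all dual processes $q_s$ on $[s,t]$ and $q_t$ on $(t,u]$,
\[
E_{Q_s}\Big[\int_s^t\big(g_u^*(s,v,q_s)-g_t^*(s,v,q_s)\big)\,dv\,\Big\vert\,\mathcal{F}_s\Big]
+E_{Q_s}\Big[E_{Q_t}\Big[\int_t^u\big(g_u^*(s,v,q_t)-g_u^*(t,v,q_t)\big)\,dv\,\Big\vert\,\mathcal{F}_t\Big]\Big\vert\,\mathcal{F}_s\Big]\le 0.
\]
Since $q_s$ and $q_t$ vary independently and the two time-integrals run over disjoint intervals, I would freeze the dual processes to constants on shrinking time sub-intervals (together with indicator sets in $\Omega$), in the spirit of Briand et al.~\cite{BCHMPeng} and Jiang~\cite{jiang}, to force each integrand to be nonpositive separately: $g_u^*(s,v,\cdot)\le g_t^*(s,v,\cdot)$ on $[s,t]$ and $g_u^*(s,v,\cdot)\le g_u^*(t,v,\cdot)$ on $(t,u]$. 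By the Fenchel--Moreau theorem these are equivalent to $g_t\le g_u$ on $[s,t]$ (the family is increasing) and to $g_u(t,v,\cdot)\le g_u(s,v,\cdot)$ for $s\le t$ (decrease in the first Volterra variable), i.e. the two asserted conditions.

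For part (b), I would use that strong time-consistency is equality in the recursion, equivalently (by $m$-stability of the pasting in the Brownian setting and the cocycle condition~\eqref{eq: cocycle}) equality in the penalty relation above for all dual processes; this forces \emph{both} integrands to vanish, so $g_t=g_u$ on $[s,t]$ for every $t\le u$ (whence $\mathcal{G}=\{g\}$) and $g_u(t,v,\cdot)=g_u(s,v,\cdot)$ (whence $g$ is constant in its first variable). Conversely, a single driver constant in its first variable turns the BSVIE into a BSDE, as observed after Theorem~\ref{prop: sub-tc-bsvie}, which is time-consistent by Theorem~\ref{prop: sub-tc-bsvie}(b). The main obstacle is the separation carried out in the necessity of (a): the single displayed inequality couples the two monotonicity requirements through the common outer measure $Q_s$, so the two conditions cannot be read off termwise without a localisation argument that genuinely decouples the $[s,t]$ and $(t,u]$ contributions; once this decoupling is established, the remaining steps follow from the comparison theorem and the Fenchel--Moreau duality already available.
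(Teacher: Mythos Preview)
Your proposal is correct and follows essentially the same approach as the paper: sufficiency via expanding the nested BSVIE, bounding the mixed driver above by $g_u(s,v,\cdot)$ using the two monotonicities, and comparing (the paper phrases the last step via uniqueness of the BSVIE solution rather than the Wang--Yong comparison theorem, but the content is the same), and necessity via the penalty inequality of Proposition~\ref{prop: charact-sub time-cons}(iii) split over $[s,t]$ and $(t,u]$, with part (b) obtained by turning the inequalities into equalities. The decoupling step you flag as the main obstacle is handled by the paper in exactly the informal way you anticipate: it simply asserts that since the displayed inequality must hold for all $s\le t\le u$ and all $Q_s\in\mathcal{Q}_{st}$, $Q_t\in\mathcal{Q}_{tu}$, each of the two integrands must separately have the required sign.
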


\noindent
\begin{proof}
a) Assume sub time-consistency holds.
By~\eqref{eq: dual-repr-bsvie-family}, the penalty term in the dual representation of $\rho_{tu}$ is given by
\begin{equation*}
\alpha^{\mathcal{G}}_{tu} (Q_t)= E_{Q_t} \left[\left. \int_t ^u g_u^*(t,v,q(t,v)) dv \right\vert \mathcal{F}_t \right]
\end{equation*}
for any $Q_t \in \mathcal{Q}_{tu}$.
Let $s, t, u \in [0,T]$ with $s \leq t \leq u$ and let $Q_s \in \mathcal{Q}_{st}, Q_t \in \mathcal{Q}_{tu}$ be fixed arbitrarily. Set now $\bar{Q}$ the pasting of $Q_s$ on $[s,t]$ and of $Q_t$ on $[t,u]$, hence $\bar{Q} \in \mathcal{Q}_{su}$. Denote by $q(s,v)$, $q(t,v)$ and $\bar{q}(s,v)$ the corresponding processes as in~\eqref{eq: definition-Qt}.
From the characterization of the penalty term for sub time-consistency in Proposition~\ref{prop: charact-sub time-cons}(iii) it follows that
\begin{eqnarray*}
\hspace{-4mm} &&{\displaystyle E_{\bar{Q}} \left[\left. \int_s ^u g_u^*(s,v,\bar{q}(s,v)) dv \right\vert \mathcal{F}_s \right] } \\
\hspace{-4mm} &&{\displaystyle \leq  E_{\bar{Q}} \left[\left. \int_s ^t g_t^*(s,v,\bar{q}(s,v)) dv \right\vert \mathcal{F}_s \right] + E_{\bar{Q}} \left[ \left. E_{\bar{Q}} \left[\left. \int_t ^u g_u^*(t,v,\bar{q}(t,v)) dv \right\vert \mathcal{F}_t \right] \right\vert \mathcal{F}_s \right]} ,
\end{eqnarray*}
hence
\begin{eqnarray}
\hspace{-2mm} {\displaystyle 0 \leq} && \hspace{-4mm} {\displaystyle E_{Q_s} \left[ \left. \int_s ^t \left[g_t^*(s,v,\bar{q}(s,v))- g_u^*(s,v,\bar{q}(s,v))\right] dv \right\vert \mathcal{F}_s \right]} \notag\\
\hspace{-2mm} && \hspace{-4mm} {\displaystyle +E_{Q_s} \left[\left. E_{Q_t} \left[ \left. \int_t ^u \left[g_u^*(t,v,\bar{q}(t,v))- g_u^*(s,v,\bar{q}(s,v))\right] dv \right\vert \mathcal{F}_t \right] \right\vert \mathcal{F}_s \right]}. \label{eq: condition g-family-bsvie}
\end{eqnarray}
Since~\eqref{eq: condition g-family-bsvie} should hold for any $s \leq t \leq u$ and any $Q_s \in \mathcal{Q}_{st}, Q_t \in \mathcal{Q}_{tu}$, it follows that
\begin{equation*}
\left\{
\begin{array}{rl}
g_t^*(s,v,\bar{q}) \geq  g_u^*(s,v,\bar{q}), &\quad \mbox{ for any } s \leq v \leq t \leq u \mbox { and } \bar{q} \in \Bbb R^d \\
g_u^*(t,v,\bar{q}) \geq  g_u^*(s,v,\bar{q}), &\quad \mbox{ for any } s \leq t \leq v \leq u \mbox { and } \bar{q} \in \Bbb R^d
\end{array}
\right.
\end{equation*}
Hence, $g_u$ is increasing in $u$ and $g_{\cdot}(t,\cdot,\cdot)$ is decreasing in $t$.
\smallskip

Conversely, sub time-consistency of $(\rho_{tu})_{t,u}$ induced by a family of BSVIEs can be written in the following notation
\begin{equation} 
\mathcal{E}^{g_t,V} (\mathcal{E}^{g_u,V} (-X \vert \mathcal{F}_t) \vert \mathcal{F}_s) \leq \mathcal{E}^{g_u,V} (-X \vert \mathcal{F}_s)
\end{equation}
for any $s \leq t \leq u$ and $X \in L^2(\mathcal{F}_u)$.
By~\eqref{eq: BSVIEg-family}, the right-hand and left-hand sides of the previous equation can be rewritten, respectively, as follows:
\begin{equation}  \label{eq: sub tc-proof-2-family}
\mathcal{E}^{g_u,V} (-X \vert \mathcal{F}_s)  = -X +\int_s ^u g_u(s,v,\hat{Z}(s,v)) \, dv - \int_s^u \hat{Z}(s,v) \, dB_v
\end{equation}
and
\begin{eqnarray}
\hspace{-5mm}&&\mathcal{E}^{g_t,V} (\mathcal{E}^{g_u,V} (-X \vert \mathcal{F}_t) \vert \mathcal{F}_s) \notag \\
\hspace{-5mm}&=& -X +\int_t ^u g_u(t,v,\tilde{Z}(t,v)) \, dv - \int_t^u \tilde{Z}(t,v) \, dB_v  \notag\\
\hspace{-5mm}&& +\int_s ^t g_t(s,v,Z(s,v)) \, dv -\int_s ^t  Z(s,v) \, dB_v \notag \\
\hspace{-5mm}&=& -X +\int_s ^u \left[ g_t(s,v,Z(s,v))1_{[s,t]}(v)+ g_u(t,v,\tilde{Z}(t,v))1_{(t,u]}(v) \right]\, dv \notag \\
\hspace{-5mm}&&- \int_s^u \left[ Z(t,v)1_{[s,t]}(v)+ \tilde{Z}(t,v) 1_{(t,u]}(v) \right] \, dB_v. \label{eq: sub tc-proof-bsvie-proof-1}
\end{eqnarray}
Furthermore,~\eqref{eq: sub tc-proof-bsvie-proof-1} becomes
\begin{eqnarray}
&\:&\mathcal{E}^{g_t,V} (\mathcal{E}^{g_u,V} (-X \vert \mathcal{F}_t) \vert \mathcal{F}_s) \notag \\
&\leq & -X +\int_s ^u \left[ g_t(s,v,Z(s,v))1_{[s,t]}(v)+ g_u(s,v,\tilde{Z}(t,v))1_{(t,u]}(v) \right]\, dv  \notag \\
&\:& - \int_s^u \left[ Z(t,v)1_{[s,t]}(v)+ \tilde{Z}(t,v) 1_{(t,u]}(v) \right] \, dB_v \notag\\
&\leq & -X +\int_s ^u \left[ g_u(s,v,Z(s,v))1_{[s,t]}(v)+ g_u(s,v,\tilde{Z}(t,v))1_{(t,u]}(v) \right]\, dv \notag \\
&& - \int_s^u \left[ Z(t,v)1_{[s,t]}(v)+ \tilde{Z}(t,v) 1_{(t,u]}(v) \right] \, dB_v, \label{eq: sub tc-proof-bsvie-family-3}
\end{eqnarray}
where the former inequality is due to decreasing monotonicity of $g_{\cdot}(t,\cdot,\cdot)$, the latter from increasing monotonicity of the family of drivers.
By setting
\begin{equation*}
\bar{Z}(s,v)=
Z(s,v) 1_{[s,t]}(v)+
\tilde{Z}(t,v) 1_{(t,u]}(v),
\end{equation*}
~\eqref{eq: sub tc-proof-bsvie-family-3} becomes
\begin{equation} \label{eq: sub tc-proof-bsvie-family-4}
\mathcal{E}^{g_t} (\mathcal{E}^{g_u} (-X \vert \mathcal{F}_t) \vert \mathcal{F}_s) \leq -X +\int_s ^u g_u(s,v,\bar{Z}(s,v)) \, dv - \int_s^u \bar{Z}(s,v) \, dB_v.
\end{equation}
Sub time-consistency then follows by comparing~\eqref{eq: sub tc-proof-bsvie-family-4} and~\eqref{eq: sub tc-proof-2-family} and
by the uniqueness of the solution of a BSVIE.\smallskip

\noindent
b)
The case of time-consistency can be obtained by replacing inequalities
with equalities in the proof above.
\end{proof}

\subsubsection{H-longevity and families of BSVIEs}

The following result provides sufficient conditions for h-longevity, similarly to Proposition~\ref{prop: longevity-BSDE-gt} for BSDEs.
\begin{proposition} \label{prop: longevity-BSVIE-family}
a) If $\mathcal{G}$ is an increasing family of drivers, $g_{\cdot}(t, \cdot, \cdot)$ is decreasing in $t$, and $\rho_{tu} (0) \geq 0$ for any $t \leq u$, then $(\rho_{tu})_{t,u}$ satisfies h-longevity.

\noindent
b) If $\mathcal{G}$ is an increasing family of drivers and $g_u \geq 0$ for any $u$, then h-longevity holds.
\end{proposition}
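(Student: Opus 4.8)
The plan is to treat the two parts separately: part (a) will follow immediately from earlier results, while part (b) requires a direct comparison of the underlying BSVIEs.

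For part (a), I would observe that the two structural hypotheses on the family---namely that $\mathcal{G}$ is increasing and that $g_{\cdot}(t,\cdot,\cdot)$ is decreasing in $t$---are precisely the conditions characterizing sub time-consistency in Proposition~\ref{prop: sub-tc-bsvie-family}(a). Hence $(\rho_{tu})_{t,u}$ is sub time-consistent. Combining sub time-consistency with the assumption $\rho_{tu}(0) \geq 0$ for all $t \leq u$, Remark~\ref{rem: tc and longevity} delivers h-longevity. This is essentially a one-line deduction requiring no new computation.

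For part (b), I would mimic the single-driver argument of Proposition~\ref{prop: longevity-BSVIE} together with the padding device used in the Horizon Comparison Theorem~\ref{prop: extension-comparison_BSDE}. Fix $s \leq t \leq u$ and $X \in L^2(\mathcal{F}_t)$, and write the two BSVIEs whose initial values are $\rho_{st}(X)$ (driver $g_t$ on $[s,t]$) and $\rho_{su}(X)$ (driver $g_u$ on $[s,u]$). Denoting by $Z^t(s,\cdot)$ the control of the shorter equation, I would extend it to $[s,u]$ by $\bar{Z}^t(s,v) = Z^t(s,v) 1_{[s,t]}(v)$ and introduce the extended driver $\tilde{g}(s,v,z) \triangleq g_t(s,v,z) 1_{[s,t]}(v)$. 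Since both the stochastic integral $\int_t^u \bar{Z}^t(s,v)\,dB_v$ and the Lebesgue integral $\int_t^u \tilde{g}(s,v,0)\,dv$ vanish, $\rho_{st}(X)$ coincides with the initial value of the BSVIE on $[s,u]$ with driver $\tilde{g}$ and terminal condition $-X$.

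The comparison then reduces to checking $\tilde{g}(s,v,z) \leq g_u(s,v,z)$ on all of $[s,u]$: for $v \in [s,t]$ this reads $g_t(s,v,z) \leq g_u(s,v,z)$, which holds because $\mathcal{G}$ is increasing and $t \leq u$; for $v \in (t,u]$ it reads $0 \leq g_u(s,v,z)$, which is exactly the hypothesis $g_u \geq 0$. As the two terminal conditions are equal, the Comparison Theorem for BSVIEs (Corollary 3.3 of Wang and Yong~\cite{wang-yong}) yields $\rho_{st}(X) \leq \rho_{su}(X)$, i.e.\ h-longevity. The only delicate point is the padding step---verifying that the extended pair genuinely solves the BSVIE on the longer interval and that $\tilde{g}$ remains admissible (adapted and Lipschitz in $z$) so that the comparison theorem is applicable; once this is in place, the pointwise driver inequality is routine.
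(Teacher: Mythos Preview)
Your argument for part (a) is exactly the paper's: sub time-consistency from Proposition~\ref{prop: sub-tc-bsvie-family}(a) combined with $\rho_{tu}(0)\geq 0$ and Remark~\ref{rem: tc and longevity}.

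For part (b) your proof is correct but follows a different route from the paper. The paper computes the difference $\delta\rho_s=\rho_{su}(X)-\rho_{st}(X)$ explicitly, uses $g_t\leq g_u$ and $g_t\geq 0$ to bound it from below by the linear expression $\int_s^u \Delta_z g_u(s,v)\cdot\widetilde{Z}(s,v)\,dv-\int_s^u\widetilde{Z}(s,v)\,dB_v$, and then applies Girsanov with the measure $\widetilde{Q}_s$ to conclude $\delta\rho_s\geq 0$; no comparison theorem is invoked. Your approach instead pads the shorter equation by the extended driver $\tilde g$ and reduces everything to a single application of the BSVIE (or, since $s$ is fixed, BSDE) comparison theorem with ordered drivers $\tilde g\leq g_u$. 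Your route is more conceptual and shorter, essentially transporting the Horizon Comparison Theorem~\ref{prop: extension-comparison_BSDE} to the Volterra setting in one stroke; the paper's route is more hands-on and has the side benefit of making the Girsanov structure explicit, which is useful if one also wants an expression for $\gamma(s,t,u,X)$ as in the single-driver case. Note that, since the evaluation index $s$ is fixed throughout, the comparison you need is really a BSDE comparison for the parameterized equation $\eta(\cdot;s)$, so the standard comparison theorem of~\cite{EKPQ} would suffice in place of Wang--Yong.
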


\noindent
\begin{proof}
a) follows by Remark~\ref{rem: tc and longevity} and Proposition~\ref{prop: sub-tc-bsvie-family}.
\smallskip
\noindent
b) Let $s \leq t \leq u$ and $X \in L^2(\mathcal{F}_t)$ be fixed arbitrarily.
The risk measures $\rho_{st}$ and $\rho_{su}$ satisfy, respectively, the following BSVIEs:
\begin{eqnarray*}
\rho_{st}(X)&=& -X+ \int_s^t g_t(s,v,Z^t(s,v)) dv - \int_s^t Z ^t (s,v) dB_v \\
\rho_{su}(X)&=& -X+ \int_s^u g_u(s,v,Z^u (s,v)) dv - \int_s^u Z ^u (s,v) dB_v.
\end{eqnarray*}
Set now
\begin{equation*}
\bar{Z}^{t,u} (s,v)=\left\{
\begin{array}{rl}
Z^t (s,v);& s \leq v \leq t \\
0;& t<v \leq u
\end{array}
\right. ;
\quad \widetilde{Z} (s,v)= Z^u (s,v) - \bar{Z}^{t,u} (s,v).
\end{equation*}
Then
\begin{eqnarray}
\delta \rho_s &=& \rho_{su}(X)-\rho_{st}(X) \notag\\
&=&  \int_s^u [g_u(s,v,Z^u (s,v))- g_t(s,v,\bar{Z}^{t,u}(s,v))] dv + \int_t^u g_t(s,v,0) dv \notag \\
&& - \int_s^u [Z ^u (s,v)- \bar{Z}^{t,u} (s,v)] dB_v  \notag \\
&\geq &  \int_s^u [g_u(s,v,Z^u (s,v))- g_u(s,v,\bar{Z}^{t,u}(s,v))] dv  - \int_s^u \widetilde{Z} (s,v) dB_v  \notag \\
&=&  \int_s^u \Delta_z g_u(s,v) \cdot \widetilde{Z} (s,v) dv - \int_s^u \widetilde{Z} (s,v) dB_v, \label{eq: bsvie-family-longevity-1}
\end{eqnarray}
where the inequality is due to the hypothesis on the drivers and where
\begin{equation*}
\Delta_z^i g_u(s,v) \triangleq \frac{g_u(s,v,Z^u(s,v))- g_u(s,v,\bar{Z}^{t,u}(s,v))}{d \left( Z^{u,i} (s,v)- \bar{Z}^{t,u, i} (s,v)\right)} 1_{\{Z^{u,i} (s,v)\neq \bar{Z}^{t,u,i} (s,v)\}}
\end{equation*}
for $i=1,...,d$. By Girsanov Theorem,~\eqref{eq: bsvie-family-longevity-1} becomes
\begin{eqnarray*}
\delta \rho_s &\geq & - \int_s^u \widetilde{Z}(s,v) dB^{\widetilde{Q}_s}_v,
\end{eqnarray*}
where $\frac{d \widetilde{Q}_s}{dP}\triangleq \exp \left\{ - \frac 12 \int_s^u \vert\Delta_z g_u(s,v)\vert^2 dv + \int_s^u \Delta_z g_u(s,v) dB_v \right\}$ and $B^{\widetilde{Q}_s}_v \triangleq B_v-B_s - \int_s^v \Delta_z g_u(s,v) dv$, for $v \geq s$, is a Brownian motion with respect to $\widetilde{Q}_s$ with initial value $B^{\widetilde{Q}_s}_s=0$. Hence, by taking the conditional expectation with respect to $\widetilde{Q}_s$,
\begin{equation*}
\delta \rho_s \geq  E_{\widetilde{Q}_s} \left[\left. - \int_s^u \widetilde{Z}(s,v) dB^{\widetilde{Q}_s}_v \right| \mathcal{F}_s\right]=0.
\end{equation*}
This completes the proof.
\end{proof}

In the spirit of Example~\ref{ex: BSDE g non normalized - entropic}, we can easily extend Example~\ref{ex: linear Volterra} and Example~\ref{ex: quadratic Volterra} to cover the case of families of BSVIEs.

\vspace{5mm}
\noindent
{\bf Acknowledgements.}
We thank Tomasz Bielecki, Matteo Burzoni, Igor Cialenco, Alessandro Doldi, Nicole El Karoui, Mario Ghossoub, Michael Kupper, Felix Liebrich, Max Nendel, and Frank Riedel for their interest and comments.
The research leading to these results has received funding from the Research Council of Norway (RCN) within the project {\it STORM - Stochastics for time-space risk models}  (nr. 274410). In particular, the second author thanks this research group for the warm hospitality during her visits. The second author is member of Gruppo Nazionale per l'Analisi Matematica, la Probabilit\`{a} e le loro Applicazioni (GNAMPA), Italy.

\end{document}